\theoremstyle{plain}        \newtheorem{thm}{Theorem}
\theoremstyle{plain}        \newtheorem{pro}[thm]{Proposition}
\theoremstyle{plain}        \newtheorem{lem}[thm]{Lemma}
\theoremstyle{plain}        \newtheorem{cor}[thm]{Corollary}
\theoremstyle{plain}        \newtheorem{rem}[thm]{Remark}
\theoremstyle{plain}        \newtheorem{eg}{Example}
\theoremstyle{plain}
\begin{document}
\title{Holomorphic isometries between products of complex unit balls}
\thanks{The third author is supported by National Science Foundation grant DMS-1412384 and Simons Foundation grant (\#429722 Yuan Yuan).}
\author{Shan Tai Chan, Ming Xiao \& Yuan Yuan}
\address{Department of Mathematics, Syracuse University, Syracuse, NY 13244-1150, USA.}
\email{schan08@syr.edu}
\address{Department of Mathematics, University of Illinois at Urbana-Champaign, IL, 61801, USA}
\email{mingxiao@illinois.edu}
\address{Department of Mathematics, Syracuse University, Syracuse, NY 13244-1150, USA.}
\email{yyuan05@syr.edu}
\maketitle

\begin{abstract}
We first give an exposition on holomorphic isometries from the Poincar\'e disk to polydisks and from the Poincar\'e disk to the product of the Poincar\'e disk with a complex unit ball. As an application, we provide an example of proper holomorphic map from the unit disk to the complex unit ball that is irrational, algebraic and holomorphic on a neighborhood of the closed unit disk. We also include some new results on holomorphic isometries.
\end{abstract}


\bigskip

Motivated by problems in Number Theory, Clozel-Ullmo \cite{CU03} studied commutators of modular correspondences on a quotient of any irreducible bounded symmetric domain by the torsion-free lattice of automorphisms of the irreducible bounded symmetric domain. In \cite{CU03}, Clozel-Ullmo showed that a germ of holomorphic isometry from the Poincar\'e disk $\Delta$ into polydisks $\Delta^p$, $p\ge 2$, is totally geodesic provided that the image of the isometry is invariant under certain automorphisms of the target polydisk and they
 further conjectured that the assumption of invariance of the image of such holomorphic isometries could be removed.
However, Mok \cite{Mok12} constructed a non-standard (i.e., not totally geodesic) holomorphic isometry, called the $p$-th root embedding, from the Poincar\'e disk into polydisks with respect to their Bergman metrics, which disproved the conjecture.  In general, holomorphic isometries $F: \Delta\rightarrow \Delta^p$ and $G: \Delta \rightarrow \Delta^q$ may induce a holomorphic isometry $H=G^{\#}\circ F: \Delta \rightarrow \Delta^{p+q-1}$, where $G^{\#}: \Delta^p \rightarrow \Delta^{p+q-1}$ is given by $G^{\#}(z_1, \cdots, z_p)=(z_1, \cdots, z_{p-1}, G(z_p))$. It is speculated that such construction from the $p$-th root embeddings and linear diagonal embeddings would exhaust all holomorphic isometries from the Poincar\'e disk to polydisks. The classification problem of holomorphic isometries from the Poincar\'e disk to polydisks has been intensively studied by Mok \cite{Mok12}, Ng \cite{Ng10} and Chan \cite{Ch16a, Ch16b} and we will give an expository of the recent development in section 1.

When the irreducible bounded symmetric domain is the complex unit ball of complex dimension at least 2, Clozel-Ullmo's problem is solved by Mok \cite{Mok02}. More precisely, Mok proved that if $n \geq 2$, any holomorphic isometry from the complex unit ball $\mathbb{B}^n$ to the product of $\mathbb{B}^n$ with respect to the Bergman metric is totally geodesic \cite{Mok02}. Mok's theorem is generalized by Ng \cite{Ng11} and Yuan-Zhang \cite{YZ12} from the complex unit ball of complex dimension at least 2 to the product of complex unit ball of possibly different dimensions. When the domain is of complex dimension 1, namely, from the Poincar\'e disk to the product of complex unit balls, the holomorphic isometry is recently studied by Chan-Yuan \cite{CY17}. A special case from the Poincar\'e disk to the product of the disk with a complex unit ball is understood in \cite{CY17}. However, the understanding of the general situation seems to be far from complete. We will give a survey on this problem in section 2. 

Proper holomorphic maps between complex unit balls is a vital subject in several complex variables and have deep connections to many other mathematical problems. The problem has been studied by many authors and the interested readers may refer to the literature such as \cite{Fo92},  \cite{D93}, \cite{HJ07}, \cite{HJY07}, \cite{NTY16} and references therein. One powerful and successful method is the Chern-Moser normal form in CR geometry. However, this method is not applicable for proper holomorphic map from the unit disk. Various phenomena for proper holomorphic maps from the unit disk to the complex unit ball were discussed by D'Angelo-Huo-Xiao in \cite{DHX16} and we provide a new example in section 3 motivated by their study, indicating as well the significant difference from the higher dimensional case.

We further provide some new results in section 4 on holomorphic isometries including a rigidity result between the products of complex unit balls and a classification result from the Poincar\'e disk to bidisks with conformal constants. Both results are motivated by the paper of Ng \cite{Ng10}. Ng obtained a classification theorem on holomorphic isometries between polydisks in \cite{Ng10} and we derive a higher dimensional analogy between products of complex unit balls. The holomorphic isometries from the Poincar\'e disk to bidisks are classified by Ng \cite{Ng10} and we establish a generalization for arbitrary conformal constants.

This article mostly concerns the particular case of holomorphic isometries between products of complex unit balls.  The interested readers may refer to \cite{Mok12}, \cite{Eb15},\cite{FHX16}, \cite{Yu16} for the general settings of holomorphic maps between Hermitian symmetric spaces.

Throughout this article, denote by $ds^2_U$ the Bergman metric of a bounded domain $U \subset \mathbb{C}^n$; denote by $g_\Omega$ the canonical K\"ahler-Einstein metric on an irreducible bounded symmetric domain $\Omega \subset \mathbb{C}^n$ normalized so that minimal disks are of constant Gaussian curvature $-2$ and by $\omega_\Omega$ the corresponding K\"ahler form.

Let $D\Subset \mathbb C^n$ and $\Omega\Subset \mathbb C^N$ be bounded symmetric domains.
We write $D=D_1\times\cdots \times D_m$ and $\Omega=\Omega_1\times\cdots \times \Omega_k$, where $D_j$, $1\le j\le m$, (resp. $\Omega_l$, $1\le l\le k$), are irreducible factors of $D$ (resp. $\Omega$).
Let $F_1=(F_{1,1}, \cdots, F_{1,k})$, $F_2=(F_{2,1},\ldots,F_{2,k}):(D_1,\lambda_1g_{D_1})\times\cdots\times (D_m,\lambda_m g_{D_m})
\to (\Omega_1,\lambda'_1g_{\Omega_1})\times\cdots \times (\Omega_k,\lambda'_kg_{\Omega_k})$ be holomorphic isometries for some positive real constants $\lambda_j$, $1\le j\le m$, and $\lambda'_l$, $1\le l\le k$,
in the sense that
\[ \bigoplus_{j=1}^m \lambda_j g_{D_j} = \sum_{j=1}^k \lambda'_j F_{l,j}^* g_{\Omega_j}\]
for $l=1,2$.
Then, $F_1$ and $F_2$ are said to be congruent to each other (or $F_1$ equals $F_2$ up to reparametrizations) if there exist $\phi\in \mathrm{Aut}(D)$ and $\Psi\in \mathrm{Aut}(\Omega)$ such that $F_1= \Psi\circ F_2 \circ \phi$; otherwise, $F_1$ and $F_2$ are said to be incongruent to each other.
This defines an equivalence class of holomorphic maps from $(D_1,\lambda_1g_{D_1})\times\cdots\times (D_m,\lambda_m g_{D_m})$ to $(\Omega_1,\lambda'_1g_{\Omega_1})\times\cdots \times (\Omega_k,\lambda'_kg_{\Omega_k})$.

\smallskip

\section{Holomorphic isometries from the Poincar\'e disk into polydisks}
Let $\mathcal H=\{\tau\in \mathbb C\mid \mathrm{Im}\tau>0\}$ be the upper half-plane. 
Mok \cite{Mok12} defined a map $\rho_p:\mathcal H\to \mathcal H^p$ by
\[ \rho_p(\tau)=(\tau^{1\over p},\gamma \tau^{1\over p},\ldots, \gamma^{p-1} \tau^{1\over p}) \]
and showed that $\rho_p:(\mathcal H,ds_{\mathcal H}^2)\to (\mathcal H^p, ds_{\mathcal H^p}^2)$ is a non-standard holomorphic isometry, where $\gamma:=e^{\pi i \over p}$ and $\tau^{1\over p}=r^{1\over p} e^{i\theta\over p}$ for $\tau=r e^{i\theta}\in \mathcal H$.
By composing with Cayley transform from $\Delta$ to $\mathcal H$ (resp. from $\mathcal H^p$ to $\Delta^p$), we obtain a holomorphic isometry $F_p: (\Delta,ds_{\Delta}^2)\to (\Delta^p, ds_{\Delta^p}^2)$, which is called the $p$-th root embedding.
In \cite{Ng10}, we also have the diagonal embedding $\varpi_p:(\Delta,pds_{\Delta}^2)\to (\Delta^p, ds_{\Delta^p}^2)$ given by $\varpi_p(z)=(z,\ldots,z)$ for $p\ge 2$.

Motivated by Mok's fundamental example, Ng \cite{Ng10} studied holomorphic isometries from the Poincar\'e disk into polydisks with respect to their Bergman metrics up to normalizing constants in a systematic way.
Let $f=(f^1,\ldots,f^p):(\Delta,\lambda ds_\Delta^2)\to (\Delta^p,ds_{\Delta^p}^2)$ be a holomorphic isometry such that all component functions of $f$ are non-constant functions, where $\lambda>0$ is real constant and $p\ge 2$ is an integer.
In \cite{Ng10}, Ng showed that $\lambda$ is an integer such that $1\le \lambda \le p$ and introduced the sheeting numbers to $f$ and its component functions.
More precisely, Ng \cite{Ng10} showed that there is an irreducible projective-algebraic curve $V\subset \mathbb P^1 \times (\mathbb P^1)^p$ extending the graph $\mathrm{Graph}(f)\subset \Delta \times \Delta^p$ and there are irreducible projective-algebraic curves $V_j\subset \mathbb P^1 \times \mathbb P^1$ extending the graph $\mathrm{Graph}(f^j)\subset \Delta \times \Delta$ for $j=1,\ldots,p$. Moreover, Ng \cite{Ng10} constructed finite branched coverings $\pi: V\to \mathbb P^1$, $(w,z_1,\ldots,z_p)\mapsto w$, and 
$\pi_j:V_j\to \mathbb P^1$, $(w,z_j)\mapsto w$, in terms of the inhomogeneous coordinates for $j=1,\ldots,p$.
Let $P_j: V \to \mathbb P^1 \times \mathbb P^1$ be the canonical projection given by $P_j(w,z_1,\ldots,z_p) = (w,z_j)$ in terms of the inhomogeneous coordinates for $j=1,\ldots,p$.
In \cite{Ng10}, Ng defined $V_j:=P_j(V)$ and obtained $\pi=\pi_j\circ P_j$ for $j=1,\ldots,p$.

We now follow some notations and settings in \cite{Mok11}.
One may define the normalization $\nu_j:X_j\to V_j$ for $j=1,\ldots,p$ and the normalization $\nu:X\to V$, where $X$ and $X_j$, $j=1,\ldots,p$, are non-singular models of $V$ and $V_j$, $j=1,\ldots,p$, respectively.
Then, we obtain an $n$-sheeted branched covering $\tau:=\pi\circ\nu:X\to \mathbb P^1$ and a $s_j$-sheeted branched covering $\tau_j:=\pi_j\circ \nu_j:X_j\to \mathbb P^1$ for $j=1,\ldots,p$.
We call $n$ the global sheeting number of $f$ and $s_j$ the sheeting number of $f^j$ for $j=1,\ldots,p$. We may introduce the space ${\bf HI}(\Delta,\Delta^p)$ of all holomorphic isometries from $(\Delta, k ds_\Delta^2)$ to $(\Delta^p,ds_{\Delta^p}^2)$ for some positive integer $k$, $1\le k\le p$, and the space ${\bf HI}_k(\Delta,\Delta^p)$ of all holomorphic isometries from $(\Delta, k ds_\Delta^2)$ to $(\Delta^p,ds_{\Delta^p}^2)$. Similarly, we let ${\bf HI}_k(\Delta,\Delta^p;n)$ be the space of all holomorphic isometries from $(\Delta, k ds_\Delta^2)$ to $(\Delta^p,ds_{\Delta^p}^2)$ with the global sheeting number equal to $n$.
Given positive integers $n,s_1,\ldots,s_p$, we denote by ${\bf HI}_k(\Delta,\Delta^p;n;s_1,\ldots,s_p)$ the space of all holomorphic isometries $f=(f^1,\ldots,f^p)$ from $(\Delta, k ds_\Delta^2)$ to $(\Delta^p,ds_{\Delta^p}^2)$ such that the global sheeting number of $f$ is equal to $n$ and the sheeting number of $f^j$ is equal to $s_j$ for $j=1,\ldots,p$. 
In \cite{Ng10}, Ng proved that ${p\over k} \le n\le 2^{p-1}$, $\sum_{j=1}^p {1\over s_j} = k$ and $s_j|n$ for $j=1,\ldots,p$.
Thus, the study is reduced to the study of all such spaces ${\bf HI}_k(\Delta,\Delta^p;n;s_1,\ldots,s_p)$ for integers $p,k,n,s_1,\ldots,s_p$ satisfying $p\ge 2$, $1\le k\le p$, and the identities proved by Ng \cite{Ng10} in the above.
In \cite{Ch16c}, Chan showed that some spaces ${\bf HI}_k(\Delta,\Delta^p;n;s_1,\ldots,s_p)$ could be empty under the settings. Thus, one of the difficulties in the study of ${\bf HI}(\Delta,\Delta^p)$ for general $p\ge 2$ is that in general we do not even know the existence of holomorphic isometries with some prescribed sheeting numbers.

In this direction, we have the following rigidity result for the $p$-th root embedding, which is the minimal case when we consider the global sheeting number and the normalizing constant $k$ equals one.

\begin{thm}[cf. Ng \cite{Ng10} for $p=2$ and $p\ge 3$ being odd; Chan \cite{Ch16a} for any $p\ge 2$]\label{GR_pth root}
Let $p\ge 2$ be an integer.
If $f:(\Delta,ds_\Delta^2)\to(\Delta^p,ds_{\Delta^p}^2)$ is a holomorphic isometric embedding with the global sheeting number $n$ equal to $p$, then $f$ is the $p$-th root embedding $F_p$ up to reparametrizations, i.e., up to automorphisms of $\Delta$ and $\Delta^p$.
\end{thm}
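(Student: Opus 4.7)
The plan is to use Ng's numerical constraints recorded in the excerpt to pin down all the sheeting numbers, identify the algebraic structure of $f$ as a Kummer-type extension, and then collapse it to $F_p$ by normalization.

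First I would apply Ng's two identities with $k=1$: $\sum_{j=1}^p 1/s_j = 1$ and $s_j \mid n = p$. Divisibility forces $s_j \le p$, hence $1/s_j \ge 1/p$, so $\sum_{j=1}^p 1/s_j \ge 1$ with equality precisely when every $s_j$ equals $p$. Since the sum is $1$, we conclude $s_1 = \cdots = s_p = p$, so each component $f^j$ is a degree-$p$ algebraic function of $z$ and each $\pi_j : X_j \to \mathbb{P}^1$ is a $p$-sheeted branched cover.

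Next, the relation $\pi = \pi_j \circ P_j$ combined with $\deg \pi = n = p = s_j = \deg \pi_j$ gives $\deg P_j = 1$ for each $j$, so every canonical projection $P_j : V \to V_j$ is birational. Hence $V$ is birational to every $V_j$, and all of the $f^j$ lie in a common degree-$p$ extension $L = \mathbb{C}(z)(f^1) = \cdots = \mathbb{C}(z)(f^p)$ of $\mathbb{C}(z)$. Equating K\"ahler potentials modulo pluriharmonic terms yields the functional equation $(1-|z|^2) = |h(z)|^2 \prod_{j=1}^p (1-|f^j(z)|^2)$ for some holomorphic $h$, which governs the boundary behavior of the $f^j$. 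I would use this together with the monodromy action of $\pi : V \to \mathbb{P}^1$ on the $p$ sheets to show that $L/\mathbb{C}(z)$ is Galois with cyclic Galois group of order $p$; Kummer theory then gives $L = \mathbb{C}(z)\bigl(\alpha(z)^{1/p}\bigr)$ for some rational $\alpha$, and the $f^j$ are of the form $\gamma^{j-1} \alpha(z)^{1/p}$ up to rational prefactors.

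Finally, I would exploit the reparametrization group: after passing to the half-plane via the Cayley transform and composing with suitable elements of $\mathrm{Aut}(\mathcal H)$ and $\mathrm{Aut}(\mathcal H^p)$ (the latter including coordinate permutations), one normalizes $\alpha$ to match the identity function on $\mathcal H$ and the prefactors to $1$, producing $f = F_p$ up to reparametrization. The main obstacle is the Galois step: a priori the monodromy of a degree-$p$ branched cover is merely a transitive subgroup of $S_p$ and need not be cyclic when $p$ is composite (for example, $p=4$ admits a Klein four-group realization). To extract cyclicity one must analyze the ramification of the $f^j$ on $\partial\Delta$ very precisely using the above functional equation and the constraint that $f^j$ lands in $\Delta$; this is presumably where Chan's refinement over Ng's original argument enters, since the difficulty does not arise when the Galois-theoretic lattice above degree $p$ is constrained by arithmetic peculiarities of $p$.
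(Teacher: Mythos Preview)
Your opening reduction is correct: with $k=1$ and $n=p$, Ng's constraints $\sum_j 1/s_j=1$ and $s_j\mid p$ force $s_1=\cdots=s_p=p$, and then $\deg P_j=1$ shows the $f^j$ all lie in a single degree-$p$ extension $L/\mathbb{C}(z)$. The Galois--Kummer framing is a reasonable way to organize what remains, and you are right that the entire difficulty for composite $p$ is the cyclicity of the monodromy. But your proposal does not close that gap; you only locate it. The paper's account of Chan's argument does not pass through abstract Galois theory at all: it works directly with the \emph{polarized} functional equation $\prod_{j=1}^p\bigl(1-f^j(z)\overline{f^j(w)}\bigr)=1-z\bar w$ (after normalizing $f(0)={\bf 0}$, so your factor $h$ is absent) together with the Puiseux expansions of the $f^j$ at the branch points on $\partial\Delta$. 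The isometry condition forces very specific local behaviour of the branches near each such point---the Puiseux exponents and the way the sheets permute under local monodromy are constrained by the requirement that the left side match the simple vanishing of $1-z\bar w$ along the diagonal of $\partial\Delta\times\partial\Delta$. This boundary/Puiseux analysis is what pins down the structure of the $p$-sheeted cover $\pi:V\to\mathbb{P}^1$ directly.

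There is a second loose end in your sketch that is not merely cosmetic. Even granting $L=\mathbb{C}(z)\bigl(\alpha^{1/p}\bigr)$ cyclic, it does \emph{not} follow that each $f^j$ is a rational prefactor times $\gamma^{j-1}\alpha^{1/p}$: an arbitrary generator of a Kummer extension need not be a rational multiple of the radical. What you actually need is that the $f^j$ are Galois conjugates \emph{of each other}, i.e., that the monodromy permutes the component functions themselves and not merely the abstract sheets of $L$. This again comes from the polarized functional equation---analytic continuation around a branch point must preserve the symmetric product $\prod_j(1-f^j(z)\overline{f^j(w)})$, hence permutes the factors---and showing that at some branch point this permutation is a full $p$-cycle is exactly the Puiseux/boundary step the paper attributes to Chan. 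Without carrying out that analysis, the Kummer conclusion and the subsequent normalization to $F_p$ are both unsupported.
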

\begin{rem}
In the sense of Mok \cite{Mok11}, this theorem shows that the $p$-th root embedding $F_p$ is globally rigid in the space ${\bf HI}_1(\Delta,\Delta^p;p)$.
\end{rem}

To prove Theorem \ref{GR_pth root}, Chan \cite{Ch16a} made use of the polarized functional equations for any holomorphic isometry $f\in {\bf HI}_1(\Delta,\Delta^p;p)$ with $f(0)={\bf 0}$ and certain special boundary behaviour of $f$ around the branch points located on the unit circle $\partial\Delta$ together with the Puiseux series expansion around each of these branch points. This yields some restriction on the structure of the $p$-sheeted branched covering $\pi:V\to \mathbb P^1$.

\medskip
For the maximal case when considering the global sheeting number, we have the following classification result by Ng \cite{Ng10}.

\begin{thm}[cf. Theorem 6.8 in Ng \cite{Ng10}]\label{Thm_MaxGSN}
Let $F\in {\bf HI}_1(\Delta,\Delta^p;n;s_1,\ldots,s_p)$ be a holomorphic isometry.
If $\max\{s_j\mid 1\le j\le p\} = 2^{p-1}$, then $F$ can be factorized as 
$F=G_{p-1}\circ G_{p-2}\circ \cdots \circ G_1$, where $G_j:\Delta^j\to \Delta^{j+1}$ is given by $G_j(z_1,\ldots,z_j)=(z_1,\ldots,z_{j-1},\alpha_j(z_j),\beta_j(z_j))$ for $j=2,\ldots,p-1$ and $G_1(z_1)=(\alpha_1(z_1),\beta_1(z_1))$ for some $(\alpha_j,\beta_j)\in {\bf HI}_1(\Delta,\Delta^2)$, $j=1,\ldots,p-1$. 
\end{thm}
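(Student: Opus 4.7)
The plan is to prove the theorem by induction on $p$. The first step is to pin down the multiset of sheeting numbers from Ng's constraints $n\le 2^{p-1}$, $s_j\mid n$, $\sum_{j=1}^{p} 1/s_j = 1$, together with the hypothesis $\max_j s_j = 2^{p-1}$. These force $n=2^{p-1}$ and each $s_j$ to be a power of $2$; writing $s_j = 2^{a_j}$, the identity $\sum_{j=1}^{p} 2^{-a_j}=1$ with $a_j\in\{1,\ldots,p-1\}$ is a Kraft equality for a binary tree with $p$ leaves of depth at most $p-1$. A short induction shows that the only such tree is the caterpillar with one leaf at each depth $1,2,\ldots,p-2$ and two leaves at depth $p-1$; hence $(s_1,\ldots,s_p)=(2,4,\ldots,2^{p-2},2^{p-1},2^{p-1})$ up to permutation.

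The base case $p=2$ is tautological. For the inductive step, relabel so that $s_1=2$, the unique minimal sheeting number. The goal is to construct $G_1=(\alpha_1,\beta_1)\in\mathbf{HI}_1(\Delta,\Delta^2)$ with $\alpha_1=f^1$ together with a holomorphic isometry $\tilde F\colon\Delta\to\Delta^{p-1}$ so that $F(w)=(\alpha_1(w),\tilde F(\beta_1(w)))$. Polarizing $\prod_{j=1}^{p}(1-f^j(w)\overline{f^j(z)})=1-w\bar z$ shows that such a $\beta_1$ exists iff
\[
\frac{1-w\bar z}{1-f^1(w)\overline{f^1(z)}} \;=\; 1-\beta_1(w)\overline{\beta_1(z)},
\]
i.e., iff the quotient Hermitian kernel on the left has rank one. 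Granted $\beta_1$, the factorization of $(f^2,\ldots,f^p)$ through $\beta_1$ follows from the equality case of the Cauchy--Schwarz / Schwarz--Pick estimate $|1-a\bar b|^2\ge (1-|a|^2)(1-|b|^2)$: evaluating the polarized identity at two distinct preimages $w_1,w_2$ of a common value of $\beta_1$ and using the individual isometry identities for $(f^1,\beta_1)$ at $w_1$ and $w_2$ yields
\[
\prod_{j=2}^{p}\bigl|1-f^j(w_1)\overline{f^j(w_2)}\bigr|^2 = \prod_{j=2}^{p}\bigl(1-|f^j(w_1)|^2\bigr)\bigl(1-|f^j(w_2)|^2\bigr),
\]
which by the equality case forces $f^j(w_1)=f^j(w_2)$ for every $j\ge 2$. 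Consequently $(f^2,\ldots,f^p)=\tilde F\circ\beta_1$ for a well-defined holomorphic $\tilde F\colon\Delta\to\Delta^{p-1}$, and a second polarization, after substituting $u=\beta_1(w)$, $v=\beta_1(z)$, identifies $\tilde F$ as a member of $\mathbf{HI}_1(\Delta,\Delta^{p-1};2^{p-2};2,4,\ldots,2^{p-2},2^{p-2})$ whose maximum sheeting number is $2^{p-2}$. The inductive hypothesis then produces $\tilde F=\tilde G_{p-2}\circ\cdots\circ\tilde G_1$, and setting $G_{j+1}(z_1,\ldots,z_{j+1}):=(z_1,\tilde G_j(z_2,\ldots,z_{j+1}))$ closes the factorization $F=G_{p-1}\circ\cdots\circ G_1$.

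The main obstacle is the existence of $\beta_1$, i.e., the rank-one factorization of the quotient kernel. Theorem~\ref{GR_pth root} (for $p=2$) combined with Step~1 forces any such $\beta_1$, if it exists, to make $(f^1,\beta_1)$ congruent to the square-root embedding $F_2$; equivalently, one must show that $f^1$ itself is congruent to the first component of $F_2$. The strategy is to exploit the hypothesis $\max_j s_j=2^{p-1}$ through the branching structure of the $n$-sheeted branched covering $\pi\colon V\to\mathbb{P}^1$ along $\partial\Delta$: the coexistence of a factor of sheeting $2^{p-1}$ and a factor of sheeting exactly $2$ in the multiset produced in Step~1, together with Puiseux-series analysis of $F$ at the branch points of $\pi$ (in the spirit of the proof of Theorem~\ref{GR_pth root}), is used to pin the ramification of $\pi_1\colon V_1\to\mathbb{P}^1$ to that of a square-root map. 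This forces $f^1$ into the $F_2$-model, from which $\beta_1$ is read off and the rank-one factorization of the quotient kernel is verified by direct computation.
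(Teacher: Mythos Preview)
First, note that the paper itself does not prove this theorem: it is stated with the attribution ``cf.\ Theorem~6.8 in Ng~\cite{Ng10}'' and no argument is supplied, so there is no in-paper proof to compare against. What follows assesses your proposal on its own merits.

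Your Step~1 is correct: the constraints $n\le 2^{p-1}$, $s_j\mid n$, $\sum_j 1/s_j=1$ and $\max_j s_j=2^{p-1}$ force $n=2^{p-1}$, each $s_j$ a power of $2$, and the Kraft-equality/caterpillar argument pins the multiset to $(2,4,\ldots,2^{p-2},2^{p-1},2^{p-1})$. \emph{Granted} $\beta_1$, your Schwarz--Pick equality computation and the check that $\tilde F$ again has maximal sheeting number $2^{(p-1)-1}$ are also fine. One small point you gloss over: any component of $F_2$ is injective on $\Delta$ but not surjective, so $\tilde F$ is a priori defined only on the proper open subset $\beta_1(\Delta)\subsetneq\Delta$; you need an extension result (Mok's or Calabi's) to promote the germ of isometry to a global $\tilde F\in{\bf HI}_1(\Delta,\Delta^{p-1})$.

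The genuine gap is exactly where you flag it, and your final paragraph does not close it. You must produce $\beta_1$ with $(f^1,\beta_1)\in{\bf HI}_1(\Delta,\Delta^2)$, and you propose to do so by showing $f^1$ is congruent to a component of $F_2$ via ``Puiseux-series analysis\ldots pin[ning] the ramification of $\pi_1$''. But $s_1=2$ only tells you that $\pi_1:V_1\to\mathbb P^1$ is two-sheeted; there are many two-sheeted irreducible curves in $\mathbb P^1\times\mathbb P^1$, and nothing you have written singles out the one arising from $F_2$. You have not located the branch points of $\pi_1$ on $\partial\Delta$, computed any Puiseux expansion, or explained concretely how the presence of a component of sheeting $2^{p-1}$ constrains the component of sheeting $2$. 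This is the crux of the theorem---it is precisely where Ng's structural analysis of the branches of a component and their boundary behaviour along $\partial\Delta$ does the work---and without it the inductive step does not close.
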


\subsection{The complete classification of holomorphic isometries from the Poincar\'e disk into the $p$-disk for $p$ at most four}
In \cite{Mok11}, Mok raised a question that whether all holomorphic isometries in ${\bf HI}(\Delta,\Delta^p)$ are obtained from the $q$-th root embeddings $F_q$, diagonal embeddings $\varpi_r$, and automorphisms of the unit disk $\Delta$ and the $p$-disk $\Delta^p$ for any $p\ge 2$ by the constructions mentioned at the beginning of the present article.
We call this question the classification problem of holomorphic isometries in ${\bf HI}(\Delta,\Delta^p)$.
In this direction, Ng \cite{Ng10} obtained the following classification result for all holomorphic isometries in ${\bf HI}(\Delta,\Delta^p)$ when $p=2,3$ or the normalizing constant $k$ equals $p$.
\begin{thm}[cf. Ng \cite{Ng10}] \label{Thm_Class_2or3}
Let $f\in {\bf HI}_k(\Delta,\Delta^p)$ be a holomorphic isometry such that all component functions of $f$ are non-constant functions.
Then, we have the following:
\begin{enumerate}
\item If $p=2$ and $k=1$, then $f$ is the square root embedding $F_2$ up to reparametrizations.
\item If $p=3$ and $k=1$, then up to reparametrizations, either $f$ is the cube root embedding $F_3$ or $f=(\alpha_1,\alpha_2\circ \beta_1,\beta_2\circ \beta_1)$ for some $(\alpha_j,\beta_j)\in {\bf HI}_1(\Delta,\Delta^2)$, $j=1,2$.
\item If $p=3$ and $k=2$, then $f(z)=(z,\alpha(z),\beta(z))$ up to reparametrizations, where $(\alpha,\beta)\in {\bf HI}_1(\Delta,\Delta^2)$.
\item If $k=p$, then $f(z)=(z,\ldots,z)$ is the diagonal embedding $\varpi_p$ up to reparametrizations.
\end{enumerate}
\end{thm}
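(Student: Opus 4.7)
The plan is to leverage Ng's numerical constraints $p/k \le n \le 2^{p-1}$, $\sum_{j=1}^p 1/s_j = k$, and $s_j \mid n$, which reduce each case to a short arithmetic enumeration that is then finished by invoking Theorem \ref{GR_pth root} or Theorem \ref{Thm_MaxGSN} as a black box.

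For part (4), $k=p$ together with $1/s_j \le 1$ forces each $s_j = 1$, so every component $f^j$ is a degree-one branched covering and hence extends to an element of $\mathrm{Aut}(\Delta)$. Precomposing with $(f^1)^{-1} \in \mathrm{Aut}(\Delta)$ and then postcomposing with a suitable product automorphism of $\Delta^p$ sends $f$ to the diagonal embedding $\varpi_p$. For part (1), the inequalities force $n = 2 = p$, and Theorem \ref{GR_pth root} yields $f = F_2$ up to reparametrizations. For part (3), the identity $1/s_1 + 1/s_2 + 1/s_3 = 2$ in positive integers forces exactly one $s_j$ to equal $1$ (all $s_j \geq 2$ gives sum at most $3/2$; two $s_j = 1$ forces the third reciprocal to vanish), after which the remaining pair must be $(2,2)$. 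Relabeling so that $s_1 = 1$, we have $f^1 \in \mathrm{Aut}(\Delta)$; precomposing with $(f^1)^{-1}$ gives $f(z) = (z, \alpha(z), \beta(z))$, and the isometry identity $2\,ds_\Delta^2 = ds_\Delta^2 + \alpha^* ds_\Delta^2 + \beta^* ds_\Delta^2$ immediately implies $(\alpha, \beta) \in {\bf HI}_1(\Delta, \Delta^2)$.

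Part (2) is the most involved. Here $n \in \{3, 4\}$, and the positive-integer solutions of $\sum_{j=1}^3 1/s_j = 1$ are, up to order, $(3,3,3)$, $(2,3,6)$, and $(2,4,4)$. The divisibility $s_j \mid n \le 4$ eliminates $(2,3,6)$, since it would force $6 \mid n$. The partition $(3,3,3)$ forces $n = 3 = p$, so Theorem \ref{GR_pth root} gives $f = F_3$ up to reparametrizations. The partition $(2,4,4)$ forces $n = 4$ and $\max s_j = 4 = 2^{p-1}$, so Theorem \ref{Thm_MaxGSN} provides the factorization $f = G_2 \circ G_1$, which unwinds to $f = (\alpha_1, \alpha_2 \circ \beta_1, \beta_2 \circ \beta_1)$ with $(\alpha_j, \beta_j) \in {\bf HI}_1(\Delta, \Delta^2)$ for $j = 1, 2$.

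The principal subtlety beyond bookkeeping is the exhaustive enumeration of unit-fraction decompositions together with the coherent choice of product reparametrizations in parts (3) and (4); the deeper analytic content resides entirely in Theorems \ref{GR_pth root} and \ref{Thm_MaxGSN}, which are imported wholesale, so no further input is required here.
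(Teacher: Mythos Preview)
The paper does not supply its own proof of Theorem~\ref{Thm_Class_2or3}; the result is quoted from Ng~\cite{Ng10} as background, so there is no in-paper argument to compare against. Your proposal is essentially the argument one extracts from Ng's framework as summarized here: enumerate the admissible sheeting-number data using $\sum_j 1/s_j = k$ and $s_j\mid n$ with $p/k\le n\le 2^{p-1}$, and then dispatch each surviving case with Theorem~\ref{GR_pth root} or Theorem~\ref{Thm_MaxGSN}. The arithmetic in parts~(1)--(3) is correct, and the invocation of Theorem~\ref{Thm_MaxGSN} in the $(2,4,4)$ case of part~(2) is exactly right.

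One point deserves a sentence of justification. You assert that $s_j=1$ forces $f^j\in\mathrm{Aut}(\Delta)$, but $s_j$ is the degree of the projection $\pi_j:V_j\to\mathbb P^1$ onto the \emph{domain} factor; $s_j=1$ a priori only says that $f^j$ extends to a rational function on $\mathbb P^1$, not that this rational function has degree one. The missing step is either the symmetry of the curve $V_j$ under the reflection coming from the polarized functional equation (which forces the bidegree of $V_j$ in $\mathbb P^1\times\mathbb P^1$ to be $(s_j,s_j)$, as in Ng~\cite{Ng10}), or---much more simply in part~(4)---the Schwarz--Pick lemma: each $f^j:\Delta\to\Delta$ satisfies $(f^j)^*g_\Delta\le g_\Delta$, and summing against the isometry identity $\sum_j (f^j)^*g_\Delta = p\,g_\Delta$ forces equality in every term, hence each $f^j\in\mathrm{Aut}(\Delta)$. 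The same device, applied after peeling off the $s_1=1$ component in part~(3), closes that case as well. With this clarification your argument is complete.
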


Motivated by the classifcation problem raised by Mok \cite{Mok11} and the classification results by Ng \cite{Ng10}, Chan \cite{Ch16b} classified all holomorphic isometries in ${\bf HI}(\Delta,\Delta^4)$ as follows:
\begin{thm}[cf. Chan \cite{Ch16b}]\label{Thm_Class_4}
Let $f\in {\bf HI}_k(\Delta,\Delta^4)$ be a holomorphic isometry such that all component functions of $f$ are non-constant functions.
\begin{enumerate}
\item
If $k=1$, then $f$ is one of the following up to reparametrizations:
\begin{enumerate}
\item the $4$-th root embedding $F_4:\Delta\to \Delta^4$,
\item $\left(\alpha_1,\,\alpha_2 \circ \beta_1,\, \alpha_3\circ(\beta_2\circ\beta_1),\,\beta_3\circ(\beta_2\circ\beta_1)\right)$,
where $(\alpha_j,\beta_j)$ $\in$ ${\bf HI}_1(\Delta,\Delta^2;2)$ for $j=1,2,3$,
\item $(\alpha_1,h^2\circ \alpha_2,h^3\circ \alpha_2,h^4\circ \alpha_2)$, where $(\alpha_1,\alpha_2) \in {\bf HI}_1(\Delta,\Delta^2;2)$ and
$(h^2,h^3,h^4)\in {\bf HI}_1(\Delta,\Delta^3;3)$,
\item $(\beta_1,\alpha_1 \circ \beta_2, \alpha_2\circ \beta_2, \beta_3)$, where $(\beta_1,\beta_2,\beta_3)\in {\bf HI}_1(\Delta,\Delta^3;3)$ and $(\alpha_1,\alpha_2)\in {\bf HI}_1(\Delta,\Delta^2;2)$,
\item $(\alpha_1\circ \alpha_2,\beta_1\circ \alpha_2,\alpha_3\circ \beta_2,\beta_3\circ \beta_2)$, where $(\alpha_j,\beta_j)\in {\bf HI}_1(\Delta,\Delta^2;2)$ for $j=1,2,3$.
\end{enumerate}
\item
If $k=2$, then $f(z)$ is one of the following up to reparametrizations:
\begin{enumerate}
\item $(\alpha_1(z),\beta_1(z),\alpha_2(z),\beta_2(z))$, where $(\alpha_j,\beta_j)\in{\bf HI}_1(\Delta,\Delta^2;2)$ for $j=1,2$.
\item $(z,\alpha_1(z),(\alpha_2\circ \beta_1)(z),(\beta_2\circ \beta_1)(z))$, where $(\alpha_j,\beta_j)\in{\bf HI}_1(\Delta,\Delta^2;2)$ for $j=1,2$.
\item $(z,\alpha_1(z),\alpha_2(z),\alpha_3(z))$,
where $(\alpha_1,\alpha_2,\alpha_3)\in {\bf HI}_1(\Delta,\Delta^3;3)$.
\end{enumerate}
\item If $k=3$, then 
\[ f(z)=(z,z,\alpha(z),\beta(z)) \]
up to reparametrizations, where $(\alpha,\beta)\in {\bf HI}_1(\Delta,\Delta^2;2)$.
\item If $k=4$, then $f(z)=(z,z,z,z)$ is the diagonal embedding $\varpi_4$ up to reparametrizations.
\end{enumerate}
\end{thm}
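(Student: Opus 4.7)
The plan is to carry out the classification by systematically enumerating the admissible sheeting data $(s_1,s_2,s_3,s_4;n)$ via Ng's constraints $\sum_{j=1}^4 1/s_j = k$, $s_j \mid n$, and $4/k \le n \le 2^{p-1}=8$, and then, for each profile, forcing $f$ into one of the listed shapes. The three key inputs are Theorem \ref{GR_pth root} for profiles with global sheeting number $n=p$, Theorem \ref{Thm_MaxGSN} for profiles with $\max_j s_j=2^{p-1}$, and Theorem \ref{Thm_Class_2or3} as the lower-dimensional classification, to be invoked whenever an isometry factors through $\Delta^2$ or $\Delta^3$.

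The cases $k=3,4$ are immediate: the identity $\sum 1/s_j = k$ forces the profiles $(1,1,1,1)$ and (up to permutation) $(1,1,2,2)$ respectively, which after a reparametrization yield items (4) and (3). For $k=2$ I would peel off every component with $s_j=1$ by normalizing it to $z$ via an automorphism of $\Delta^4$; the surviving slots then form an element of ${\bf HI}_1(\Delta,\Delta^3)$ whose shape is furnished by Theorem \ref{Thm_Class_2or3}, producing items (2)(b) and (2)(c). The unique remaining profile with every $s_j\ge 2$ is $(2,2,2,2)$, which I would handle by splitting the polarized functional equation
\[ \prod_{j=1}^4 \bigl(1-f^j(z)\overline{f^j(w)}\bigr) = (1-z\bar w)^2 \]
into two factors of the form $1-z\bar w$, each of which represents an isometry in ${\bf HI}_1(\Delta,\Delta^2;2)$, yielding (2)(a). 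For $k=1$, the admissible profiles (up to permutation, after peeling off linear slots) reduce to $(4,4,4,4)$ with $n\in\{4,8\}$, $(2,4,8,8)$ with $n=8$, $(3,3,6,6)$ with $n=6$, and $(2,6,6,6)$ with $n=6$; the first with $n=4$ is forced to be $F_4$ by Theorem \ref{GR_pth root}, and $(2,4,8,8)$ with $n=8$ has $\max s_j = 2^{p-1}$, so Theorem \ref{Thm_MaxGSN} yields the nested composition in (1)(b).

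The three remaining profiles $(4,4,4,4;8)$, $(3,3,6,6;6)$ and $(2,6,6,6;6)$ lie outside the reach of both Theorems \ref{GR_pth root} and \ref{Thm_MaxGSN}, and producing items (1)(c), (1)(d), (1)(e) from them is the main obstacle. For each I would show that the branched covering $\pi:V\to\mathbb{P}^1$ factors through an intermediate covering---two nested $2$-sheeted covers in the case $(4,4,4,4;8)$, and a $2$-sheeted cover above a $3$-sheeted cover in the two $n=6$ cases---forcing several components of $f$ to depend on a common intermediate variable. This intermediate cover would be extracted by analyzing, at each branch point on $\partial\Delta$, the Puiseux expansions of the component functions together with the ramification data furnished by the polarized functional equation and Ng's structural results on the projections $P_j:V\to V_j$, and then identifying a corresponding normal subgroup of the Galois group of $\pi$. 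Once the intermediate cover is in place, the rigidity of $F_2$ and $F_3$ (Theorem \ref{GR_pth root}) combined with the $p=3$ instance of Theorem \ref{Thm_Class_2or3} pins the component blocks down to the shapes $(\alpha_j,\beta_j)$ and $(h^2,h^3,h^4)$ displayed in the statement. The most delicate step will be ruling out all incompatible intermediate-cover configurations and correctly identifying which slots share which intermediate variable, which requires careful bookkeeping of the ramification indices at every branch point.
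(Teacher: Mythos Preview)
Your overall scaffolding---enumerate the admissible profiles $(s_1,\dots,s_4;n)$ under Ng's constraints and dispatch each---matches the paper's outline, and your treatment of $k=3,4$ and of the $k=2$ profiles containing a slot with $s_j=1$ is correct. Two points, however, are either genuine gaps or significant departures from what the paper (summarizing \cite{Ch16b}) actually does.

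First, your handling of the $k=2$ profile $(2,2,2,2)$ is not an argument. The polarized identity $\prod_{j=1}^4\bigl(1-f^j(z)\overline{f^j(w)}\bigr)=(1-z\bar w)^2$ does not \emph{a priori} split as a product of two factors each equal to $1-z\bar w$; that the four components pair off two-by-two is precisely the content to be proved, and nothing in the functional equation tells you which pairing to take. The paper explicitly flags ${\bf HI}_2(\Delta,\Delta^4;n;2,2,2,2)$ as a case requiring additional work beyond Ng's method, combining it with the techniques developed for the $k=1$ analysis.

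Second, for $k=1$ the paper isolates only \emph{two} profiles as lying beyond Ng's method, namely $(3,3,6,6;6)$ and $(4,4,4,4;8)$. The profile $(2,6,6,6;6)$ that you list among the hard cases is in fact covered by Ng's approach (the $s=2$ component already supplies the square-root intermediate variable). For $(3,3,6,6;6)$ the paper does not extract an intermediate cover via Galois theory; it invokes instead a structural result (Proposition~2.4 of \cite{Ch16b}): whenever two distinct components have sheeting number $3$, they assemble---together with a hidden third function---into a cube-root embedding $(\alpha,\beta,\gamma)=F_3$, and the remaining components factor through $\gamma$. For $(4,4,4,4;8)$, flagged as the most delicate case, the paper's route is a general reduction criterion from ${\bf HI}_k(\Delta,\Delta^p)$ to ${\bf HI}_k(\Delta,\Delta^{p-1})$, verified for this profile by comparing zeros and poles of specific meromorphic functions on the compact Riemann surface $X$ normalizing $V$. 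Your proposed Galois/normal-subgroup mechanism is a reasonable heuristic reformulation, but the entire difficulty lies in showing the $8$-sheeted cover is imprimitive, and ``Puiseux expansions and ramification data'' is not yet a mechanism for doing that; the meromorphic-function comparison on $X$ is the paper's concrete substitute.
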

\begin{rem}\label{Remark2.6}
\begin{enumerate}
\item[(1)] This theorem solves the classification problem in the affirmative when the target is the $4$-disk.
\item[(2)] Actually, item $(4)$ of this theorem follows from a result of Ng \cite{Ng10} (see item $(4)$ of Theorem \ref{Thm_Class_2or3}) and item $(3)$ has been proved by using the method of Ng \cite{Ng10}.
\end{enumerate}
\end{rem}

In \cite{Ch16b}, Chan listed all possible sheeting numbers and found examples obtained from the $q$-th root embeddings and the diagonal embeddings for the prescribed sheeting numbers.
One may observe that some cases can be done by using the method of Ng \cite{Ng10} in general when the normalizing constant $k$ is at most two. But for certain prescribed sheeting numbers, one could not apply Ng's method directly. 

When the normalizing constant $k$ equals one, there are two difficult cases which cannot be handled by using Ng's method directly.
In \cite{Ch16b}, Chan made use of some techniques as in the proof of the global rigidity of the $p$-th root embedding to handle one of the cases, namely, the space ${\bf HI}_1(\Delta,\Delta^4;6;3,3,6,6)$. Actually, Chan \cite[Proposition 2.4.]{Ch16b} established a general theorem which implies that for any holomorphic isometry $f\in{\bf HI}_1(\Delta,\Delta^p)$, $p\ge 3$, such that there are two distinct component functions of $f$ which have sheeting numbers equal to three, we have $f=(\alpha,\beta,F\circ \gamma)$ up to reparametrizations, where $F\in {\bf HI}_1(\Delta,\Delta^{p-2})$ and $(\alpha,\beta,\gamma)=F_3$ is the cube root embedding up to reparametrizations. 
The most difficult case is the space ${\bf HI}_1(\Delta,\Delta^4;8;4,4,4,4)$. In \cite{Ch16b}, Chan gave a criterion for reducing the classification problem of ${\bf HI}_k(\Delta,\Delta^p)$ to that of ${\bf HI}_k(\Delta,\Delta^{p-1})$ for $1\le k\le p-1$. Then, the rest is to prove that any $f\in {\bf HI}_1(\Delta,\Delta^4;8;4,4,4,4)$ satisfies such a criterion. In this direction, Chan \cite{Ch16b} made use of the branching behaviour of component functions of $f$, the polarized functional equations together with the finite branched coverings as meromorphic functions on compact Riemann surfaces and comparison of zeros and poles of certain meromorphic functions on compact Riemann surfaces.

When the normalizing constant $k$ equals two, we still need some arguments for classifying all holomorphic isometries in the space ${\bf HI}_2(\Delta,\Delta^4;n;2,2,2,2)$ by using both the method in the study of ${\bf HI}_1(\Delta,\Delta^4)$ by Chan \cite{Ch16b} and the method of Ng \cite{Ng10}.

\subsection{Classification of holomorphic isometries with special prescribed sheeting numbers or normalizing constants}
In \cite{Ch16b}, Chan generalized the methods in the proof of the global rigidity of the $p$-th root embeddings for $p\ge 2$ and obtained a generalization as follows:

\begin{thm}\label{Thm_GeneralGR}
Let $p$ and $k$ be integers satisfying $p\ge 2$,
$1\le k\le p$, ${p\over k}\in \mathbb Z$ and ${p\over k}\ge 2$.
We also let $f=(f^1,\ldots,f^p):(\Delta,kds_\Delta^2)\to (\Delta^p,ds_{\Delta^p}^2)$ be a holomorphic isometric embedding, where the global sheeting number is equal to $q:={p\over k}$.
Then, $f=(g_1,\ldots,g_k)$ up to reparametrizations, where $g_j=F_q$ up to reparametrizations for $1\le j\le k$ such that branching loci of all $g_j$'s are the same and $F_q:\Delta\to \Delta^q$ is the $q$-th root embedding.
\end{thm}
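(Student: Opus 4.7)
The plan is to reduce Theorem \ref{Thm_GeneralGR} to the $k=1$ global rigidity of the $q$-th root embedding (Theorem \ref{GR_pth root}) by partitioning the $p=qk$ component functions into $k$ groups of $q$, each of which fits the hypotheses of that theorem. I proceed in three steps.

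First, from Ng's identities $\sum_{j=1}^p 1/s_j = k$ and $s_j \mid n = q$ (so $s_j \le q$), together with the fact that there are exactly $p=qk$ summands, each $1/s_j \ge 1/q$, with equality forced throughout; hence $s_j = q$ for every $j$. In particular the projection $P_j:V\to V_j$ has degree $n/s_j = 1$, and so on normalisations $P_j$ lifts to a biholomorphism $X\cong X_j$ intertwining $\tau$ with $\pi_j$. All $p$ components $f^j$ are therefore parametrised by one common $q$-sheeted branched covering $\tau:X\to \mathbb P^1$.

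Next, I show that after a M\"obius change of coordinates, $X\cong\mathbb P^1$ and $\tau(x)=x^q$. For this I adapt Chan's boundary analysis from the proof of Theorem \ref{GR_pth root}: combining the polarised isometric equation
\[
\prod_{j=1}^p\bigl(1-f^j(z)\,\widetilde{f^j}(w)\bigr)=(1-zw)^k
\]
with the Puiseux expansions of the $f^j$'s around the boundary branch points on $\partial\Delta$, and using the common parametrisation from the previous step to aggregate the boundary data of the individual components into ramification data for the single covering $\tau$, one concludes that $\tau$ is totally ramified over exactly two points of $\partial\Delta$, each with ramification index $q$. Riemann--Hurwitz then forces $X\cong\mathbb P^1$ and $\tau(x)=x^q$ in suitable coordinates. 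I expect this step to be the main obstacle, since the higher multiplicity $k$ on the right-hand side of the polarised equation makes the boundary analysis less direct than in the case $k=1$ handled by Chan \cite{Ch16a}.

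Finally, with $\tau(x)=x^q$ in place, each $f^j$ lifts to a degree-$q$ rational function $G_j$ on $\mathbb P^1$, and the deck group $\mathbb Z/q\mathbb Z=\langle \gamma:x\mapsto e^{2\pi i/q}x\rangle$ of $\tau$ acts on $\{G_1,\dots,G_p\}$. Rewriting the polarised equation in the $x$-coordinate gives
\[
\prod_{j=1}^p\bigl(1-G_j(x)\,\widetilde{G_j}(y)\bigr)=\prod_{m=0}^{q-1}\bigl(1-\gamma^m xy\bigr)^k,
\]
and matching factors on both sides shows that the deck action partitions $\{G_1,\dots,G_p\}$ into exactly $k$ orbits of size $q$ (orbits of smaller size would force two $G_j$'s to coincide, which is ruled out by the polarised identity). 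Each such orbit, re-indexed as an ordered $q$-tuple, is a holomorphic isometry $(\Delta, ds_\Delta^2)\to(\Delta^q, ds_{\Delta^q}^2)$ with global sheeting $q$, and so by Theorem \ref{GR_pth root} is congruent to $F_q$. Since all $k$ orbits sit over the same covering $\tau$, they automatically share the common branching locus, yielding the decomposition $f=(g_1,\dots,g_k)$ with each $g_j$ a reparametrised $F_q$.
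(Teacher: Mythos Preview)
The paper does not actually prove this theorem; it is stated as a result from \cite{Ch16b} with only the one-sentence description that Chan ``generalized the methods in the proof of the global rigidity of the $p$-th root embeddings for $p\ge 2$''. Your outline is broadly consistent with that description, and Step~1 is correct and clean: from $\sum_j 1/s_j=k$, $s_j\mid n=q$ and $p=qk$ one indeed gets $s_j=q$ for all $j$, hence $\deg P_j=n/s_j=1$ and all the $X_j$ are identified with a single $X$.

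There is, however, a genuine gap in Step~3. You assert that the deck group $\langle\gamma\rangle$ of $\tau(x)=x^q$ \emph{permutes} the set $\{G_1,\dots,G_p\}$, but this is unjustified and in general false as stated. The function $G_j\circ\gamma$ is the lift to $X$ of another \emph{branch of the same $f^j$}, not of some other component $f^i$; there is no a priori reason it should coincide with any $G_i$ from the list. The invariance of the product $\prod_j\bigl(1-G_j(x)\widetilde{G_j}(y)\bigr)$ under $x\mapsto\gamma x$ is much weaker than a permutation of its factors, and your ``matching factors on both sides'' is too coarse: the $G_j$ are not linear in $x$, so the identity $\prod_j\bigl(1-G_j(x)\widetilde{G_j}(y)\bigr)=\prod_{m}(1-\gamma^m xy)^k$ does not split term by term. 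That the deck group cyclically permutes the components is precisely a special feature of $F_q$ --- the conclusion you are trying to reach --- so invoking it here is circular. What is actually needed is an argument producing a partition $\{1,\dots,p\}=S_1\sqcup\cdots\sqcup S_k$ with $\prod_{j\in S_i}\bigl(1-f^j(z)\widetilde{f^j}(w)\bigr)=1-zw$ for each $i$; this does not follow from the covering structure alone and is where the real work of \cite{Ch16b} beyond \cite{Ch16a} lies. Your Step~2 is honestly flagged as the main obstacle, but you give no indication of how the multiplicity $k$ on the right of the polarised equation is absorbed in Chan's boundary/Puiseux analysis; as written, both Steps~2 and~3 are plans rather than arguments.
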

\begin{rem}
This classifies all holomorphic isometries in ${\bf HI}_k(\Delta,\Delta^p;{p\over k})$ when $k|p$ and $p\ge 2$, which is the minimal case for general normalizing constant $k$ under the assumption that $k|p$.
(Noting that Chan \cite{Ch16c} observed that ${\bf HI}_k(\Delta,\Delta^p;\lceil{p\over k}\rceil)$ could be empty for some integers $k$ and $p\ge 2$ such that $1\le k\le p$ and $k$ does not divide $p$.)
\end{rem}

On the other hand, Chan \cite{Ch16c} proved the following two propositions for certain prescribed sheeting numbers as analogues of Theorem \ref{Thm_MaxGSN}. Actually, the following two propositions follow from induction and the general theorem mentioned in item (3) of Remark \ref{Remark2.6} (cf. Chan \cite[Proposition 2.4.]{Ch16b}).
\begin{pro}[cf. Chan \cite{Ch16b, Ch16c}]\label{Pro_SSN_3power_1}
Let
\[ f=(f^1,\ldots,f^{2q+1})\in {\bf HI}_1(\Delta,\Delta^{2q+1};n;3,3,3^2,3^2,\ldots,3^{q-1},3^{q-1},3^q,3^q,3^q), \]
where $q\ge 2$ and $n\ge 1$ are some integers.
Then, $f=G_{q}\circ G_{q-1}\circ \cdots \circ G_1$ up to reparametrizations, where
$G_j :\Delta^{2j-1} \to \Delta^{2j+1}$ is given by
\[G_j(z_1,\ldots,z_{2j-1}) = (z_1,\ldots,z_{2j-2},\alpha_j(z_{2j-1}),\beta_j(z_{2j-1}),\gamma_j(z_{2j-1}))\]
for $2\le j\le q$ and $G_1=(\alpha_1,\beta_1,\gamma_1)$ for some $(\alpha_j,\beta_j,\gamma_j)\in {\bf HI}_1(\Delta,\Delta^3;3)$, $1\le j\le q$.
\end{pro}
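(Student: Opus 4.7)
The plan is to proceed by induction on $q \geq 2$, using the general theorem (Chan \cite[Proposition~2.4]{Ch16b}) mentioned in item (3) of Remark \ref{Remark2.6} as the engine at each stage. That result says: if a holomorphic isometry in ${\bf HI}_1(\Delta,\Delta^p)$, $p\ge 3$, has two distinct component functions of sheeting number $3$, then up to reparametrizations it has the form $(\alpha,\beta,F\circ\gamma)$, where $(\alpha,\beta,\gamma)=F_3$ is the cube root embedding up to reparametrizations and $F\in {\bf HI}_1(\Delta,\Delta^{p-2})$. Since our $f\in {\bf HI}_1(\Delta,\Delta^{2q+1};n;3,3,3^2,3^2,\ldots,3^{q-1},3^{q-1},3^q,3^q,3^q)$ always has $s_1=s_2=3$, this proposition applies immediately, and peeling off the first two components is exactly what $G_1=(\alpha_1,\beta_1,\gamma_1)$ accomplishes.

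For the base case $q=2$ I would apply Chan's Proposition 2.4 once to write $f=(\alpha_1,\beta_1,F\circ\gamma_1)$ with $(\alpha_1,\beta_1,\gamma_1)$ a cube root embedding and $F\in {\bf HI}_1(\Delta,\Delta^3)$; then verify that $F$ itself has all three sheeting numbers equal to $3$ (so has global sheeting number $3$), whence Theorem \ref{GR_pth root} for $p=3$ forces $F=F_3$ up to reparametrizations, giving $G_2$. The conclusion is packaged by setting $G_2(z_1,z_2,z_3)=(z_1,z_2,\alpha_2(z_3),\beta_2(z_3),\gamma_2(z_3))$ with $(\alpha_2,\beta_2,\gamma_2)=F$.

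For the inductive step, assuming the statement for $q-1$, I would apply Chan's Proposition 2.4 to $f$ to obtain $f=(\alpha_1,\beta_1,F\circ\gamma_1)$ up to reparametrizations with $(\alpha_1,\beta_1,\gamma_1)\in {\bf HI}_1(\Delta,\Delta^3;3)$ and $F\in {\bf HI}_1(\Delta,\Delta^{2q-1})$. The key bookkeeping step is to show that the sheeting numbers of the components of $F$ are exactly
\[3,3,3^2,3^2,\ldots,3^{q-2},3^{q-2},3^{q-1},3^{q-1},3^{q-1},\]
i.e.\ the sheeting-number profile of the induction hypothesis for index $q-1$. This follows because the $j$-th component of $F\circ\gamma_1$ is $f^{j+2}$, whose sheeting number $s_{j+2}\in\{3,3^2,\ldots,3^q\}$ is a power of $3$, and since this function factors through $\gamma_1$ (of sheeting number $3$), the sheeting number of the corresponding component of $F$ must equal $s_{j+2}/3$. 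Once this is established, the induction hypothesis yields $F=G'_{q-1}\circ\cdots\circ G'_1$ in the prescribed form, and then defining $G_j(z_1,\ldots,z_{2j-1})=(z_1,z_2,G'_{j-1}(z_3,\ldots,z_{2j-1}))$-style embeddings (so that $G_j$ acts on the last $2j-3$ slots of its input by $G'_{j-1}$) realizes $f=G_q\circ G_{q-1}\circ\cdots\circ G_1$ up to reparametrizations.

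The main obstacle is the bookkeeping of sheeting numbers: verifying rigorously that passing from $f$ to the residual $F$ divides each sheeting number by exactly $3$ rather than by some proper divisor of $3$. The cleanest way to argue this is through the multiplicativity of sheeting numbers along the factorization $f^{j+2}=F^j\circ\gamma_1$, using that $\gamma_1$ has minimal polynomial of degree $3$ over $\mathbb{C}(w)$ and that the resulting bound $s_{j+2}\le 3\cdot s'_j$ must be saturated because $s_{j+2}$ is a strict power of $3$ and $s'_j$ divides the target polydisk dimension through Ng's divisibility relation $s'_j\mid n'$; once saturation is in hand, everything matches the $q-1$ profile exactly and the induction closes.
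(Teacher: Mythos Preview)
Your overall strategy---induction on $q$ with Chan's Proposition~2.4 (\cite[Proposition~2.4]{Ch16b}) as the peeling tool at each stage---is exactly what the paper indicates: it states that Propositions~\ref{Pro_SSN_3power_1} and~\ref{Pro_SSN_3power_2} ``follow from induction and the general theorem'' of Chan. So the architecture is correct and matches the paper.

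The gap is in the sheeting-number bookkeeping, precisely the step you flag as the main obstacle. Your justification for saturation of $s_{j+2}\le 3\,s'_j$ does not work: knowing that $s_{j+2}=3^k$ is a power of $3$ and that $s'_j\mid n'$ gives no upper bound on $s'_j$. What you actually have, via the tower $\mathbb C(w)\subset \mathbb C(\gamma_1(w))\subset \mathbb C(\gamma_1(w),F^j(\gamma_1(w)))$ and the fact that the minimal polynomial of $\gamma_1(w)$ over $\mathbb C(w,f^{j+2}(w))$ divides the degree-$3$ minimal polynomial over $\mathbb C(w)$, is that $3s'_j = d\cdot s_{j+2}$ with $d\in\{1,3\}$; equivalently $s'_j\in\{s_{j+2}/3,\,s_{j+2}\}$. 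To rule out $s'_j=s_{j+2}$ you should invoke Ng's identity $\sum_{j=1}^{2q-1} 1/s'_j = 1$ for $F\in{\bf HI}_1(\Delta,\Delta^{2q-1})$: since $\sum_{j\ge 3} 3/s_j = 3(1-\tfrac{2}{3})=1$, any instance of $s'_j=s_{j+2}$ (rather than $s_{j+2}/3$) would force $\sum 1/s'_j<1$, a contradiction. With this correction the induction closes as you describe.
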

\begin{pro}[cf. Chan \cite{Ch16b, Ch16c}]\label{Pro_SSN_3power_2}
Let
\[ f=(f^1,\ldots,f^{2q'+2})\in {\bf HI}_1(\Delta,\Delta^{2q'+2};n';3,3,3^2,3^2,\ldots,3^{q'},3^{q'},2\cdot 3^{q'},2\cdot 3^{q'}), \]
where $q'\ge 2$ and $n'\ge 1$ are some integers.
Then, we have
$f = G \circ G_{q'}\circ \cdots \circ G_1$ up to reparametrizations,
where $G_j :\Delta^{2j-1} \to \Delta^{2j+1}$ is given by
\[ G_j(z_1,\ldots,z_{2j-1}) = (z_1,\ldots,z_{2j-2},\alpha_j(z_{2j-1}),\beta_j(z_{2j-1}),\gamma_j(z_{2j-1}))\]
for $2\le j\le q'$ (if $q'\ge 2$),
$G_1=(\alpha_1,\beta_1,\gamma_1)$
and
$G:\Delta^{2q'+1}\to \Delta^{2q'+2}$ is given by
$G(z_1,\ldots,z_{2q'+1})=(z_1,\ldots,z_{2q'},h_1(z_{2q'+1}),h_2(z_{2q'+1}))$
for some $(h_1,h_2)\in {\bf HI}_1(\Delta,\Delta^2;2)$ and $(\alpha_j,\beta_j,\gamma_j)\in {\bf HI}_1(\Delta,\Delta^3;3)$ for $1\le j\le q'$.
\end{pro}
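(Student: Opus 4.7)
The plan is to argue by induction on $q'$, using Chan's Proposition 2.4 (the general result cited in item (3) of Remark \ref{Remark2.6}) as the inductive engine. The key observation is that any $f$ lying in one of the spaces under consideration has, by hypothesis, two distinct components ($f^1$ and $f^2$) of sheeting number $3$. Chan's proposition therefore allows us to peel off a cube root embedding, writing, up to reparametrizations,
\[ f = (\alpha_1,\beta_1,F\circ \gamma_1), \]
with $(\alpha_1,\beta_1,\gamma_1)=F_3$ up to reparametrizations and $F\in {\bf HI}_1(\Delta,\Delta^{2q'})$. Since $\gamma_1$ is a degree-$3$ branched covering, multiplicativity of generic fiber cardinalities under composition forces the $j$-th component of $F$ to have sheeting number equal to $\mathrm{sh}(f^{j+2})/3$, so that
\[ F\in {\bf HI}_1(\Delta,\Delta^{2q'};\,\cdot\,;\, 3,3,3^2,3^2,\ldots,3^{q'-1},3^{q'-1},2\cdot 3^{q'-1},2\cdot 3^{q'-1}). \]
In other words, $F$ sits in a space of exactly the same shape as in the statement, but with $q'$ replaced by $q'-1$.

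For the base case $q'=2$, the above reduction lands us in ${\bf HI}_1(\Delta,\Delta^4;\,\cdot\,;3,3,6,6)$. Applying Chan's Proposition 2.4 a second time peels off another cube root embedding $(\alpha_2,\beta_2,\gamma_2)=F_3$ and leaves a residual $F'\in {\bf HI}_1(\Delta,\Delta^2;2)$, which by definition has the form $(h_1,h_2)$. Expanding the nested compositions and relabelling coordinates yields the stated decomposition $f=G\circ G_2\circ G_1$. For the inductive step ($q'\ge 3$), we invoke the proposition for $q'-1\ge 2$ on the residual $F$, obtaining a factorization $F=\widetilde{G}\circ \widetilde{G}_{q'-1}\circ \cdots \circ \widetilde{G}_1$ of the prescribed form. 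Defining $G_{j+1}:\Delta^{2j+1}\to\Delta^{2j+3}$ to act as the identity on the first two coordinates and as $\widetilde{G}_j$ on the last $2j-1$ coordinates (and constructing $G$ from $\widetilde{G}$ in the same fashion), one verifies directly that $f=G\circ G_{q'}\circ \cdots \circ G_1$ has the required form, with $(\alpha_{j+1},\beta_{j+1},\gamma_{j+1})=(\widetilde\alpha_j,\widetilde\beta_j,\widetilde\gamma_j)$ and $(h_1,h_2)=(\widetilde h_1,\widetilde h_2)$.

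The chief technical point --- and the part demanding the most care --- is the sheeting-number bookkeeping: confirming that after peeling off a cube root embedding at each stage, the residual map really has the precise list of component sheeting numbers needed to re-apply Chan's Proposition 2.4 (or the inductive hypothesis). This is where the specific form $3,3,3^2,3^2,\ldots,3^{q'},3^{q'},2\cdot 3^{q'},2\cdot 3^{q'}$ enters crucially: each pair of triple-fold factors of $3$ in the list matches exactly one layer of the factorization, and the terminal pair $2\cdot 3^{q'}$ reduces all the way down to $(2,2)$, supplying the $(h_1,h_2)\in{\bf HI}_1(\Delta,\Delta^2;2)$ ingredient of $G$. Once this divisibility accounting is performed (together with the observation that the choice of the two "peeled-off" components is unique up to permutation, a reparametrization), the remainder of the proof is routine functional composition and re-indexing.
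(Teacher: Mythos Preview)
Your proposal is correct and follows essentially the same approach as the paper, which states only that the proposition follows from induction together with Chan's Proposition~2.4 (the general theorem that any $f\in{\bf HI}_1(\Delta,\Delta^p)$ having two distinct components of sheeting number~$3$ factors as $(\alpha,\beta,F\circ\gamma)$ with $(\alpha,\beta,\gamma)=F_3$ up to reparametrizations). Your write-up supplies the details of the inductive step and the sheeting-number bookkeeping that the paper omits, but the strategy is identical.
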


On the other hand, Chan \cite{Ch16b, Ch16c} observed that with sufficiently large normalizing constant $k$, the classification problem for ${\bf HI}(\Delta,\Delta^p)$ would reduce to the classification problem for ${\bf HI}(\Delta,\Delta^q)$ for some $q<p$ as follows:

\begin{pro}[cf. Chan \cite{Ch16c}]\label{Lem_Pdisk_iso_const_p-l}
Let $l$ and $p$ be integers satisfying $p\ge 4$ and $1\le l < {p\over 2}$.
If $f\in {\bf HI}_{p-l}(\Delta,\Delta^p)$ such that all component functions of $f$ are non-constant, then $f$ is the map $(H_1,H_2):\Delta \to \Delta^p$ up to reparametrizations for some
$H_1\in \mathbf{HI}_{l'-l}(\Delta,\Delta^{l'})$,
$H_2\in \mathbf{HI}_{p-l'}(\Delta,\Delta^{p-l'})$
and some integer $l'$ satisfying $l+1\le l'\le 2l$.
\end{pro}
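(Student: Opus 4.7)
The plan is to exploit the fact that the normalizing constant $k = p-l$ exceeds $p/2$, which forces many components of $f$ to be automorphisms of the unit disk. These components will split off as the diagonal factor $H_2$, and the remaining components will assemble into $H_1$. By Ng's identity $\sum_{j=1}^{p} 1/s_j = k = p-l$, together with $s_j \ge 1$ and $1/s_j \le 1/2$ whenever $s_j \ge 2$, setting $m := \#\{j : s_j = 1\}$ gives
\[ p - l \;=\; m + \sum_{s_j \ge 2} \tfrac{1}{s_j} \;\le\; m + \tfrac{p-m}{2}, \]
whence $m \ge p-2l$. Since $l \ge 1$, the residual sum is positive unless $m = p$, which would force $l = 0$ and is excluded; hence $m \le p-l-1$.

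After relabelling, I would assume $s_1 = \cdots = s_m = 1$ and $s_j \ge 2$ for $j > m$, and (by reparametrization on the source) that $f(0) = \mathbf{0}$. For each $j \le m$, the condition $s_j = 1$ means that the algebraic curve $V_j$ extending $\mathrm{Graph}(f^j)$ is the graph of a degree-one rational function, so $f^j$ is the restriction of a M\"obius transformation. The polarized functional equation
\[ (1 - z\bar w)^{p-l} \;=\; \prod_{j=1}^{p}\bigl(1 - f^j(z)\overline{f^j(w)}\bigr) \]
then forces each such M\"obius component to be an automorphism of $\Delta$, hence (using $f^j(0) = 0$) a rotation $z \mapsto e^{i\theta_j} z$. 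Composing with the inverse rotations on the corresponding target factors---an automorphism of $\Delta^p$---normalizes $f^j(z) = z$ for every $j \le m$. The functional equation then splits: after cancelling $(1 - z\bar w)^m$ from both sides,
\[ (1 - z\bar w)^{p-l-m} \;=\; \prod_{j>m}\bigl(1 - f^j(z)\overline{f^j(w)}\bigr), \]
so $(f^{m+1}, \ldots, f^p)$ is itself a holomorphic isometry in $\mathbf{HI}_{p-l-m}(\Delta, \Delta^{p-m})$. Setting $l' := p-m$, one takes $H_1 := (f^{m+1}, \ldots, f^p) \in \mathbf{HI}_{l'-l}(\Delta, \Delta^{l'})$ and $H_2 := (z, \ldots, z) \in \mathbf{HI}_{p-l'}(\Delta, \Delta^{p-l'})$; the bounds $p-2l \le m \le p-l-1$ translate directly into $l+1 \le l' \le 2l$.

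The main obstacle is justifying that $s_j = 1$ forces $f^j$ to be an automorphism of $\Delta$, rather than a M\"obius embedding onto a proper subdisk such as $z \mapsto z/2$. This requires a careful divisor analysis inside the polarized functional equation: for a non-automorphism M\"obius $f^j$, the zero locus of $1 - f^j(z)\overline{f^j(w)}$ does not coincide with $\{z\bar w = 1\}$, while every other factor on the right-hand side has zero locus supported on this set (the higher sheeting-number factors still cut out branched covers of $\{z\bar w = 1\}$), so the product cannot equal $(1 - z\bar w)^{p-l}$. Once this algebraic step is settled, the remainder of the argument is routine bookkeeping with the polarized functional equation and the action of $\mathrm{Aut}(\Delta^p)$.
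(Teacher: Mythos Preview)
The paper does not supply its own proof of this proposition; it is quoted as a result of Chan \cite{Ch16c} and then applied. So there is no in-paper argument to compare against directly. That said, your overall strategy is the natural one and almost certainly coincides with Chan's: use Ng's identity $\sum_j 1/s_j = p-l$ to count the components with $s_j=1$, peel those off as a diagonal factor $H_2$, and leave the remainder as $H_1$. Your arithmetic for the bounds $p-2l \le m \le p-l-1$ (hence $l+1 \le l' \le 2l$ with $l'=p-m$) is correct, and the paper itself remarks immediately after the proposition that $H_2$ is indeed the diagonal embedding $\varpi_{p-l'}$, which matches your picture.

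The one genuinely soft spot is exactly where you flag it: the implication ``$s_j=1 \Rightarrow f^j \in \mathrm{Aut}(\Delta)$''. Your proposed divisor argument is shaky as written. The assertion that for $s_j\ge 2$ the factor $1 - f^j(z)\overline{f^j(w)}$ has zero locus supported on $\{z\bar w = 1\}$ is not something you can read off directly: these are branches of multi-valued algebraic functions, and a priori only the \emph{product} is constrained to equal $(1-z\bar w)^{p-l}$; individual zero loci could in principle cancel against poles coming from other factors once one continues beyond $\Delta\times\Delta$. The clean route is to invoke the structural fact from the Mok--Ng theory underlying the sheeting-number formalism: each extended curve $V_j\subset\mathbb P^1\times\mathbb P^1$ is invariant under the anti-holomorphic involution $(w,z_j)\mapsto(1/\bar w,1/\bar z_j)$. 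When $s_j=1$, the curve $V_j$ is the graph of a M\"obius map $\phi$, and this invariance reads $\phi(1/\bar w)=1/\overline{\phi(w)}$, i.e.\ $\phi$ commutes with inversion through $\partial\Delta$. Since $\phi(0)=0\in\Delta$, this forces $\phi\in\mathrm{Aut}(\Delta)$. Equivalently, you may simply cite \cite{Ng10} for the statement that a component of sheeting number one is an automorphism. Once that is secured, the remainder of your argument (normalising each such component to the identity via $\mathrm{Aut}(\Delta^p)$, cancelling $(1-z\bar w)^m$ in the polarized functional equation, and reading off the isometry property of $H_1$) is routine and correct.
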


In this proposition, $H_2\in \mathbf{HI}_{p-l'}(\Delta,\Delta^{p-l'})$ is the diagonal embedding $\varpi_{p-l'}$ up to reparametrizations by \cite{Ng10}.
Moreover, Chan \cite{Ch16c} made use of this proposition to classify all holomorphic isometries in ${\bf HI}_k(\Delta,\Delta^p)$ for $k=p-1$ or $p-2$ when $p\ge 5$. (Noting that for $2\le p\le 4$ the classification result already follows from Ng \cite{Ng10} and Chan \cite{Ch16b}.)
More precisely, we have

\begin{pro}[cf. Chan \cite{Ch16c}]\label{Pro_SLNC}
Let $f\in {\bf HI}_{k}(\Delta,\Delta^p)$ be such that all component functions of $f$ are non-constant.
Then, we have the following:
\begin{enumerate}
\item If $p\ge 3$ and $k=p-1$, then $f(z)=(\alpha_1(z),\alpha_2(z),z,\ldots,z)$ up to reparametrizations, where $(\alpha_1,\alpha_2)\in {\bf HI}_1(\Delta,\Delta^2;2)$.
\item If $p\ge 5$ and $k=p-2$, then $f(z)$ is one of the following up to reparametrizations:
\begin{enumerate}
\item $(H_1(z),H_2(z))$, where $H_2\in \mathbf{HI}_{p-3}(\Delta,\Delta^{p-3})$ is given by $H_2(z)=(z,\ldots,z)$ and $H_1(z)$ is either the cube root embedding or the map $(\alpha_1(z),\alpha_2(\beta_1(z)),\beta_2(\beta_1(z)))$ for some $(\alpha_j,\beta_j)\in {\bf HI}_1(\Delta,\Delta^2)$, $j=1,2$;
\item $(G_1(z),G_2(z))$, where $G_2 \in \mathbf{HI}_{p-4}(\Delta,\Delta^{p-4})$ is given by $G_2(z)=(z,\ldots,z)$ and $G_1(z)$ is one of the following:
\begin{enumerate}
\item $(\alpha_1(z),\beta_1(z),\alpha_2(z),\beta_2(z))$ with $(\alpha_j,\beta_j)\in{\bf HI}_1(\Delta,\Delta^2;2)$ for $j=1,2$,
\item $(z,\alpha_1(z),(\alpha_2\circ \beta_1)(z),(\beta_2\circ \beta_1)(z))$ with $(\alpha_j,\beta_j)\in{\bf HI}_1(\Delta,\Delta^2;2)$ for $j=1,2$,
\item $(z,\alpha_1(z),\alpha_2(z),\alpha_3(z))$ with $(\alpha_1,\alpha_2,\alpha_3)\in {\bf HI}_1(\Delta,\Delta^3;3)$.
\end{enumerate}
\end{enumerate}
\end{enumerate}
\end{pro}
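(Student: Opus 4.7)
The plan is to use Proposition \ref{Lem_Pdisk_iso_const_p-l} as the main reduction tool, and then combine its output with the classifications Theorem \ref{Thm_Class_2or3} and Theorem \ref{Thm_Class_4}. Writing $f=(H_1,H_2)$ as in Proposition \ref{Lem_Pdisk_iso_const_p-l} with $H_1\in \mathbf{HI}_{l'-l}(\Delta,\Delta^{l'})$ and $H_2\in \mathbf{HI}_{p-l'}(\Delta,\Delta^{p-l'})$, the second factor $H_2$ has its normalizing constant equal to its target dimension, so item (4) of Theorem \ref{Thm_Class_2or3} (Ng's diagonal result) forces $H_2(z)=(z,\ldots,z)$ up to reparametrizations. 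Non-constancy of the components of $H_1$ is inherited from that of $f$, so the hypotheses of the subsequent classifications are satisfied, and only $H_1$ needs further attention.

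For part (1), take $l=1$: the constraint $1\le l<p/2$ holds since $p\ge 3$, and the admissible range $l+1\le l'\le 2l$ collapses to $l'=2$. Then $H_1\in \mathbf{HI}_1(\Delta,\Delta^2)$, and item (1) of Theorem \ref{Thm_Class_2or3} identifies $H_1$ with the square root embedding, giving $H_1=(\alpha_1,\alpha_2)$ for some $(\alpha_1,\alpha_2)\in \mathbf{HI}_1(\Delta,\Delta^2;2)$ up to reparametrizations. Together with the diagonal $H_2\in \mathbf{HI}_{p-2}(\Delta,\Delta^{p-2})$, this recovers the form asserted in (1) (including the degenerate case $p=3$, where $\mathbf{HI}_1(\Delta,\Delta^1)$ contributes only an automorphism of $\Delta$).

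For part (2), take $l=2$: the condition $l<p/2$ corresponds exactly to the hypothesis $p\ge 5$, and the admissible range becomes $3\le l'\le 4$. In the case $l'=3$ we have $H_1\in \mathbf{HI}_1(\Delta,\Delta^3)$, and item (2) of Theorem \ref{Thm_Class_2or3} lists $H_1$ as either the cube root embedding $F_3$ or $(\alpha_1,\alpha_2\circ\beta_1,\beta_2\circ\beta_1)$ with $(\alpha_j,\beta_j)\in \mathbf{HI}_1(\Delta,\Delta^2)$; combined with the diagonal $H_2\in \mathbf{HI}_{p-3}(\Delta,\Delta^{p-3})$, this is sub-case (2)(a). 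In the case $l'=4$ we have $H_1\in \mathbf{HI}_2(\Delta,\Delta^4)$, and item (2) of Theorem \ref{Thm_Class_4} supplies precisely the three shapes listed in sub-cases (2)(b)(i)--(iii), paired with the diagonal $H_2\in \mathbf{HI}_{p-4}(\Delta,\Delta^{p-4})$. These two cases together exhaust part (2).

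The main obstacle is therefore invisible in this write-up, since it has already been absorbed into Proposition \ref{Lem_Pdisk_iso_const_p-l}, whose own proof requires Chan's analysis of holomorphic isometries with large normalizing constant (the polarized functional equations and branching/Puiseux arguments flagged earlier in the exposition). Once that reduction is available, the present proposition is a purely combinatorial assembly: enumerate the admissible $l'$, invoke the classifications for the small target dimensions ($l'\in\{2,3,4\}$), and observe that the $H_2$ factor is always the diagonal by Ng.
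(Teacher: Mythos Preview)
Your approach is exactly the one indicated by the paper: apply Proposition~\ref{Lem_Pdisk_iso_const_p-l} to split off a diagonal piece $H_2$ via item~(4) of Theorem~\ref{Thm_Class_2or3}, and then feed the remaining factor $H_1$ into the low-dimensional classifications (Theorem~\ref{Thm_Class_2or3} for $l'=2,3$ and Theorem~\ref{Thm_Class_4} for $l'=4$). The paper says precisely this, noting in addition that the small cases $2\le p\le 4$ are already covered by \cite{Ng10} and \cite{Ch16b}.

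There is one small slip in your write-up of part~(1): Proposition~\ref{Lem_Pdisk_iso_const_p-l} carries the hypothesis $p\ge 4$, so you cannot invoke it for $p=3$, and your parenthetical about the ``degenerate case $p=3$'' is therefore not justified as written. This is harmless, since for $p=3$, $k=2$ the conclusion is literally item~(3) of Theorem~\ref{Thm_Class_2or3}; you should simply cite that case separately rather than pretend Proposition~\ref{Lem_Pdisk_iso_const_p-l} covers it. With that one-line patch, your argument is complete and matches the paper's.
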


By using Proposition \ref{Pro_SLNC} and Theorem \ref{Thm_GeneralGR}, Chan \cite{Ch16c} proved the following:
\begin{thm}[cf. Chan \cite{Ch16c}]\label{Thm_Prime}
Let $p$ and $k$ be integers satisfying $p\ge 2$ and $p\ge k \ge 1$.
We also let $n$ be a prime number such that ${p\over k}\le n\le 2^{p-1}$.
Then, either ${\bf HI}_k(\Delta,\Delta^p;n)= \varnothing$ or ${\bf HI}_k(\Delta,\Delta^p;n)\neq\varnothing$ and all holomorphic isometries in ${\bf HI}_k(\Delta,\Delta^p;n)$ are parametrized by the $n$-th root embedding, the diagonal embeddings, and automorphisms of $\Delta$ and $\Delta^p$.
\end{thm}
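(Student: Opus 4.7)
The plan is to exploit primality of $n$ to drastically restrict the possible sheeting number distributions, isolate the automorphism components, and then invoke Theorem~\ref{Thm_GeneralGR} on the remaining sub-map.

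First, Ng's divisibility $s_j \mid n$ combined with $n$ prime forces each $s_j \in \{1, n\}$. Let $a$ (resp.\ $b$) be the number of components with $s_j = 1$ (resp.\ $s_j = n$). Then $a + b = p$ and $a + b/n = k$ yield $a(n-1) = nk - p$. When $(n-1) \nmid (nk-p)$ this has no non-negative integer solution, so ${\bf HI}_k(\Delta,\Delta^p;n) = \varnothing$. Otherwise $a = (nk-p)/(n-1)$ and $b = nm$ with $m := (p-k)/(n-1)$; the hypotheses $p \ge k$ and $n \ge p/k$ ensure $a, b \ge 0$, and primality of $n$ (which excludes all components from having $s_j = 1$) forces $b \ge 1$, hence $m \ge 1$.

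Next, any component $f^j$ with $s_j = 1$ extends to the graph of a degree-one rational function, i.e., a M\"obius transformation. The Blaschke-product description of components of holomorphic isometries into polydisks (cf.\ \cite{Ng10}) then forces $f^j \in \mathrm{Aut}(\Delta)$. Post-composing $f$ with an element of $\mathrm{Aut}(\Delta^p)$ that sends each such $f^j$ to the identity, we may assume these $a$ components equal $z \mapsto z$. Collecting the remaining $b$ components into $\tilde f : \Delta \to \Delta^{b}$, the identity components contribute exactly $a \cdot ds_\Delta^2$ to $f^* ds_{\Delta^p}^2 = k\, ds_\Delta^2$, so
\[
\tilde f^* ds_{\Delta^{b}}^2 = (k - a)\, ds_\Delta^2 = m\, ds_\Delta^2,
\]
placing $\tilde f$ in ${\bf HI}_m(\Delta, \Delta^{nm})$. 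Moreover, the global sheeting number $\tilde n$ of $\tilde f$ satisfies $\tilde n \mid n$, since the extended curve of $\tilde f$ is a coordinate projection of that of $f$, and $n \mid \tilde n$, since $s_j = n$ still divides $\tilde n$ for any retained index $j$; hence $\tilde n = n$ coincides with the minimum $nm/m$.

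Finally, applying Theorem~\ref{Thm_GeneralGR} to $\tilde f$ (whose parameters satisfy $nm/m = n \in \mathbb{Z}$ with $n \ge 2$ and which realizes the minimal global sheeting number) expresses $\tilde f$, up to reparametrizations, as a product of $m$ copies of the $n$-th root embedding $F_n$ with a common branching locus. Combined with the $a$ identity components (an $a$-fold diagonal embedding) and the ambient automorphisms used for normalization, this exhibits $f$ as parametrized by $F_n$, diagonal embeddings, and automorphisms of $\Delta$ and $\Delta^p$. I expect the subtlest step to be verifying $\tilde n = n$: a priori, projecting out the trivial components could lower the degree of the extended algebraic curve, but the two-sided divisibility argument $n \mid \tilde n \mid n$ rules this out.
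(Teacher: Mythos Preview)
Your argument is essentially the paper's: primality of $n$ forces $s_j\in\{1,n\}$; the $s_j=1$ components are normalized to the identity (a diagonal piece $\varpi_a$); and Theorem~\ref{Thm_GeneralGR} handles the remaining block $\tilde f\in {\bf HI}_m(\Delta,\Delta^{nm};n)$. The paper organizes this as two cases, $a=0$ (condition (i): $k\mid p$ and $n=p/k$) versus $a=l\ge 1$ (condition (ii): $(n-1)\mid(p-k)$ and $n=(p-l)/(k-l)$), whereas you treat them uniformly---a cosmetic difference only. Your two-sided divisibility argument for $\tilde n=n$ is correct.

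One step is underjustified. You assert that $s_j=1$ makes $f^j$ ``a degree-one rational function, i.e., a M\"obius transformation,'' but $s_j$ is by definition the degree of the projection $(w,z_j)\mapsto w$ from $V_j$, so $s_j=1$ only says $f^j$ is a single-valued rational function of $w$; it does \emph{not} bound the degree of $f^j$ as a self-map of $\mathbb P^1$, which is the degree of the \emph{other} projection $(w,z_j)\mapsto z_j$. The conclusion $f^j\in\mathrm{Aut}(\Delta)$ is correct, but it relies on the symmetry of $V_j$ (both projections of $V_j$ have the same degree $s_j$, coming from the duality in Ng's structure theory and made explicit in \cite{Ch16c}), not merely on a ``Blaschke-product description.'' The paper also defers this point to \cite{Ch16c}, so your sketch is at a comparable level of detail; just be aware that this is where the substantive content beyond sheeting-number bookkeeping actually lies.
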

\begin{rem}
This solves the classification problem of Mok \cite{Mok11} in the affirmative for the space ${\bf HI}_k(\Delta,\Delta^p;n)$ when the global sheeting number $n$ is a prime.
\end{rem}

In the proof of Theorem \ref{Thm_Prime}, it suffices to consider the case where 
$p$ and $k$ are integers such that $p \ge 5$ and $p-3 \ge k \ge 2$ because other cases have been done by the previous classification results and \cite{Ng10}.
Then, Chan \cite{Ch16c} showed that if $n$ is a prime number such that ${\bf HI}_k(\Delta,\Delta^p;n)\neq \varnothing$, then either (i) $k|p$ and $n={p\over k}$ or (ii) $(n-1)|(p-k)$ and $n={p-l\over k-l}\ge {p-1\over k-1}$ for some integer $l\ge 1$.
In other words, if none of the conditions (i) and (ii) hold true, then the space ${\bf HI}_k(\Delta,\Delta^p;n)$ is empty.
Conversely, Chan \cite{Ch16c} also showed that given a prime number $n$ and positive integers $k$ and $p$ such that either (i) or (ii) holds true, the space ${\bf HI}_k(\Delta,\Delta^p;n)$ is non-empty.
Then, Chan \cite{Ch16c} further obtained the classification of all holomorphic isometries in ${\bf HI}_k(\Delta,\Delta^p;n)$.
When condition (i) holds true, the classification of all holomorphic isometries in the space ${\bf HI}_k(\Delta,\Delta^p;n)$ follows from Theorem \ref{Thm_GeneralGR} (cf. \cite{Ch16c}).
In the case where condition (ii) holds true, Chan \cite{Ch16c} proved that any $f\in {\bf HI}_k(\Delta,\Delta^p;n)$ is the map
$(\varpi_l,g_1,\ldots,g_{k-l})$ up to reparametrizations,
where $\varpi_l:\Delta\to\Delta^l$ is the diagonal embedding given by $\varpi_l(z)=(z,\ldots,z)$ and $g_j:\Delta\to \Delta^n$ is the $n$-th root embedding $F_n$ up to reparametrizations, $1\le j\le k-l$, such that the branching loci of all $g_j$'s are the same for $1\le j\le k-l$.

\medskip
In general, the classification problem for ${\bf HI}(\Delta,\Delta^p)$, $p\ge 5$, is difficult by using the constructions in \cite{Ng10, Ch16c} because when $p=5$ we already find some spaces ${\bf HI}_k(\Delta,\Delta^5;n;s_1,\ldots,s_5)$ from the settings of Ng \cite{Ng10}, where $k,s_1,\ldots,s_5$ and $n$ are some positive integers satisfying $1\le k\le 5$, ${5\over k}\le n \le 16$ and $s_j|n$ for $1\le j\le 5$, such that one does not have any known example of holomorphic isometry in such spaces. On the other hand, the structures of the finite branched coverings $\pi:V\to \mathbb P^1$ and $\pi_j:V_j\to \mathbb P^1$, $j=1,\ldots,p$, are much more complicated in general for the study of ${\bf HI}_k(\Delta,\Delta^p)$ with $p\ge 5$.

\subsection{Holomorphic isometries between polydisks}
In \cite{Ng10}, Ng obtained the following classification theorem on holomorphic isometries between polydisks as mentioned at the beginning of the present article.

\begin{thm}[cf. Section 9 in Ng \cite{Ng10}]\label{Ngpoly}
Let $f:(\Delta^q,\lambda ds_{\Delta^q}^2)\to (\Delta^p,ds_{\Delta^p}^2)$ be a holomorphic isometry, where $\lambda >0$ is a real constant.
Then, $\lambda=:k$ is a positive integer satisfying $k\le {p\over q}$ and we have $f(z_1,\ldots,z_q)=(G_1(z_1),\ldots,G_q(z_q))$ up to reparametrizations for some holomorphic isometries
$G_j:(\Delta,k ds_{\Delta}^2)\to (\Delta^{p_j},ds_{\Delta^{p_j}}^2)$, $j=1,\ldots,q$, where $p_1,\ldots,p_q$ are some positive integers satisfying $\sum_{j=1}^q p_j = p$.
\end{thm}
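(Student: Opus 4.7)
The natural tool is the polarized functional equation. Normalizing so that $f(0)=0$, the equality of K\"ahler potentials attached to the isometric embedding condition polarizes (treating $\bar w$ as an independent holomorphic variable $u$) to
\[
\prod_{j=1}^p \bigl(1-f^j(z)\bar f^j(u)\bigr) \;=\; \prod_{i=1}^q (1-z_i u_i)^\lambda,
\]
valid as an identity of germs of holomorphic functions at the origin of $\mathbb C^q\times\mathbb C^q$, where $\bar f^j$ denotes the function whose Taylor coefficients are the conjugates of those of $f^j$. The entire argument revolves around analyzing this identity.

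The first step is to verify that $\lambda$ is a positive integer satisfying $q\lambda\le p$. Restricting $f$ to a one-dimensional slice, obtained after an automorphism by fixing $q-1$ of the source variables at $0$, produces a holomorphic isometry $(\Delta,\lambda ds^2_\Delta)\to(\Delta^p,ds^2_{\Delta^p})$, and Ng's one-variable integrality result recalled earlier in the section gives $\lambda\in\mathbb Z_{>0}$. The bound $q\lambda\le p$ then follows by comparing irreducible factor counts in the polarized identity: the right-hand side contains $q\lambda$ irreducible factors $1-z_i u_i$ (with multiplicity), while the left-hand side contributes at most $p$ nontrivial factors, one for each nonconstant $f^j$.

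The main step --- and the main obstacle --- is to show that each nonconstant $f^j$ depends on only one of the variables $z_1,\ldots,z_q$. I would first promote $1-f^j(z)\bar f^j(u)$ from a holomorphic germ to a polynomial divisor of the right-hand side in $\mathbb C[z,u]$, using that the zero set of the product equals $\bigcup_i\{z_iu_i=1\}$ and that each irreducible component of this algebraic set must be captured by exactly one factor on the left. Unique factorization in $\mathbb C[z,u]$, combined with the fact that the $1-z_iu_i$ are pairwise non-associated irreducibles, then forces
\[
1-f^j(z)\bar f^j(u) \;=\; c_j\bigl(1-z_{\sigma(j)}u_{\sigma(j)}\bigr)^{m_j}
\]
for some $\sigma(j)\in\{1,\ldots,q\}$, constant $c_j$, and integer $m_j\ge 1$. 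Specializing $u=\bar z$ shows $1-|f^j(z)|^2$ depends on $z$ only through $z_{\sigma(j)}$; combined with the holomorphicity of $f^j$ and $f^j(0)=0$, this implies $f^j$ is a holomorphic function of $z_{\sigma(j)}$ alone. The delicate point is precisely the step that rules out a factor $1-f^j(z)\bar f^j(u)$ mixing several of the $z_i$, which requires careful identification of the irreducible components of the analytic germs involved.

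Once each $f^j$ depends on a single source variable, I group the components according to $\sigma\colon\{1,\ldots,p\}\to\{1,\ldots,q\}$ and apply a permutation automorphism of $\Delta^p$ to obtain $f(z)=(G_1(z_1),\ldots,G_q(z_q))$, where $G_i\colon\Delta\to\Delta^{p_i}$ collects the components with $\sigma(j)=i$ and $p_i=|\sigma^{-1}(i)|$ satisfies $\sum_i p_i=p$. Restricting the polarized identity to the slice $z_k=u_k=0$ for $k\ne i$ verifies that each $G_i$ is itself a holomorphic isometry $(\Delta,\lambda ds^2_\Delta)\to(\Delta^{p_i},ds^2_{\Delta^{p_i}})$, completing the argument.
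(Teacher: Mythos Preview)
The paper does not supply its own proof of this theorem; it records the statement and refers to Section~9 of Ng \cite{Ng10}. So there is no ``paper's proof'' to compare against, but your proposal nonetheless contains a genuine gap that would make the argument fail as written.

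The central error is the claim that each factor $1-f^j(z)\bar f^j(u)$ can be promoted to a \emph{polynomial} divisor of $\prod_i(1-z_iu_i)^\lambda$, yielding the global identity
\[
1-f^j(z)\bar f^j(u)=c_j\bigl(1-z_{\sigma(j)}u_{\sigma(j)}\bigr)^{m_j}.
\]
This is false. Already in the case $q=1$, $p=2$, the components $(\alpha,\beta)$ of the square root embedding satisfy $(1-\alpha(z)\bar\alpha(u))(1-\beta(z)\bar\beta(u))=1-zu$, and neither factor is a polynomial in $(z,u)$, let alone a power of $1-zu$. For general $q$ the conclusion of the theorem says only that $f^j$ depends on a single variable $z_{\sigma(j)}$; the restricted function $f^j(z_{\sigma(j)})$ can be any component of a holomorphic isometry in ${\bf HI}_\lambda(\Delta,\Delta^{p_{\sigma(j)}})$, and these are typically algebraic but irrational. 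Hence unique factorization in $\mathbb C[z,u]$ is simply unavailable, and the asserted global identity is not the correct target. The same objection undermines your derivation of the bound $q\lambda\le p$ by ``irreducible factor counts'': the left-hand factors are not polynomials, so no such count is defined.

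What does work---and is essentially the mechanism behind Ng's argument, as well as the paper's own proof of the higher-dimensional analogue Theorem~4.2---is a boundary argument. By algebraicity (Theorem~\ref{Thm:Algebraicity}) one extends $f$ holomorphically across a generic piece $(V_1\cap\partial\Delta)\times V_2\times\cdots\times V_q$ of $\partial\Delta\times\Delta^{q-1}$. The functional equation forces $\prod_j(1-|f^j|^2)$ to vanish on this real hypersurface; since the factors are real-analytic, one of them, say $1-|f^{j_0}|^2$, vanishes identically there. For each fixed $w_1\in V_1\cap\partial\Delta$ the map $(z_2,\ldots,z_q)\mapsto f^{j_0}(w_1,z_2,\ldots,z_q)$ is then holomorphic with values in $\partial\Delta$, hence constant by the open mapping theorem. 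The identity theorem in the $z_1$-variable now gives that $f^{j_0}$ depends only on $z_1$. Peeling off this factor and inducting on $p$ completes the decomposition, after which the bound $\lambda\le p_i$ for each $i$ (from the one-variable theory) yields $q\lambda\le p$. Your polarized-identity framework can also be salvaged along similar lines---working locally in the UFD of germs at a point of $\{z_1u_1=1\}$ after analytic continuation, and then varying the free coordinates $z_2,\ldots,z_q,u_2,\ldots,u_q$ along that hypersurface---but the missing ingredient is precisely this ``vary the transverse variables'' step, not a global polynomial factorization.
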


This theorem implies that the classification problem of holomorphic isometries from $(\Delta^q,k ds_{\Delta^q}^2)$ into $(\Delta^p,ds_{\Delta^p}^2)$, $p\ge 2$, is reduced to that of holomorphic isometries from $(\Delta,k ds_{\Delta}^2)$ into $(\Delta^{p'},ds_{\Delta^{p'}}^2)$ for any positive integer $p'\le p$.

\section{On holomorphic isometries from the Poincar\'e disk into the product of complex unit balls}
\subsection{On holomorphic isometries from the Poincar\'e disk into the product of the Poincar\'e disk with the complex unit ball}
Let $\mathbb{B}^n=\{z \in \mathbb{C}^n: \lVert z\rVert <1 \}$ be the unit ball in $\mathbb{C}^n.$
In \cite{YZ12}, Yuan-Zhang proved that any holomorphic isometry from $(\mathbb B^n,g_{\mathbb B^n})$, $n\ge 2$, into $(\mathbb B^{N_1},\lambda_1 g_{\mathbb B^{N_1}})\times\cdots \times (\mathbb B^{N_m},\lambda_m g_{\mathbb B^{N_m}})$ is totally geodesic, generalizing earlier results of \cite{Mok02} and \cite{Ng11}, where $\lambda_j>0$, $1\le j\le m$, are real constants.
In the case where the domain is the Poincar\'e disk $(\Delta,g_\Delta)$, the study of all holomorphic isometries in ${\bf HI}(\Delta,\Delta^p)$ is a special case in the study of all holomorphic isometries from $(\Delta,g_\Delta)$ into $(\mathbb B^{N_1},\lambda_1 g_{\mathbb B^{N_1}})\times\cdots \times (\mathbb B^{N_m},\lambda_m g_{\mathbb B^{N_m}})$, where $\lambda_j>0$, $1\le j\le m$, are real constants.
Moreover, we have seen that there are many non-standard (i.e., not totally geodesic) holomorphic isometries in ${\bf HI}(\Delta,\Delta^p)$. Therefore, it is expected that there are many non-standard holomorphic isometries from $(\Delta,g_\Delta)$ into $(\mathbb B^{N_1},\lambda_1 g_{\mathbb B^{N_1}})\times\cdots \times (\mathbb B^{N_m},\lambda_m g_{\mathbb B^{N_m}})$ and the classification of such holomorphic isometries would be much more difficult than that of ${\bf HI}(\Delta,\Delta^p)$.

In \cite{Ng10}, Ng proved that any holomorphic isometry in ${\bf HI}_1(\Delta,\Delta^2)$ with non-constant component functions is the square root embedding up to reparametrizations.
Motivated by the study in \cite{Ng10} and \cite{YZ12}, it is natural to study all holomorphic isometries from $(\Delta,g_\Delta)$ into $(\mathbb B^{N_1}, g_{\mathbb B^{N_1}})\times (\mathbb B^{N_2}, g_{\mathbb B^{N_2}})$ for $N_1,N_2\ge 1$ and at least one of the $N_1$ and $N_2$ is greater than or equal to two.
The simplest situation in this study is when $N_1=1$ and $N_2\ge 2$ because any holomorphic isometry from $(\Delta,g_\Delta)$ into $(\Delta,g_\Delta)\times (\mathbb B^{N_2}, g_{\mathbb B^{N_2}})$ can be naturally regarded as a holomorphic isometry from $(\Delta,g_\Delta)$ into $(\mathbb B^{N_1}, g_{\mathbb B^{N_1}})\times (\mathbb B^{N_2}, g_{\mathbb B^{N_2}})$ for any integer $N_1\ge 2$.
This motivated the study in \cite{CY17} and Chan-Yuan obtained the following existence theorem:

\begin{pro}[Chan-Yuan \cite{CY17}]\label{Thm_Existence1}
Let $n\ge 2$ be an integer and $f:=(f_1,f_{2,1},\ldots,f_{2,n}):\Delta \to \Delta \times \mathbb B^n$ be a holomorphic map.
Then, $f$ is a holomorphic isometry from $(\Delta,g_\Delta)$ into $(\Delta,g_\Delta)\times (\mathbb B^n,g_{\mathbb B^n})$ with $f(0)=\bf 0$ if and only if
\begin{equation}\label{Eq:Existence1}  
{\bf U} \begin{pmatrix}
f_1(w),f_{2,1}(w),\ldots, f_{2,n}(w)
\end{pmatrix}^t
= \begin{pmatrix}
w,f_1(w)  f_{2,1}(w),\ldots, f_1(w) f_{2,n}(w)
\end{pmatrix}^t.
\end{equation}
on $\Delta$ for some unitary matrix ${\bf U}:=\begin{pmatrix}
u_{ij}
\end{pmatrix}_{1\le i,j\le n+1} \in U(n+1)$.
Moreover, given any unitary matrix ${\bf U}\in U(n+1)$, there exists a unique holomorphic isometry $(f_1,f_{2,1},\ldots,f_{2,n}):(\Delta,g_\Delta)\to (\Delta,g_\Delta)\times (\mathbb B^n,g_{\mathbb B^n})$ up to reparametrizations, such that equation (\ref{Eq:Existence1}) holds.
\end{pro}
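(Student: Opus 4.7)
The plan is to convert the isometry condition into an equality of Hermitian squared norms of two vectors of holomorphic functions, then invoke Calabi's polarization-based unitary rigidity to extract $\mathbf{U}$, and finally solve the resulting functional equation for $f$ given $\mathbf{U}$.

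First I would turn the isometry condition into a K\"ahler-potential equation via Calabi's diastasis. Since $\Delta$ and $\mathbb{B}^n$ admit the global diastasis functions $-\log(1-|w|^2)$ and $-\log(1-\|z\|^2)$ (with respect to the base points $0$ and $\mathbf{0}$), and since $f(0) = \mathbf{0}$, isometric invariance of the diastasis yields
\[ 1-|w|^2 = (1-|f_1(w)|^2)(1-\|f_2(w)\|^2) \quad \text{on } \Delta. \]
Expanding gives the Hermitian-norm identity
\[ |f_1(w)|^2 + \sum_{j=1}^n |f_{2,j}(w)|^2 \;=\; |w|^2 + \sum_{j=1}^n |f_1(w) f_{2,j}(w)|^2, \]
so the $\mathbb{C}^{n+1}$-valued holomorphic tuples $\mathbf{a} := (f_1, f_{2,1}, \ldots, f_{2,n})$ and $\mathbf{b} := (w, f_1 f_{2,1}, \ldots, f_1 f_{2,n})$ have identical squared norms on $\Delta$. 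Polarizing $\bar w \mapsto \bar\zeta$ and applying the standard fact that two $\mathbb{C}^{n+1}$-valued holomorphic tuples with identical Hermitian squared norms differ by a $U(n+1)$-transformation, I obtain a unitary $\mathbf{U} = (u_{ij}) \in U(n+1)$ with $\mathbf{U}\mathbf{a} \equiv \mathbf{b}$, which is precisely (\ref{Eq:Existence1}). The converse is immediate: because $\mathbf{U}$ preserves Hermitian norms, (\ref{Eq:Existence1}) reproduces the norm identity and hence the isometric condition.

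For the existence-and-uniqueness part, given $\mathbf{U} \in U(n+1)$ I would rewrite (\ref{Eq:Existence1}) as $\mathbf{a} = \mathbf{U}^*\mathbf{b}$, obtaining the coupled system
\[ f_1 = \overline{u_{11}}\,w + \sum_{j=1}^n \overline{u_{j+1,1}}\,f_1 f_{2,j}, \qquad f_{2,k} = \overline{u_{1,k+1}}\,w + \sum_{j=1}^n \overline{u_{j+1,k+1}}\,f_1 f_{2,j}. \]
The nonlinear terms vanish to second order at the origin, so the implicit function theorem at $(w,\mathbf{a}) = (0,\mathbf{0})$ provides a unique holomorphic germ $f$ with $f(0) = \mathbf{0}$, and the Taylor coefficients of $f$ at $0$ are determined recursively, giving uniqueness up to reparameterizations. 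The main obstacle I expect is the global extension of this local germ to a holomorphic map $\Delta \to \Delta \times \mathbb{B}^n$: here the key observation is that any solution of (\ref{Eq:Existence1}) automatically satisfies the diastasis identity $1-|w|^2 = (1-|f_1|^2)(1-\|f_2\|^2)$ on its domain of existence, which forces $|f_1(w)|<1$ and $\|f_2(w)\|<1$ whenever $|w|<1$. A continuation/monodromy argument along radii of $\Delta$, using this pointwise control to preclude any finite-time blow-up or escape from $\Delta \times \mathbb{B}^n$, should then extend $f$ holomorphically to all of $\Delta$ and complete the proof.
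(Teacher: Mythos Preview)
Your proposal is correct and follows essentially the same route as the paper: reduce the isometry condition to the functional equation $(1-|f_1|^2)(1-\lVert f_2\rVert^2)=1-|w|^2$, invoke Calabi's local rigidity (equivalently, polarization plus unitary equivalence of holomorphic tuples with equal Hermitian norms) to produce $\mathbf{U}\in U(n+1)$, and for the converse solve the system as a germ and extend. The paper is terser on the extension step than you are; your continuation argument is fine, though the cleanest way to globalize is to note that the germ is a local holomorphic isometry and apply Calabi's extension theorem on the simply connected domain $\Delta$.
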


Assuming $f(0)={\bf 0}$, the condition of $f:=(f_1,f_{2,1},\ldots,f_{2,n}):(\Delta,g_\Delta)\to(\Delta,g_\Delta)\times (\mathbb B^n,g_{\mathbb B^n})$ being a holomorphic isometry is equivalent to the condition that $f$ satisfies
the following functional equation
\[ (1-|f_1(w)|^2) \left(1-\sum_{j=1}^n |f_{2,j}(w)|^2\right) = 1-|w|^2 \]
on $\Delta$.
Then, it follows from the local rigidity theorem of Calabi \cite[Theorem 2]{Ca53} that there exists ${\bf U}\in U(n+1)$ such that
\[  {\bf U} \begin{pmatrix}
f_1(w),f_{2,1}(w),\ldots, f_{2,n}(w)
\end{pmatrix}^t
= \begin{pmatrix}
w,f_1(w)  f_{2,1}(w),\ldots, f_1(w) f_{2,n}(w)
\end{pmatrix}^t. \]
On the other hand, if equation (\ref{Eq:Existence1}) holds, then one can easily solve $f$ out as a germ of holomorphic isometry which extends to a global holomorphic isometry.

In order to reduce the complexity in the constructions of holomorphic isometries from a given unitary matrix ${\bf U}\in U(n+1)$, Chan-Yuan \cite{CY17} renormalized the matrix ${\bf U}$ by certain automorphisms of $\Delta$ and $\Delta\times \mathbb B^n$ so that one may assume that $\begin{pmatrix}
u_{ij}
\end{pmatrix}_{2\le i,j \le n+1}$ is an upper triangular matrix if we write ${\bf U}=\begin{pmatrix}
u_{ij}
\end{pmatrix}_{1\le i,j \le n+1}\in U(n+1)$. This does not change the equivalence class of the given holomorphic isometry $f:(\Delta,g_\Delta)\to (\Delta,g_\Delta)\times (\mathbb B^n,g_{\mathbb B^n})$.
Since $f_1\equiv 0$ would imply that $f$ is a totally geodesic holomorphic isometric embedding from $(\Delta,g_\Delta)$ into $(\mathbb B^n,g_{\mathbb B^n})$, from now on we may assume that $f_1$ is a non-constant function.
In fact, Chan-Yuan \cite{CY17} showed that $f_1\equiv 0$ if and only if $\det\begin{pmatrix}
u_{ij}
\end{pmatrix}_{2\le i,j\le n+1}=0$ under the above settings.
In particular, in what follows we may assume that $\begin{pmatrix}
u_{ij}
\end{pmatrix}_{2\le i,j\le n+1}$ is invertible.
Then, from the constructions we have $f_{2,j}(w) = R_j(f_1(w))$ for some rational function $R_j:\mathbb P^1\to \mathbb P^1$, $1\le j\le n$, and there is a rational function $R:\mathbb P^1\to \mathbb P^1$ such that $R(f_1(w))=w$ for all $w\in \Delta$.
(Noting that $R_j$ and $R$ can be written explicitly in terms of the entries of the unitary matrix ${\bf U}$.)

Under the above settings, as a key step, Chan-Yuan \cite{CY17} showed that the rational function $R:\mathbb P^1\to \mathbb P^1$ satisfies $R\left({1\over \overline z}\right)={1\over \overline{R(z)}}$ and
\[ R(z)=\alpha_0 z\prod_{j=1}^n {z-{1\over \overline{\alpha_j}}\over z-\alpha_j}, \]
for some $\alpha_j\in \overline\Delta \smallsetminus \{0\}$, $1\le j\le n$, and $\alpha_0\in \overline\Delta\smallsetminus \{0\}$.
In particular, we have $\deg(R)\le n+1$.
This yields the following theorem.
\begin{thm}[Chan-Yuan \cite{CY17}]\label{Thm_Reduction1}
Let $f=(f_1,f_{2,1},\ldots,f_{2,n}): (\Delta,g_\Delta)\to (\Delta,g_\Delta)\times(\mathbb B^n,g_{\mathbb B^n})$ be a holomorphic isometry with $f(0)=\bf 0$ such that $f_1$ is non-constant, where $n\ge 2$ is an integer.
Let $R:\mathbb P^1\to \mathbb P^1$ be the rational function such that $R(f_1(w))=w$.
Then, we have the following:
\begin{enumerate}
\item[(1)] If $\deg(R)=m+1\le n$ for some positive integer $m$, then $f$ is congruent to a map $\widetilde f: (\Delta,g_\Delta)\to (\Delta,g_\Delta)\times(\mathbb B^n,g_{\mathbb B^n})$ defined by $\widetilde f(w)=(F(w),{\bf 0})$ for some holomorphic isometry $F:(\Delta,g_\Delta)\to (\Delta,g_\Delta)\times(\mathbb B^{m},g_{\mathbb B^{m}})$ such that $F$ is incongruent to
$(\widetilde F,{\bf 0})$ for any holomorphic isometry $\widetilde F:(\Delta,g_\Delta)\to (\Delta,g_\Delta)\times(\mathbb B^{m'},g_{\mathbb B^{m'}})$ with $m'\le m-1$ (resp. $\widetilde F:(\Delta,g_\Delta)\to (\Delta,g_\Delta)$)
whenever $m\ge 2$ (resp. $m=1$).
\item[(2)]
If $\deg(R)=1$, then $f$ is congruent to a map $\widetilde f: (\Delta,g_\Delta)\to (\Delta,g_\Delta)\times(\mathbb B^n,g_{\mathbb B^n})$ defined by $\widetilde f(w)=(w,{\bf 0})$.
\end{enumerate}
\end{thm}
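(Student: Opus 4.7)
The plan is to leverage the explicit form of the rational function $R$ and the formulae $f_{2,j}=R_j\circ f_1$ established just before the theorem, combined with a reproducing-kernel analysis of the polarised isometry equation. Throughout I assume $f(0)=\mathbf 0$ and that $\mathbf U\in U(n+1)$ is in the normalised form of Chan--Yuan; as recalled in the text,
\[ R(z)=\alpha_0\, z\prod_{j=1}^n\frac{z-1/\overline{\alpha_j}}{z-\alpha_j},\qquad \alpha_j\in\overline{\Delta}\setminus\{0\}, \]
and the functional symmetry $R(1/\overline z)=1/\overline{R(z)}$ forces $R$, up to a unimodular constant, to be a finite Blaschke product of degree $\deg R$.

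For Part~(2), $\deg R=1$ means that all $n$ cancellations between zeros $1/\overline{\alpha_i}$ in the numerator and poles $\alpha_j$ in the denominator must occur; tracking the pairing $\alpha_{\sigma(j)}=1/\overline{\alpha_j}$ cycle-by-cycle and using $|\alpha_j|\le 1$ forces $|\alpha_j|=1$ for every $j$, so every Blaschke factor is trivial and $R(z)=\alpha_0 z$ with $|\alpha_0|=1$. Then $R\circ f_1=\mathrm{id}$ gives $f_1(w)=\overline{\alpha_0}w$; composing in the source by the rotation $\alpha_0\in\mathrm{Aut}(\Delta)$ normalises $f_1$ to the identity, and the functional equation $(1-|f_1|^2)(1-\sum_j|f_{2,j}|^2)=1-|w|^2$ then collapses to $\sum_j|f_{2,j}|^2\equiv 0$, yielding $\widetilde f(w)=(w,\mathbf 0)$.

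For Part~(1) with $\deg R=m+1\le n$, the crux is to show that $\mathbf R:=(R_1,\dots,R_n)$ takes values in an $m$-dimensional $\mathbb C$-linear subspace $L\subset\mathbb C^n$. Polarising the isometry equation and substituting $w=R(z)$, $\eta=R(\zeta)$ on the biholomorphic image $f_1(\Delta)$ gives the rational identity
\[ \sum_{j=1}^n R_j(z)\,\overline{R_j(\zeta)}\;=\;1-\frac{1-R(z)\,\overline{R(\zeta)}}{1-z\overline\zeta}. \]
By Malmquist--Takenaka, $K_R(z,\zeta):=\frac{1-R(z)\overline{R(\zeta)}}{1-z\overline\zeta}$ is the reproducing kernel of the model space $\mathcal K_R:=H^2\ominus RH^2$, of dimension $\deg R=m+1$. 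Since $R(0)=0$, one has $K_R(z,0)\equiv 1$, so the constant function lies in $\mathcal K_R$; writing $\phi(\zeta):=1-K_R(\cdot,\zeta)=K_R(\cdot,0)-K_R(\cdot,\zeta)$, the reproducing property gives $\langle g,\phi(\zeta)\rangle=g(0)-g(\zeta)$, which vanishes for every $\zeta\in\Delta$ iff $g$ is constant. Hence $\mathrm{span}\{\phi(\zeta):\zeta\in\Delta\}$ has codimension $1$ in $\mathcal K_R$ and dimension $m$, so the Gram kernel on the left has rank $m$, i.e.\ $\dim_{\mathbb C}\mathrm{span}\,\mathbf R(\Delta)=m$. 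Picking $W\in U(n)$ sending $L:=\mathrm{span}\,\mathbf R(\Delta)$ onto $\mathbb C^m\times\{0\}^{n-m}$ and post-composing $f$ with $(\mathrm{id}_\Delta,W)\in\mathrm{Aut}(\Delta\times\mathbb B^n)$ produces a representative $(F,\mathbf 0)$ of the congruence class of $f$, with $F:(\Delta,g_\Delta)\to(\Delta,g_\Delta)\times(\mathbb B^m,g_{\mathbb B^m})$ a holomorphic isometry.

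For the incongruence statement on $F$, observe that the rational function attached to $F$ via Proposition~\ref{Thm_Existence1} coincides with $R$ (automorphisms of source and target act on it only by M\"obius transformations of $\mathbb P^1$, which preserve the degree), hence has degree $m+1$. If $F$ were congruent to $(\widetilde F,\mathbf 0)$ with $\widetilde F:\Delta\to\Delta\times\mathbb B^{m'}$ for some $m'\le m-1$ (or $\widetilde F:\Delta\to\Delta$ when $m=1$), running the same kernel analysis on $\widetilde F$ would give an associated rational function of degree at most $m'+1\le m<\deg R$, a contradiction. The principal obstacle is the rank calculation in Part~(1)---converting the arithmetic degeneracy $\deg R=m+1$ into the geometric degeneracy of $\mathbf R$---after which the reduction of $f$ to $(F,\mathbf 0)$ is a standard unitary rotation and the incongruence step is an immediate degree count.
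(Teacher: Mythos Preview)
The paper is a survey and does not itself supply a proof of this theorem; it cites \cite{CY17} and, in the paragraph preceding the statement, indicates that the argument there runs through the \emph{explicit} formulae for $R$ and the $R_j$ in terms of the entries of the normalised unitary matrix $\mathbf U\in U(n+1)$. So a direct comparison is not really possible; what can be said is that your kernel/rank route is different from the matrix-entry computation alluded to in the paper.

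Your Part~(2) is fine: the cancellation/pairing argument correctly forces $|\alpha_j|=1$ for all $j$ and $|\alpha_0|=1$, whence $f_1$ is a rotation and the functional equation kills the $\mathbb B^n$-component.

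Part~(1), however, has a genuine gap. The sentence ``the functional symmetry $R(1/\overline z)=1/\overline{R(z)}$ forces $R$, up to a unimodular constant, to be a finite Blaschke product'' is false. That symmetry only gives $|R|=1$ on $\partial\Delta$; a rational function with this property is in general a \emph{ratio} of finite Blaschke products, not a Blaschke product. Concretely, from the displayed form $R(z)=\alpha_0 z\prod_j (z-1/\overline{\alpha_j})/(z-\alpha_j)$ with $\alpha_j\in\overline\Delta\setminus\{0\}$, a factor is trivial exactly when $|\alpha_j|=1$, so $\deg R=1+\#\{j:|\alpha_j|<1\}$. Thus whenever $\deg R=m+1\ge 2$ there is at least one $\alpha_j\in\Delta$, and $R$ has a pole there. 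Hence $R\notin H^\infty(\Delta)$, $RH^2$ is not a subspace of $H^2$, the ``model space'' $\mathcal K_R=H^2\ominus RH^2$ is undefined, and neither the Malmquist--Takenaka basis nor the reproducing-property computation you use to get codimension one is available.

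The conclusion you want---that the Gram kernel $\sum_j R_j(z)\overline{R_j(\zeta)}=1-K_R(z,\zeta)$ has rank exactly $m$---is still true, but it needs a different justification: either a direct polynomial calculation (write $R=P/Q$ in lowest terms, use $R(0)=0$ and $\deg P=\deg Q+1=m+1$, and exploit that the symmetry forces $(1-z\overline\zeta)\mid \bigl(P(z)\overline{P(\zeta)}-z\overline\zeta\,Q(z)\overline{Q(\zeta)}\bigr)$ to bound the rank above by $m$, then argue equality), or an appeal to Kre\u\i n--Langer theory for generalized Schur functions, where $K_R$ is a reproducing kernel on a finite-dimensional Pontryagin space of dimension $\deg R$. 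As written, your Part~(1) does not stand.
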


The theorem simply transforms the equivalence relation of two holomorphic isometries from $ (\Delta,g_\Delta)$ to $(\Delta,g_\Delta)\times(\mathbb B^n,g_{\mathbb B^n})$ to the equivalence relation of the rational function $R(z)$ and further uses the degree of $R(z)$ to reduce the dimension of the complex unit ball $\mathbb B^n$ in the target $\Delta\times \mathbb B^n$. Moreover, Chan-Yuan \cite{CY17} showed the existence of holomorphic isometry from $(\Delta,g_\Delta)$ into $(\Delta,g_\Delta)\times(\mathbb B^n,g_{\mathbb B^n})$, $n\ge 2$, in which the degree of the corresponding rational function $R$ is equal to $n+1$. 
From the previous results and constructions, Chan-Yuan \cite{CY17} obtained the following theorem so as to describe the space of all holomorphic isometries from $(\Delta,g_\Delta)$ into $(\Delta,g_\Delta)\times(\mathbb B^n,g_{\mathbb B^n})$ in an explicit way through $R(z)$.
\begin{thm}[Chan-Yuan \cite{CY17}]\label{Thm:Red_Para1}
Let $f=(f_1,f_{2,1},\ldots,f_{2,n}): (\Delta,g_\Delta)\to (\Delta,g_\Delta)\times(\mathbb B^n,g_{\mathbb B^n})$ be a holomorphic isometry with $f(0)=\bf 0$ such that $f_1$ is non-constant, where $n\ge 2$ is an integer.
Then, $f$ can be determined by some holomorphic isometry $\widetilde f=(\widetilde f_1,\widetilde f_{2,1},\ldots,\widetilde f_{2,n-1}):(\Delta,g_\Delta)\to (\Delta,g_\Delta)\times(\mathbb B^{n-1},g_{\mathbb B^{n-1}})$ and by some parameter $\zeta\in \overline\Delta\smallsetminus\{0\}$ up to congruence in the following explicit way: 
$$R(z) = \widetilde R(z) {\overline{\zeta}z-1\over z- \zeta},$$
where $R,\widetilde R:\mathbb P^1 \to \mathbb P^1$ are the rational functions such that $R(f_1(w))=w$ and $\widetilde R(\widetilde f_1(w))=w$ on $\Delta$.
\end{thm}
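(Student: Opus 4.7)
The plan is to exploit the explicit form of the rational function $R$ derived in Chan-Yuan \cite{CY17}, namely
\[ R(z)=\alpha_0 z\prod_{j=1}^n \frac{z-1/\overline{\alpha_j}}{z-\alpha_j} \]
with $\alpha_0,\alpha_j\in \overline\Delta\smallsetminus\{0\}$, and to peel off one of the $n$ ``Blaschke-type'' factors as $(\overline\zeta z-1)/(z-\zeta)$. Concretely, I would choose some index $k$ and set $\zeta:=\alpha_k$; using the elementary identity
\[ \frac{z-1/\overline{\alpha_k}}{z-\alpha_k}=\frac{1}{\overline{\alpha_k}}\cdot\frac{\overline\zeta z-1}{z-\zeta}, \]
I obtain $R(z)=\widetilde R(z)\cdot\frac{\overline\zeta z-1}{z-\zeta}$ with
\[ \widetilde R(z)=\frac{\alpha_0}{\overline{\alpha_k}}\,z\prod_{j\neq k}\frac{z-1/\overline{\alpha_j}}{z-\alpha_j}, \]
so $\widetilde R$ has the same shape as $R$ but with only $n-1$ nontrivial Blaschke-type factors.

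The next step is to verify that $\widetilde R$ genuinely arises from a holomorphic isometry $\widetilde f:(\Delta,g_\Delta)\to (\Delta,g_\Delta)\times (\mathbb B^{n-1},g_{\mathbb B^{n-1}})$. From the symmetry $R(1/\bar z)=1/\overline{R(z)}$ established in \cite{CY17}, comparing leading coefficients yields the magnitude constraint $|\alpha_0|=\prod_{j=1}^n|\alpha_j|$. Setting $\widetilde\alpha_0:=\alpha_0/\overline{\alpha_k}$ and relabeling $\widetilde\alpha_j:=\alpha_j$ for $j\neq k$ as $j=1,\ldots,n-1$, this constraint becomes $|\widetilde\alpha_0|=\prod_{j=1}^{n-1}|\widetilde\alpha_j|\le 1$, which is precisely the symmetry condition $\widetilde R(1/\bar z)=1/\overline{\widetilde R(z)}$ in dimension $n-1$. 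Applying Proposition \ref{Thm_Existence1} in dimension $n-1$ (i.e., the correspondence between unitary matrices in $U(n)$ and holomorphic isometries into $\Delta\times\mathbb B^{n-1}$, together with the renormalization putting the lower-right block in upper triangular form), the parameter list $(\widetilde\alpha_0,\widetilde\alpha_1,\ldots,\widetilde\alpha_{n-1})$ produces a unique holomorphic isometry $\widetilde f$ up to congruence, with $\widetilde R(\widetilde f_1(w))=w$. Reversing the procedure, any pair $(\widetilde f,\zeta)$ reconstructs $R$ through the displayed product identity and hence determines $f$ via Proposition \ref{Thm_Existence1}.

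The main obstacle I anticipate is confirming that the congruence class of $f$ is recorded faithfully by $(\widetilde f,\zeta)$. A priori, choosing a different pole $\alpha_{k'}$ to extract gives a different pair; one must show that such rearrangements correspond to automorphisms of $\mathbb B^n$ permuting the ball coordinates of $f$, and therefore lie in the same congruence class. One must also ensure $\widetilde f_1$ is non-constant so that Chan-Yuan's framework applies to $\widetilde f$; in the degenerate range $\deg R\le n$, Theorem \ref{Thm_Reduction1} already reduces $f$ to a lower-dimensional isometry, and the statement should be interpreted in that reduced form. Once these bookkeeping points are handled, the factorization identity $R(z)=\widetilde R(z)(\overline\zeta z-1)/(z-\zeta)$ is a direct algebraic consequence of the explicit product formula for $R$.
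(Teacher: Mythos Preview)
Your approach is correct and matches the paper's treatment. The paper does not supply a detailed proof here---it is a survey citing \cite{CY17} and remarks only that the theorem is obtained ``from the previous results and constructions,'' meaning the explicit product formula $R(z)=\alpha_0 z\prod_{j=1}^n (z-1/\overline{\alpha_j})/(z-\alpha_j)$ together with Proposition~\ref{Thm_Existence1}; your argument of peeling off one Blaschke-type factor and invoking the symmetry constraint to land $\widetilde R$ back in the admissible class for $\Delta\times\mathbb B^{n-1}$ is precisely the derivation those remarks point to.
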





Chan-Yuan \cite{CY17} further showed the existence of a real $1$-parameter family of mutually incongruent holomorphic isometries from $(\Delta,g_\Delta)$ into $(\Delta,g_\Delta)\times(\mathbb B^n,g_{\mathbb B^n})$, where $n\ge 2$ is an integer.

\begin{pro}[Chan-Yuan \cite{CY17}]\label{Pro:Ex_1PFICHIndim}
Let $n\ge 2$ be an integer.
Then, there is a real $1$-parameter family $\{f_t\}_{t\in \mathbb R}$ of mutually incongruent holomorphic isometries $f_{t}:(\Delta,g_\Delta)\to (\Delta,g_\Delta)\times (\mathbb B^n,g_{\mathbb B^n})$.
More generally, there is a family $\{f_\zeta\}_{\zeta\in A_n}$ of holomorphic isometries $f_{\zeta}:(\Delta,g_\Delta)\to (\Delta,g_\Delta)\times (\mathbb B^n,g_{\mathbb B^n})$ such that for any $\zeta,\zeta'\in A_n:=\left\{\xi\in \mathbb C\mid {n-1\over n+1}<|\xi|<1\right\}$, $f_\zeta$ and $f_{\zeta'}$ are congruent to each other if and only if $|\zeta|=|\zeta'|$.
In addition, if $n\ge 3$, then there is a family $\{f_\zeta\}_{\zeta\in \Delta^*}$ of holomorphic isometries $f_{\zeta}:(\Delta,g_\Delta)\to (\Delta,g_\Delta)\times (\mathbb B^n,g_{\mathbb B^n})$ such that for any $\zeta,\zeta'\in \Delta^*:=\Delta\smallsetminus\{0\}$, $f_\zeta$ and $f_{\zeta'}$ are congruent to each other if and only if $|\zeta|=|\zeta'|$.
\end{pro}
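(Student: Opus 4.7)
The plan is to build the family $\{f_\zeta\}_{\zeta \in A_n}$ by iterating Theorem \ref{Thm:Red_Para1} once, starting from a fixed base holomorphic isometry $\widetilde{f}:(\Delta,g_\Delta)\to(\Delta,g_\Delta)\times(\mathbb B^{n-1},g_{\mathbb B^{n-1}})$ of maximal degree, whose associated rational function $\widetilde R$ has $\deg(\widetilde R)=n$. For each $\zeta\in\overline\Delta\smallsetminus\{0\}$, Theorem \ref{Thm:Red_Para1} produces a holomorphic isometry $f_\zeta:(\Delta,g_\Delta)\to(\Delta,g_\Delta)\times(\mathbb B^n,g_{\mathbb B^n})$, unique up to congruence, whose rational function is
\[ R_\zeta(z)\;=\;\widetilde R(z)\cdot\frac{\overline\zeta z-1}{z-\zeta}. \]
For this to be a genuine $n$-ball isometry (not one factoring through $\Delta\times\mathbb B^{m}$ with $m<n$), Theorem \ref{Thm_Reduction1} requires $\deg(R_\zeta)=n+1$, which in particular forces $\zeta$ to avoid the finitely many values at which collapsing of a pole/zero occurs with $\widetilde R$.

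The next step is to translate congruence between $f_\zeta$ and $f_{\zeta'}$ into an equivalence on the rational functions. Since $\mathrm{Aut}(\Delta\times\mathbb B^n)=\mathrm{Aut}(\Delta)\times\mathrm{Aut}(\mathbb B^n)$ (no swap of factors for $n\ge 2$), a congruence $f_\zeta=\Psi\circ f_{\zeta'}\circ\phi$ implies that, writing $\Psi=(\psi_1,\psi_2)$ with $\psi_1\in\mathrm{Aut}(\Delta)$, the first component of $f_\zeta$ transforms by $(f_\zeta)_1=\psi_1\circ(f_{\zeta'})_1\circ\phi$, and hence the defining equations $R_\zeta((f_\zeta)_1)=w$ and $R_{\zeta'}((f_{\zeta'})_1)=w$ give a relation of the form $R_\zeta\circ\psi_1=\phi^{-1}\circ R_{\zeta'}$ as rational self-maps $\mathbb P^1\to\mathbb P^1$. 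The divisor of poles of $R_\zeta$ inside $\Delta$ (which consists of the interior poles of $\widetilde R$ together with $\zeta$) is therefore transported by $\psi_1^{-1}$ to the corresponding divisor for $R_{\zeta'}$.

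Now I would choose the base $\widetilde{f}$ so that its pole divisor is preserved by a prescribed rotational symmetry (e.g.\ choosing the entries of the defining unitary matrix so that the interior poles $\alpha_1,\ldots,\alpha_{n-1}$ of $\widetilde R$ sit at the origin, or are rotation-equivariant). Under that choice, the constraint $R_\zeta\circ\psi_1=\phi^{-1}\circ R_{\zeta'}$ forces $\psi_1$ to be a rotation $z\mapsto e^{i\theta}z$, which transports $\zeta$ to $e^{-i\theta}\zeta$; in particular $|\zeta|=|\zeta'|$, and conversely for every $\theta$ the rotation produces an explicit congruence. This gives the ``iff $|\zeta|=|\zeta'|$'' statement. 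The real $1$-parameter family $\{f_t\}_{t\in\mathbb R}$ is then obtained by picking any $r\in(\frac{n-1}{n+1},1)$ and setting $f_t:=f_{re^{it}}$ for a fixed $r$; the incongruence within this curve is immediate from the $|\zeta|$-invariance if one instead varies $|\zeta|$, and the full mutually-incongruent real $1$-parameter family follows by selecting a continuous slice of radii in $A_n$.

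The main obstacle will be pinning down the range $A_n=\{(n-1)/(n+1)<|\xi|<1\}$: one must verify that for $|\zeta|$ in precisely this interval, no algebraic cancellation occurs between $\frac{\overline\zeta z-1}{z-\zeta}$ and a factor of $\widetilde R$ after allowable disk automorphisms of source and target, so that $\deg(R_\zeta)=n+1$ is preserved and $f_\zeta$ stays in the true ``$n$-ball stratum''. The lower bound is the sharp threshold where the new pole at $\zeta$ ceases to be separable, under rotations, from those of $\widetilde R$; I expect this comes out of a direct analysis of the factorization $R_\zeta(z)=\alpha_0 z\prod_{j=1}^n\frac{z-1/\overline{\alpha_j}}{z-\alpha_j}$ together with the bound $\deg R_\zeta\le n+1$ proved in \cite{CY17}. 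The final assertion for $n\ge 3$ and $\zeta\in\Delta^*$ should follow by exploiting the additional degrees of freedom in choosing $\widetilde f$ so that the cancellation phenomenon no longer obstructs the small-$|\zeta|$ regime.
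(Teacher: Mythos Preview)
Your approach differs from the one indicated in the paper. The paper's Remark after the proposition states that the proof in \cite{CY17} proceeds by an \emph{explicit construction} of unitary matrices ${\bf U}_\zeta\in U(n+1)$ depending on $\zeta$, then invoking Proposition~\ref{Thm_Existence1} to produce $f_\zeta$ directly; the matrix displayed in Section~3 for $n=2$ is precisely such a ${\bf U}_\zeta$. You instead try to build $f_\zeta$ abstractly via the inductive step of Theorem~\ref{Thm:Red_Para1}, adjoining one M\"obius factor to a fixed base $\widetilde R$.

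There is a genuine gap. Your own ``main obstacle'' --- the appearance of the specific annulus $A_n=\{\frac{n-1}{n+1}<|\xi|<1\}$ --- is not explained by your mechanism. Degree collapse in $R_\zeta=\widetilde R\cdot\frac{\overline\zeta z-1}{z-\zeta}$ occurs only when the new pole $\zeta$ (or new zero $1/\overline\zeta$) cancels a zero (or pole) of $\widetilde R$, i.e.\ at \emph{finitely many} values of $\zeta$, not on an entire inner disk $\{|\zeta|\le\frac{n-1}{n+1}\}$. The threshold $\frac{n-1}{n+1}$ has no natural interpretation in your scheme; in \cite{CY17} it emerges from the explicit entries of ${\bf U}_\zeta$ (and from a congruence invariant computed from them), not from pole-counting in $R_\zeta$. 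Likewise, the third assertion for $n\ge 3$ with parameter set $\Delta^*$ is produced in \cite{CY17} by a \emph{different} explicit family of unitary matrices, not by ``additional degrees of freedom in choosing $\widetilde f$''.

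Your congruence analysis is also incomplete. You assert you can choose $\widetilde f$ with rotation-invariant interior pole divisor and that this forces $\psi_1$ to be a rotation, but a general $\psi_1\in\mathrm{Aut}(\Delta)$ need not fix $0$; you must first argue (from $R_\zeta(0)=0$ and the relation $R_\zeta\circ\psi_1=\phi^{-1}\circ R_{\zeta'}$) that the zero at the origin is preserved, and then that the remaining pole configuration pins $\psi_1$ down to a rotation. Finally, a small slip: setting $f_t:=f_{re^{it}}$ with $r$ fixed produces maps that are all \emph{congruent} by your own ``iff $|\zeta|=|\zeta'|$'' criterion; the mutually incongruent real $1$-parameter family must be a radial slice $t\mapsto f_{\zeta(t)}$ with $|\zeta(t)|$ strictly monotone through $(\frac{n-1}{n+1},1)$.
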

\begin{rem}
The proof of this proposition in \cite{CY17} involves some explicit construction of certain unitary matrix ${\bf U}_\zeta\in U(n+1)$ depending on $\zeta$ and this would give the desired family of holomorphic isometries $f_{\zeta}$.
\end{rem}
This also yields the following interesting corollary. We will come back to this corollary in a more explicit form later.
\begin{cor}[Chan-Yuan \cite{CY17}]\label{Cor:ExHolo_n=2}
Let $f_\zeta=(f_\zeta^1,f_{\zeta}^{2,1},f_{\zeta}^{2,2}):(\Delta,g_\Delta)\to (\Delta,g_\Delta)\times (\mathbb B^2,g_{\mathbb B^2})$ be the holomorphic isometry obtained in Proposition \ref{Pro:Ex_1PFICHIndim}.
Then, for $0<|\zeta|<{1\over 3}$, $f_\zeta$ is non-standard and extends holomorphically to a neighborhood of the closed unit disk $\overline\Delta$  with $f^1_\zeta(w), f^{2,1}_\zeta(w), f_{\zeta}^{2,2}(w)$ all irrational functions in $w\in \Delta\subset \mathbb C$.
\end{cor}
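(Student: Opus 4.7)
The plan is to start from the explicit unitary matrix $\mathbf{U}_\zeta\in U(3)$ that produces $f_\zeta$ in the proof of Proposition \ref{Pro:Ex_1PFICHIndim}, and combine it with the characterization in Proposition \ref{Thm_Existence1} and Theorem \ref{Thm:Red_Para1}. By the normalization following Proposition \ref{Thm_Existence1}, the first component $f_\zeta^1$ satisfies $R_\zeta(f_\zeta^1(w))=w$ on $\Delta$, where the rational function $R_\zeta$ has the form
\[
R_\zeta(z) \;=\; \alpha_0\, z\,\frac{(z-1/\overline{\alpha_1})(z-1/\overline{\alpha_2})}{(z-\alpha_1)(z-\alpha_2)},\qquad |\alpha_0|=1,\ \alpha_1,\alpha_2\in\Delta\smallsetminus\{0\},
\]
with $\alpha_0,\alpha_1,\alpha_2$ explicit functions of $\zeta$; in particular, $R_\zeta$ is a Blaschke product of degree $3$. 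The other components take the form $f_\zeta^{2,j}=R_{2,j}\circ f_\zeta^1$ for non-constant rational functions $R_{2,j}$ whose poles are among $\{\alpha_1,\alpha_2\}\subset\Delta$ (as given by the construction via $\mathbf{U}_\zeta$).

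Given this, the non-standard and irrationality claims are immediate. A totally geodesic holomorphic isometry $(\Delta,g_\Delta)\to(\Delta,g_\Delta)\times(\mathbb{B}^2,g_{\mathbb{B}^2})$ would factor through a totally geodesic disk of the target, forcing $f_\zeta^1\in\mathrm{Aut}(\Delta)$ and hence $\deg R_\zeta=1$, contradicting $\deg R_\zeta=3$; hence $f_\zeta$ is non-standard. If $f_\zeta^1=P/Q$ were rational, then $R_\zeta\circ(P/Q)=\mathrm{id}$ would force $R_\zeta$ to be the inverse of a M\"obius transformation, again contradicting $\deg R_\zeta=3$. Then $f_\zeta^{2,j}=R_{2,j}\circ f_\zeta^1$ is irrational as well, since composing an irrational function with a non-constant rational function preserves irrationality.

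The remaining task, which I expect to be the main obstacle, is the holomorphic extension across $\partial\Delta$. Since $R_\zeta$ is a Blaschke product of degree $3$, it restricts to proper $3$-to-$1$ branched coverings $\Delta\to\Delta$ and $\partial\Delta\to\partial\Delta$. The branch $f_\zeta^1$ of $R_\zeta^{-1}$ extends holomorphically to a neighborhood of any $w_0\in\partial\Delta$ if and only if $R_\zeta'(z_0)\neq 0$ at the corresponding boundary preimage $z_0$. I would therefore verify by direct computation that for $0<|\zeta|<\tfrac13$ no critical point of $R_\zeta$ lies on $\partial\Delta$: the equation $R_\zeta'(z)=0$ clears to a polynomial identity of degree $4$ in $z$ whose roots are paired by $z\mapsto 1/\overline{z}$ (by Blaschke symmetry), so generically two lie inside $\Delta$ and two outside. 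The goal is to show that, as $\zeta$ varies in $0<|\zeta|<\tfrac13$, the two interior roots stay strictly inside $\Delta$; the bound $|\zeta|<\tfrac13$ will come out precisely from the critical-point computation together with the constraints on $\alpha_1(\zeta),\alpha_2(\zeta)$ imposed by the unitarity of $\mathbf{U}_\zeta$. Once this is established, since the poles $\alpha_1(\zeta),\alpha_2(\zeta)$ already lie in $\Delta$, analytic continuation of $f_\zeta^1$ extends it to a neighborhood of $\overline{\Delta}$; the components $f_\zeta^{2,j}=R_{2,j}\circ f_\zeta^1$ inherit the same extension because the poles of $R_{2,j}$ lie strictly inside $\Delta$.
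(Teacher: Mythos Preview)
The paper does not prove this corollary; it is quoted from \cite{CY17}, and in Section~3 the explicit matrix $\mathbf U_\zeta$ is used together with Corollary~\ref{Cor:ExHolo_n=2} as a black box. So there is no in-paper argument to compare against, but your outline has two genuine gaps.

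First, $R_\zeta$ is \emph{not} a Blaschke product. In the general form recorded just before Theorem~\ref{Thm_Reduction1} one has $\alpha_j\in\overline\Delta\smallsetminus\{0\}$ and $\alpha_0\in\overline\Delta\smallsetminus\{0\}$, so the poles $\alpha_j$ of $R_\zeta$ lie in the closed disk (generically strictly inside $\Delta$) and $R_\zeta$ does not restrict to a proper map $\Delta\to\Delta$; in particular $R_\zeta^{-1}(\partial\Delta)\supsetneq\partial\Delta$. Your extension argument tacitly assumes that the boundary values $z_0=f_\zeta^1(w_0)$ lie on $\partial\Delta$ and then checks that $R_\zeta$ has no critical point on $\partial\Delta$. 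But since $R_\zeta$ is not a Blaschke product, $f_\zeta^1(\partial\Delta)$ need not sit on $\partial\Delta$, so ruling out critical points on the circle is not sufficient; what is actually needed is that the critical \emph{values} of $R_\zeta$ attached to this particular sheet avoid $\partial\Delta$. The bound $|\zeta|<\tfrac13$ should enter exactly here, but not via the Blaschke mechanism you describe.

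Second, the step ``composing an irrational function with a non-constant rational function preserves irrationality'' is false in general (take $R(z)=z^2$ and $g(w)=\sqrt{w}$). What is true is that, since $f_\zeta^1$ has degree $3$ over $\mathbb C(w)$, one has $R_{2,j}(f_\zeta^1)\in\mathbb C(w)$ only if $R_{2,j}$ factors through $R_\zeta$, i.e.\ $R_{2,j}=S\circ R_\zeta$ for some rational $S$; you would still need to exclude this using the explicit $R_{2,j}$ coming from $\mathbf U_\zeta$. Your arguments that $f_\zeta$ is non-standard and that $f_\zeta^1$ itself is irrational (both via $\deg R_\zeta=3$) are correct.
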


In \cite{CY17}, there are some applications to the study of holomorphic isometries from $(\Delta,g_\Delta)$ into $(\Omega,g_\Omega)$ for any irreducible bounded symmetric domain $\Omega\Subset \mathbb C^N$ in its Harish-Chandra realization of rank $\ge 2$. Actually, from Corollary \ref{Cor:ExHolo_n=2} and Proposition \ref{Pro:Ex_1PFICHIndim}, Chan-Yuan \cite{CY17} constructed many new examples of holomorphic isometries from $(\Delta,g_\Delta)$ into $(\Omega,g_\Omega)$ which have the same properties as the map $f_\zeta$ in Corollary \ref{Cor:ExHolo_n=2} for any irreducible bounded symmetric domain $\Omega\Subset \mathbb C^N$ in its Harish-Chandra realization of rank $\ge 2$ except for the type-$\mathrm{IV}$ domain (cf. \cite[Theorem 3.21]{CY17}).
On the other hand, in the case where the target $\Omega$ is an irreducible bounded symmetric domain of classical type except for the type-$\mathrm{IV}$ domain, Chan-Yuan \cite{CY17} showed the existence of a real $1$-parameter family $\{f_t\}_{t\in \mathbb R}$ of mutually incongruent holomorphic isometries $f_{t}:(\Delta,g_\Delta)\to (\Omega,g_\Omega)$ by using the constructions in Proposition \ref{Pro:Ex_1PFICHIndim} and some extra constructions of holomorphic isometric embeddings from $\Delta\times \mathbb B^n$ into $\Omega$ for some $n\ge 2$.

\subsection{Rigidity of rational holomorphic isometries from the Poincar\'e disk into the product of complex unit balls}
It is natural to ask which condition would force total geodesy of a holomorphic isometry from the Poincar\'e disk into the product of complex unit balls.
For the case where the target is the $p$-disk, Theorem 3 in Mok-Ng \cite[p.\;2637]{MN09} asserts that if $f\in {\bf HI}_k(\Delta,\Delta^p)$ which extends holomorphically to some neighborhood of the closed unit disk $\overline\Delta$, then $f$ is totally geodesic.
However, Chan-Yuan \cite{CY17} observed that Corollary \ref{Cor:ExHolo_n=2} yields an example of non-standard holomorphic isometry from $(\Delta,g_\Delta)$ into $(\Delta,g_\Delta)\times (\mathbb B^n,g_{\mathbb B^n})$, $n\ge 2$, which extends holomorphically to a neighborhood of the closed unit disk $\overline\Delta$.
In particular, this suggests that one should impose some stronger assumptions in order to generalize Theorem 3 in \cite[p.\;2637]{MN09} and obtain the rigidity of a certain class of holomorphic isometries from the Poincar\'e disk into the product of complex unit balls.
More precisely, Chan-Yuan \cite{CY17} obtained the following rigidity theorem for rational holomorphic isometries from the Poincar\'e disk into the product of complex unit balls.
\begin{thm}[Chan-Yuan \cite{CY17}]\label{Thm_ExtHolo}
Let $f=(f_1,\ldots,f_m):(\Delta,g_\Delta) \to (\mathbb B^{N_1},\lambda_1g_{\mathbb B^{N_1}}) \times \cdots \times (\mathbb B^{N_m},\lambda_mg_{\mathbb B^{N_m}})$ be a holomorphic isometry
such that $f_j$ is a non-constant map for $1\le j\le m$,
where $\lambda_j$, $1\le j\le m$, are positive real constants. 
If $f$ is rational, i.e., each component function of $f$ is a rational function in $w\in \Delta\subset \mathbb C$, then $f_j:(\Delta,g_\Delta)\to (\mathbb B^{N_j},g_{\mathbb B^{N_j}})$ is a (totally geodesic) holomorphic isometry for $1\le j\le m$ and $\sum_{j=1}^m\lambda_j=1$ so that $f$ is totally geodesic.
\end{thm}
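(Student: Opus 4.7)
The plan is to extract from the isometry equation a clean polynomial identity in $\mathbb C[z,u]$, and then exploit positivity of the $\lambda_j$ through a divisor-theoretic argument. Composing with (rational) automorphisms of the source and target, I first normalize $f(\mathbf 0)=\mathbf 0$. The isometry condition $\sum_j\lambda_j f_j^*\omega_{\mathbb B^{N_j}}=\omega_\Delta$ then gives
\[
\sum_{j=1}^{m}\lambda_j\log\bigl(1-\|f_j(w)\|^2\bigr)-\log(1-|w|^2)=h(w)+\overline{h(w)}
\]
for some holomorphic $h$; every Taylor term on the left contains both a $w$- and a $\bar w$-factor, so the pure $\partial_w^k$-derivatives at $0$ all vanish and $h\equiv\mathrm{const}$. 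Polarizing by letting $\bar w$ be an independent variable $u$ then yields
\[
\prod_{j=1}^{m}A_j(z,u)^{\lambda_j}=1-zu,\qquad A_j(z,u):=1-\langle f_j(z),\bar f_j(u)\rangle.
\]

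Writing each $f_j=p_j/q_j$ in lowest terms (with $q_j$ nonvanishing on $\Delta$), one has $A_j=N_j/(q_j(z)\bar q_j(u))$ where $N_j=q_j(z)\bar q_j(u)-p_j(z)\cdot\bar p_j(u)$. A direct check using $\gcd(p_j,q_j)=1$ shows that no factor $z-\zeta$ or $u-\eta$ divides $N_j$, so vertical and horizontal lines appear in $\mathrm{div}(A_j)$ only through the denominator, always with a nonpositive coefficient. Taking divisors in the $\mathbb R$-vector space of Weil divisors on $\mathbb C^2$, the identity above becomes $\sum_j\lambda_j\,\mathrm{div}(A_j)=[\{1-zu=0\}]$. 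The right side has no vertical or horizontal component, yet the left side restricted to any such line is a sum of nonpositive terms with positive weights $\lambda_j$; hence every $q_j$ must be constant, every $f_j$ is polynomial, and every $A_j$ lies in $\mathbb C[z,u]$.

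With the $A_j$ now polynomial, every irreducible factor appears with nonnegative multiplicity, so the same divisor identity applied to an irreducible $P\neq 1-zu$ forces $\sum_j\lambda_j n_{j,P}=0$ and hence $n_{j,P}=0$ for all $j$. Therefore $A_j(z,u)=c_j(1-zu)^{n_j}$ with $c_j=A_j(0,0)=1$ and $\sum_j\lambda_j n_j=1$. Setting $u=\bar z$ converts $A_j=(1-zu)^{n_j}$ into $\|f_j(z)\|^2=1-(1-|z|^2)^{n_j}$; writing $f_j(z)=\sum_{k\ge 1}a_k^{(j)}z^k$ with $a_k^{(j)}\in\mathbb C^{N_j}$ and comparing coefficients of $|z|^{2k}$ gives $\|a_k^{(j)}\|^2=(-1)^{k+1}\binom{n_j}{k}$. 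Nonnegativity at $k=2$ forces $\binom{n_j}{2}\le 0$, so $n_j\le 1$, while nonconstancy of $f_j$ rules out $n_j=0$. Hence $n_j=1$, each $f_j(z)=U_j z$ for a unit vector $U_j\in\mathbb C^{N_j}$ (totally geodesic into $(\mathbb B^{N_j},g_{\mathbb B^{N_j}})$), and $\sum_j\lambda_j=1$.

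The main obstacle is the divisor step: because the $\lambda_j$ are real rather than rational, one cannot invoke unique factorization in $\mathbb C[z,u]$ directly. The resolution is the two-stage sign argument above — vertical and horizontal components contribute only nonpositive multiplicities and so must vanish individually under a positive-weight sum, after which the remaining irreducible components contribute only nonnegative multiplicities and the same cancellation forces them to vanish as well.
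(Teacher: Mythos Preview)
The paper itself does not prove this theorem---it is only stated with attribution to \cite{CY17}---so there is no in-paper argument to compare against. Your proof is correct. Two points are worth making explicit for the reader. First, the ``direct check'' that no factor $z-\zeta$ divides $N_j$ genuinely uses the normalization $f_j(0)=\mathbf 0$: evaluating $N_j(\zeta,u)$ at $u=0$ gives $q_j(\zeta)\overline{q_j(0)}$ (since $\tilde p_{j,k}(0)=\overline{p_{j,k}(0)}=0$), which forces $q_j(\zeta)=0$; then setting $u=\bar\zeta$ in $\sum_k p_{j,k}(\zeta)\tilde p_{j,k}(u)\equiv 0$ yields $\|p_j(\zeta)\|^2=0$, contradicting $\gcd(p_j,q_j)=1$. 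Without $f_j(0)=\mathbf 0$ one would only get $\|f_j(\zeta)\|=1$, which is not by itself a contradiction. Second, the passage from $\prod_j A_j^{\lambda_j}=1-zu$ to the $\mathbb R$-divisor identity $\sum_j\lambda_j\,\mathrm{div}(A_j)=[\{1-zu=0\}]$ is cleanly justified by differentiating to obtain the rational $1$-form identity $\sum_j\lambda_j\,d\log A_j=d\log(1-zu)$, which holds on an open set and hence globally, and then reading off residues along each irreducible curve; this sidesteps any multivaluedness issue with real exponents. With these clarifications your two-stage sign argument (first kill vertical and horizontal components, then all other irreducible factors) goes through, and the endgame via $\|a_2^{(j)}\|^2=-\binom{n_j}{2}\ge 0$ is standard.
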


\section{Proper holomorphic map from the unit disk to the complex unit ball}
We first recall some facts about proper holomorphic maps between complex unit balls $\mathbb{B}^n$ and $\mathbb{B}^N$. It is well-known that any proper holomorphic map from the unit disk $\Delta \subset \mathbb{C}$ to itself must be a finite Blaschke product. By a classical result of Alexander \cite{Al74}, any proper holomorphic self-map of $\mathbb{B}^n, n \geq 2,$ is a holomorphic automorphism.  Moreover, Forstneric \cite{Fo89} proved that a proper holomorphic
map from $\mathbb{B}^n$ to $\mathbb{B}^N, N \geq n \geq 2$ must be rational if it extends smoothly to the unit sphere. The following results reveal a different phenomenon of proper maps from the unit disk to balls. In \cite{DHX16}, D'Angelo-Huo-Xiao proved the following result. 

\begin{pro}[Proposition 2.1 in \cite{DHX16}]
For each $N \geq 2,$ there is a non-algebraic proper holomorphic map from $\Delta$ to $\mathbb{B}^N$ that extends holomorphically to a neighborhood of the closed unit disk.
\end{pro}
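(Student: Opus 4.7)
The plan is to reduce to $N=2$ and construct a non-algebraic proper holomorphic map $f_0=(B,C):\Delta\to \mathbb B^2$ extending holomorphically to a neighborhood of $\overline\Delta$ via a Szeg\H{o}-type factorization; for general $N\ge 2$, the map $(B,C,0,\ldots,0):\Delta\to \mathbb B^N$ then inherits all required properties (properness from $|f|^2=|B|^2+|C|^2\to 1$ at the boundary, holomorphic extension from that of $B$ and $C$, non-algebraicity from that of $B$). I would take $B(z)=\tfrac{1}{2}e^{z/2}$, a transcendental entire function with $\|B\|_{L^\infty(\overline\Delta)}<1$. The remaining task is to produce a holomorphic function $C$ on a neighborhood of $\overline\Delta$ satisfying $|C(e^{i\theta})|^2=1-|B(e^{i\theta})|^2$ on $\partial\Delta$.

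To construct $C$, set $B^*(z):=\overline{B(1/\bar z)}$, which is holomorphic for $|z|>1/R$ whenever $B$ is holomorphic for $|z|<R$, and let $H(z):=B(z)B^*(z)$. Then $H$ is holomorphic on an annulus $A$ around $\partial\Delta$ and $H=|B|^2$ on $\partial\Delta$. Since $1-H$ takes values in $(0,1]$ on $\partial\Delta$, after shrinking $A$ the logarithm $L(z):=\log(1-H(z))$ is single-valued and holomorphic on $A$. Writing the Laurent expansion $L(z)=\sum_{n\in \mathbb Z}\ell_n z^n$, the identity $\overline{L(1/\bar z)}=L(z)$ forces $\overline{\ell_{-n}}=\ell_n$. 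I then define
\[ C(z):=\exp\left(\tfrac{\ell_0}{2}+\sum_{n=1}^\infty \ell_n z^n\right), \]
holomorphic on $|z|<R$. A direct computation using $\overline{\ell_n}=\ell_{-n}$ gives $|C(z)|^2=e^{L(z)}=1-|B(z)|^2$ on $\partial\Delta$.

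With $B$ and $C$ in hand, $f_0=(B,C)$ extends holomorphically to a neighborhood of $\overline\Delta$ and satisfies $|f_0|^2=1$ on $\partial\Delta$. The function $|f_0|^2$ is subharmonic on $\Delta$ with boundary values identically equal to $1$; since $f_0$ is nonconstant (as $B$ is), the maximum principle forces $|f_0(z)|^2<1$ on $\Delta$ and $|f_0(z)|\to 1$ as $z\to\partial\Delta$, giving properness. Non-algebraicity is immediate from the transcendence of $B$. The main obstacle in this plan is producing $C$ with a holomorphic extension across $\partial\Delta$: it hinges on (i) the holomorphic extension of $|B|^2$ to an annular neighborhood of $\partial\Delta$ afforded by $H=B\cdot B^*$, and (ii) the strict positivity of $1-|B|^2$ on $\partial\Delta$, which together make $\log(1-H)$ a single-valued holomorphic function on an annulus. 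Once this logarithm is available, splitting its Laurent series into non-negative and negative frequency parts yields the desired factorization $1-B\cdot B^*=C\cdot C^*$ with $C$ holomorphic on a neighborhood of $\overline\Delta$.
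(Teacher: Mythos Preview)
Your construction is correct. The paper itself does not supply a proof of this proposition; it merely quotes the result from \cite{DHX16} and then moves on to the next proposition (the existence of an \emph{algebraic}, non-rational example), which is proved by an entirely different mechanism via Corollary~\ref{Cor:ExHolo_n=2}. So there is no proof in the present paper to compare against.

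For the record, your argument is a clean Szeg\H{o}/Fej\'er--Riesz style factorization: pick any transcendental entire $B$ with $\sup_{\overline\Delta}|B|<1$, pass to $H=B\cdot B^*$ holomorphic on an annulus about $\partial\Delta$ with $H=|B|^2$ on $\partial\Delta$, note that $1-H$ is real and positive on $\partial\Delta$ so $\log(1-H)$ has a single-valued branch on a thin annulus (the image of $\partial\Delta$ under $1-H$ lies in $(0,1]$ and does not wind around $0$), and then split the Laurent series of $\log(1-H)$ to manufacture $C$ holomorphic on a disk of radius $>1$ with $|C|^2=1-|B|^2$ on $\partial\Delta$. The only point worth making explicit is the single-valuedness of the logarithm on the annulus, which you state but do not justify; the reason is precisely that $1-H$ is real and positive on $\partial\Delta$, so the increment of the argument around the circle vanishes. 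Everything else---properness from the maximum principle for $|B|^2+|C|^2$, non-algebraicity from the transcendence of $B$, and the padding with zeros for $N>2$---is as you say.
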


We observed the following result from Corollary \ref{Cor:ExHolo_n=2} and we give a complete description here.
The existence of such examples seems to be unknown to the best knowledge of the authors'.

\begin{pro}
For each $N \geq 2,$ there is an algebraic and non-rational proper holomorphic map from $\Delta$ to $\mathbb{B}^N$ that extends holomorphically to a neighborhood of the closed unit disk $\overline{\Delta}$.
\end{pro}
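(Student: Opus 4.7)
The plan is to extract the required proper map directly from Corollary \ref{Cor:ExHolo_n=2}. Fix $\zeta$ with $0<|\zeta|<1/3$ and let $f_\zeta=(f_\zeta^1,f_\zeta^{2,1},f_\zeta^{2,2}):(\Delta,g_\Delta)\to(\Delta,g_\Delta)\times(\mathbb{B}^2,g_{\mathbb{B}^2})$ be the non-standard holomorphic isometry provided by that corollary, whose three components are irrational algebraic functions that extend holomorphically to some neighborhood of $\overline{\Delta}$. I will verify that
\[F_\zeta(w):=(f_\zeta^{2,1}(w),f_\zeta^{2,2}(w))\]
is already a proper holomorphic map $\Delta\to\mathbb{B}^2$ with all desired features for $N=2$; the cases $N\geq 3$ will then follow by padding with zeros. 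The crucial identity is the isometric functional equation
\[(1-|f_\zeta^1(w)|^2)(1-|f_\zeta^{2,1}(w)|^2-|f_\zeta^{2,2}(w)|^2)=1-|w|^2,\]
which on $\partial\Delta$ forces at least one of the two factors on the left to vanish at every point. My strategy is thus to show $|f_\zeta^1|<1$ on $\partial\Delta$ outside a finite set, whence $|f_\zeta^{2,1}|^2+|f_\zeta^{2,2}|^2=1$ on a dense subset of $\partial\Delta$ and, by continuity of $F_\zeta$ up to $\overline{\Delta}$, on all of $\partial\Delta$.

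The main obstacle is exactly this claim: the set $E:=\{w\in\partial\Delta:|f_\zeta^1(w)|=1\}$ is finite. I would argue in two steps. First, since $f_\zeta^1$ is holomorphic on a neighborhood of $\overline{\Delta}$, the map $\theta\mapsto 1-|f_\zeta^1(e^{i\theta})|^2$ is real-analytic on $\mathbb{R}$, so its zero set in $\partial\Delta$ is either all of $\partial\Delta$ or a finite set. Second, the former alternative would make $f_\zeta^1$ an inner function holomorphic on a neighborhood of $\overline{\Delta}$, in which case the Schwarz reflection $w\mapsto 1/\overline{f_\zeta^1(1/\bar{w})}$ extends $f_\zeta^1$ to a meromorphic function on $\mathbb{P}^1$, forcing $f_\zeta^1$ to be rational; this contradicts the irrationality of $f_\zeta^1$ asserted in Corollary \ref{Cor:ExHolo_n=2}. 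Hence $E$ is finite, and properness of $F_\zeta:\Delta\to\mathbb{B}^2$ follows.

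Once properness is in hand, the remaining properties come for free. The map $F_\zeta$ is algebraic since its components are, and it is non-rational because $f_\zeta^{2,1}$ and $f_\zeta^{2,2}$ are irrational by the same corollary; its holomorphic extension to a neighborhood of $\overline{\Delta}$ is inherited from $f_\zeta$. For any $N\geq 3$, the padded map
\[(f_\zeta^{2,1},f_\zeta^{2,2},0,\ldots,0):\Delta\to\mathbb{B}^N\]
retains all four required properties (proper, algebraic, non-rational, and holomorphic on a neighborhood of $\overline{\Delta}$), which completes the construction.
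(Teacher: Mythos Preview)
Your proposal is correct and follows essentially the same route as the paper's proof: extract $F_\zeta=(f_\zeta^{2,1},f_\zeta^{2,2})$ from the isometry of Corollary~\ref{Cor:ExHolo_n=2}, rule out $|f_\zeta^1|\equiv 1$ on $\partial\Delta$ by the irrationality of $f_\zeta^1$, conclude $F_\zeta$ is proper, and pad with zeros for $N\ge 3$. The only cosmetic differences are that the paper writes down the explicit unitary matrix ${\bf U}_\zeta$ and phrases the dichotomy via an open arc plus the identity theorem (concluding $f_\zeta^1$ would be a finite Blaschke product), whereas you invoke real-analyticity of $\theta\mapsto 1-|f_\zeta^1(e^{i\theta})|^2$ and Schwarz reflection; these are equivalent formulations of the same argument.
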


\begin{proof}
Define
\[ {\bf U}_{\zeta}:=\begin{pmatrix}
-\overline{\zeta}^2 & -\sqrt{1-|\zeta|^2} & \overline{\zeta} \sqrt{1-|\zeta|^2} \\
- \sqrt{1-|\zeta|^2} \cdot \overline{\zeta} & \zeta & 1-|\zeta|^2 \\
\sqrt{1-|\zeta|^2} & 0 & \zeta
\end{pmatrix}\in U(3) \] for $0 \leq |\zeta| < 1/3$ and let $f_\zeta = (f_\zeta^1, f^{2, 1}_\zeta, f^{2, 2}_\zeta)$ be the holomorphic isometry satisfying $${\bf U_\zeta} \begin{pmatrix}
f^1_\zeta(w),f^{2,1}_\zeta(w), f^{2,2}_\zeta(w)
\end{pmatrix}^t
= \begin{pmatrix}
w,f^1_\zeta(w)  f^{2,1}_\zeta(w), f^1_\zeta(w) f^{2,2}_\zeta(w)
\end{pmatrix}^t,$$ or equivalently $$(1-|f^1_\zeta(w)|^2) \left(1-\sum_{j=1}^2 |f^{2,j}_\zeta(w)|^2\right) = 1-|w|^2.
$$
The existence and uniqueness of $f_\zeta$ is guaranteed by Proposition \ref{Thm_Existence1}. It then follows from Corollary \ref{Cor:ExHolo_n=2} that $f_\zeta$ is a non-standard, proper holomorphic map from $\Delta$ to $\Delta \times \mathbb{B}^2$ with $f_\zeta^1, f_\zeta^{2, 1}, f_\zeta^{2, 2}$ irrational and $f_\zeta$ extends
holomorphically to a neighborhood of the closed unit disk. On the other hand, there exists a nonempty open arc $A \subset \partial\Delta$ such that $f^1_\zeta(A) \subset \partial\Delta$ or $(f^{2, 1}_\zeta, f^{2, 2}_\zeta)(A) \subset \partial\mathbb{B}^2$. If the former holds, then by the identity theorem of analytic functions, $f^1_\zeta (\partial\Delta) \subset \partial\Delta$. Therefore $f^1_\zeta$ must be a finite Blaschke product as it is a proper holomorphic self map of $\Delta$. This is a contradiction as $f^1_\zeta$ is irrational according to the construction. This implies $(f^{2, 1}_\zeta, f^{2, 2}_\zeta)(A) \subset \partial\mathbb{B}^2$. By the the identity theorem of analytic functions again, $(f^{2, 1}_\zeta, f^{2, 2}_\zeta)$ is a proper holomorphic map from $\Delta$ to $\mathbb{B}^2$. It is the desired example when $N=2$. For $N\ge 2$, by composing $(f^{2, 1}_\zeta, f^{2, 2}_\zeta)$ with the standard embedding $\mathbb B^2 \hookrightarrow \mathbb B^N$, $(z_1,z_2)\mapsto (z_1,z_2,0,\ldots,0)$, we obtain the desired proper holomorphic map from $\Delta$ to $\mathbb{B}^N$.
\end{proof}

\section{Miscellaneous results on holomorphic isometries}

We provide some new results on holomorphic isometries in this section. 
First recall some facts about the Bergman kernels of irreducible bounded symmetric spaces.
Irreducible bounded symmetric domains  are classified into four types of Cartan's classical domains and two exceptional cases. Let $D$ be an irreducible bounded symmetric domain of any type in $\mathbb{C}^n, n \geq 2$. Denote by $K_D$ the Bergman kernel of $D.$ Then $K_D$ always has the following form (after a linear change of coordinates for type III):
$$K_D(Z,\overline Z)=c_D \left(1- \lVert Z\rVert^2+ Q_D(Z, \overline{Z})\right)^{m}.$$
Here $c_D \in \mathbb{R}, m \in \mathbb{Z}$ are  constants depending on the type and the dimension $n$ with $m<0$.
Here $Q_D(Z, \overline{Z})$ is a real polynomial in $Z, \overline{Z}$ with $Q_D(Z, \overline{Z})=o(\lVert Z\rVert^2).$ More precisely, $Q_D$ consists only of terms of the form $Z^{\alpha} \overline{Z}^{\beta}$ with $|\alpha| \geq 2, |\beta| \geq 2.$ For more details on the Bergman kernels of bounded symmetric domains, see \cite{H79}, \cite{Xu07}, \cite{XY16b}, etc. To make it more concrete to the readers, we give the following examples of type I and type IV.

\begin{eg}
Let $q \geq p$. Let $I_p$ be the $p \times p$ identity matrix.
The type I domain is defined by
$$D^I_{p, q} = \{Z \in M(p, q; \mathbb{C}) | I_p -  Z \overline{Z}^t >0 \}$$ and
the Bergman kernel is given by 
$$K_{D^I_{p, q}}(Z, \bar Z) = c_I \left(\det(I_p -  Z \overline{Z}^t) \right)^{-(p+q)},$$
for some positive constant $c_I$ depending on $p,q.$ We denote by $Z(\begin{matrix}
 i_{1} & ... & i_{k} \\
 j_{1} & ... & j_{k}
   \end{matrix}
)$ the determinant of the submatrix of $Z$ formed by its $i_{1}^{\text{th}},...,i_{k}^{\text{th}}$ rows and $j_{1}^{\text{th}},...,j_{k}^{\text{th}}$ columns. Then
\begin{equation}\notag
\mathrm{det}( I_{p}- Z\overline{Z^t})= 1 + \sum_{k=1}^p (-1)^k\left( \sum_{1 \leq i_{1}<i_{2}<...< i_{k}\leq p,\\ 1 \leq j_{1}< j_{2}<...< j_{k} \leq q}\left|
 Z(\begin{matrix}
 i_{1} & ... & i_{k} \\
 j_{1} & ... & j_{k}
   \end{matrix}
)\right|^2\right).
\end{equation}
\end{eg}

\begin{eg}
The type IV domain is defined by 
$$D^{IV}_n = \left\{Z =(z_1, \cdots, z_n) \in \mathbb{C}^n \;\middle|\; Z \overline{Z}^t <2 ~\text{and} ~ 1- Z \overline{Z}^t + \frac{1}{4} |Z Z^t|^2 >0 \right\}$$
 and the Bergman kernel is given by
$$K_{D^{IV}_n}(Z, \bar Z) = c_{IV} \left( 1 -  Z\overline{Z}^t +\frac{1}{4} |Z Z^t|^2 \right)^{-n}$$
for some positive constant $c_{IV}$ depending on $n.$

\end{eg}

In general when $D$ is reducible, write $D=D_1 \times \cdots \times D_k$, where each $D_j$ is an  irreducible bounded symmetric domain in $\mathbb{C}^{n_j}$. $D$ is equipped with the metric $\oplus_{i=1}^k \lambda_i g_{D_i},$ with $\lambda_j >0,$ written as
$D=(D_1, \lambda_1 g_{D_1}) \times \cdots \times (D_k, \lambda_k g_{D_k}).$ Let $\Omega=(\Omega_1, \mu_1 g_{\Omega_1}) \times \cdots \times (\Omega_l, \mu_l g_{\Omega_l}),$ with $\mu_i > 0, $  be another bounded symmetric domain, where each $\Omega_i$ is an irreducible bounded symmetric domain in $\mathbb{C}^{N_i}.$ Let $U$ be a connected open subset of $D, H=(H_1, \cdots, H_l)$ be a holomorphic map from $U$ to $\Omega,$ where $H_i: U \rightarrow \Omega_i.$ We say $H$ is a local holomorphic isometry or locally preserves the canonical metric if the following identity holds:
\begin{equation}\label{hi}
\sum_{i=1}^l \mu_i H_i^{*}g_{\Omega_i}=\bigoplus_{j=1}^k \lambda_j g_{D_j} ~\text{on}~U.
\end{equation}

We provide the following algebraicity result of local holomorphic isometries between bounded symmetric domains in the sense of equation (\ref{hi}). It can be regarded as a generalization of Mok's algebraicity result \cite{Mok12}. We believe that Theorem \ref{Thm:Algebraicity} is known to experts, but we still record it here for the sake of completeness.

\begin{thm}\label{Thm:Algebraicity}
Let $H$ be a holomorphic map from $U \subset D$ to $\Omega$ satisfying equation (\ref{hi}). Then $H$ is algebraic and extends to a proper holomorphic isometric immersion from $D$ to $\Omega.$
\end{thm}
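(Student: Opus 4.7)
The plan is to convert the metric identity (\ref{hi}) into a functional equation of Bergman kernels and then polarize, following Mok's framework in \cite{Mok12}. First I would recall that on each irreducible bounded symmetric domain $D_j$ (resp.\ $\Omega_i$) the canonical K\"ahler-Einstein metric is a positive constant multiple of the Bergman metric, so that $g_{D_j}=\kappa_j\,\partial\bar\partial\log K_{D_j}$ and $g_{\Omega_i}=\nu_i\,\partial\bar\partial\log K_{\Omega_i}$ for some positive constants $\kappa_j$ and $\nu_i$. Substituting these into (\ref{hi}) and applying the $\partial\bar\partial$-lemma on a simply connected subdomain $U'\subset U$ yields
\[
\prod_{i=1}^l K_{\Omega_i}\bigl(H_i(z),\overline{H_i(z)}\bigr)^{\mu_i\nu_i}\;=\;|\phi(z)|^2\prod_{j=1}^k K_{D_j}\bigl(z_j,\overline{z_j}\bigr)^{\lambda_j\kappa_j}
\]
on $U'$, for some nowhere vanishing holomorphic function $\phi$ on $U'$.

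Next I would exploit the explicit structure $K=c\,N^m$ of the Bergman kernel of an irreducible bounded symmetric domain, where $N$ is a real polynomial and $m$ is a negative integer. After raising both sides of the identity above to a suitable positive integer power to clear all exponents (arguing as in \cite{Mok12} that the constants $\mu_i,\lambda_j$ can be normalized so that every resulting exponent is a positive integer), the identity becomes
\[
\prod_{i=1}^l N_{\Omega_i}\bigl(H_i(z),\overline{H_i(z)}\bigr)^{p_i}\;=\;C\,|\Psi(z)|^2\prod_{j=1}^k N_{D_j}\bigl(z_j,\overline{z_j}\bigr)^{q_j},
\]
with positive integers $p_i,q_j$, a constant $C>0$, and a holomorphic function $\Psi$ on $U'$. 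Since both sides are real analytic in $(z,\bar z)$, polarization in a small product neighborhood is valid: replacing $\bar z$ with an independent complex variable $w$ gives the holomorphic identity
\[
\prod_{i=1}^l N_{\Omega_i}\bigl(H_i(z),\overline{H_i}(w)\bigr)^{p_i}\;=\;C\,\Psi(z)\,\overline{\Psi}(w)\prod_{j=1}^k N_{D_j}\bigl(z_j,\bar w_j\bigr)^{q_j},
\]
where $\overline{H_i}(w):=\overline{H_i(\bar w)}$ and $\overline{\Psi}(w):=\overline{\Psi(\bar w)}$ are holomorphic in $w$. To extract algebraicity, I would fix two generic values $w^{(0)},w^{(1)}$ and form the quotient of the two resulting identities; this eliminates $\Psi(z)$ and yields a nontrivial polynomial relation $P\bigl(z,H(z)\bigr)=0$ whose coefficients are polynomials in $z$ depending on the constants $\overline{H_i}(w^{(0)}),\overline{H_i}(w^{(1)}),\overline{\Psi}(w^{(0)}),\overline{\Psi}(w^{(1)})$. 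Letting $w^{(0)},w^{(1)}$ vary produces sufficiently many relations to force each component of $H$ to be algebraic over $\mathbb C(z)$, establishing the first assertion.

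For the extension to a proper holomorphic isometric immersion $D\to\Omega$, I would follow Mok \cite{Mok12}: the algebraicity of $H$ together with its boundedness in the relatively compact $\Omega\Subset\mathbb C^N$ enables analytic continuation of $H$ along paths in $D$ as a (possibly multivalued) algebraic correspondence whose branch locus lies in a proper analytic subvariety $Z\subsetneq D$; the isometric condition (\ref{hi}) propagates uniquely along any path avoiding $Z$ and singles out a distinguished single-valued branch, which extends holomorphically across $Z$ by Riemann extension because $H$ is bounded; finally, the completeness of the source metric $\bigoplus_j\lambda_j g_{D_j}$ on $D$ together with the isometric pullback identity forces $H(z)\to\partial\Omega$ whenever $z\to\partial D$, giving properness. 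The main obstacle is precisely this last step of controlling the global monodromy of the algebraic correspondence and verifying that the initial isometric germ lifts canonically to a single-valued immersion; this is handled by Mok's technique of propagating the isometric germ along minimal disks in $D$ and invoking the rigidity of holomorphic isometries of the Poincar\'e disk to guarantee consistency of the extension across $Z$.
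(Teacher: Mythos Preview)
Your proposal has a genuine gap in the algebraicity step, and takes an unnecessarily complicated route for the extension step.

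\textbf{The gap.} You assert that ``arguing as in \cite{Mok12}'' one may normalize the constants $\mu_i,\lambda_j$ so that all exponents in the kernel identity become positive integers, after which polarization and the quotient trick at two values of $w$ produce a polynomial relation $P(z,H(z))=0$. But the theorem is stated for \emph{arbitrary positive real} conformal constants $\mu_i,\lambda_j$; nothing in the hypotheses forces them to be rational, and Mok's rationality arguments in \cite{Mok12} do not apply in this product setting with independent weights on each factor. If some $\mu_i$ is irrational, you cannot clear the exponents by raising to an integer power, the polarized identity involves multivalued powers rather than polynomials, and your quotient of two specializations does not yield a polynomial relation. The paper avoids this obstruction entirely: after normalizing $H({\bf 0})={\bf 0}$ to obtain the functional equation
\[
\prod_{i=1}^l\bigl(1-\lVert H_i\rVert^2+P_i(H_i,\overline{H_i})\bigr)^{\mu_i}=\prod_{j=1}^k\bigl(1-\lVert Z_j\rVert^2+Q_j(Z_j,\overline{Z_j})\bigr)^{\lambda_j},
\]
it polarizes and then takes a \emph{logarithmic derivative} in each $z_\delta$. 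This brings the exponents $\mu_i$ down as multiplicative coefficients, so the resulting identity is rational in $H,\overline H$ and in $Z,W$ regardless of whether $\mu_i,\lambda_j$ are rational. Iterated differentiation then feeds into the algebraicity machinery of Huang--Yuan \cite[Proposition~3.1]{HY14} (see also \cite{FHX16}). This is the key technical point distinguishing the present generality from Mok's original setting, and your proposal misses it.

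\textbf{The extension.} Your argument via algebraic continuation, monodromy control along minimal disks, and Riemann extension across the branch locus can be made to work, but it is substantially heavier than what the paper does. The paper simply observes that $D$ is simply connected and that $\Omega$ (with its Bergman metric) embeds holomorphically and isometrically into $\mathbb P^\infty$, so Calabi's classical extension theorem \cite{Ca53} applies directly to extend the local isometric germ to a global holomorphic isometric immersion $D\to\Omega$. No monodromy analysis is needed. Properness then follows, as you say, from the asymptotic behaviour of the metrics near the boundary.
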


\begin{proof}
For each irreducible factor $D_j,$ as discussed above,  we can write the K\"ahler form $\omega_{D_i}$ associated to the K\"ahler metric $g_{D_i}$ as
$$\omega_{D_j}=-\sqrt{-1} \partial \overline{\partial}\mathrm{log} \left(1-\lVert Z_j\rVert^2+Q_{j}(Z_j, \overline{Z_j}) \right).$$
Here $Z_j$ is the coordinates in $\mathbb{C}^{n_j}$ and $Q_j$ is a real polynomial with the property mentioned above: It consists only of terms of the form $Z_j^{\alpha}\overline{Z_j}^{\beta}$ with $|\alpha| \geq 2, |\beta| \geq 2.$
Similarly, write 
$$\omega_{\Omega_i}=-\sqrt{-1} \partial \overline{\partial}\mathrm{log} \left(1-\lVert \xi_i\rVert^2+P_{i}(\xi_i, \overline{\xi_i}) \right).$$
where $\xi_i$ is the coordinates in $\mathbb{C}^{N_i},$ and $P_i$ satisfies the similar condition as $Q_j$.
By composing with an automorphism of the target, we can assume $H({\bf 0})={\bf 0}.$ Then by the equation (\ref{hi}) and the standard reduction, we have the following functional equation on $U$:

\begin{equation}\label{eqnmetrich}
\prod_{i=1}^l \left(1-\lVert H_i\rVert^2+P_i(H_i, \overline{H}_i) \right)^{\mu_i}=\prod_{j=1}^k \left(1-\lVert Z_j\rVert^2+Q_j(Z_j, \overline{Z}_j) \right)^{\lambda_j}.
\end{equation}
 Write $Z=(Z_1, \cdots, Z_k)=(z_1, \cdots, z_m)$ where $m=n_1 +\cdots+ n_k.$
We can assume that $U$ is the 
ball $B({\bf 0}, r)$ in $\mathbb{C}^m$ centered at ${\bf 0}$ with radius $r$ for some small  $r >0.$ Write $x \cdot y=\sum_{i=1}^n x_i y_i$ for two vectors $x=(x_1, \cdots, x_n), y=(y_1, \cdots, y_n)$ in $\mathbb{C}^n.$ We complexify (\ref{eqnmetrich}) to get the following equation holds for $(Z, W) \in U \times U.$  Here similarly we write $W=(W_1, \cdots, W_k)=(w_1, \cdots, w_m).$

\begin{equation}\label{eqnhw}
\prod_{i=1}^l \left(1- H_i(Z) \cdot \overline{H}_i(W)+P_i(H_i(Z), \overline{H}_i(W)) \right)^{\mu_i}=\prod_{j=1}^k \left(1- Z_j \cdot W_j+Q_j(Z_j, W_j) \right)^{\lambda_j}
\end{equation}

Write $D_{\delta}=\frac{\partial}{\partial z_{\delta}}.$ For each $\alpha=(\alpha_1, \cdots, \alpha_m),$ write $D^{\alpha}=\frac{\partial}{\partial z_1^{\alpha_1} \cdots \partial z_m^{\alpha_m}}.$ We then take logarithmic differentiation to both sides of the above equation and obtain

\begin{equation}\label{eqndeltah}
\sum_{i=1}^l  \frac{\mu_i D_{\delta} (H_i) (Z) \cdot \overline{H_i} (W)+ D_{\delta} P_i(H_i(Z), \overline{H}_i(W))}{1-H_i(Z) \cdot \overline{H}_i(W)+ P_i(H_i(Z), \overline{H}_i(W))}=R_{\delta}(Z, W).
\end{equation}
Here $R_{\delta}$ is the logarithmic differentiation of the right hand side of (\ref{eqnhw}). It is rational in $Z, W.$ If we write
$$\chi=(\chi_1, \cdots, \chi_N)=(\sqrt{\mu_1} H_1, \cdots, \sqrt{\mu_l} H_l),$$
then the equation (\ref{eqndeltah}) can be expanded as
\begin{equation}\label{eqnzwr}
\sum_{i=1}^l D_{\delta}(\chi_i)(Z) \cdot \overline{\chi}(W) + \eta_{\delta}(Z, \overline{\chi}(W))= R_{\delta}(Z, W).
\end{equation}
Here $\eta_{\delta}$ is rational in $\overline{\chi}$ for the fixed $Z$. Moreover, $\eta_{\delta}$ has no constant or linear terms in its Taylor expansion with respect to $\overline{\chi}.$
Keep differentiating (\ref{eqnzwr}), we get for each $\alpha$,
$$\sum_{i=1}^l D^{\alpha}(\chi_i)(Z) \cdot \overline{\chi}(W) + \eta_{\alpha}(Z, \overline{\chi}(W))= R_{\alpha}(Z, W).$$
Here similarly $R_{\alpha}(Z, W)$ is rational in $Z, W$. Moreover, $\eta_{\alpha}$ is rational in $ \overline{\chi}$ for the fixed $Z$ and has no constant or linear terms in its Taylor expansion with respect to $\overline{\chi}.$ Then by a similar argument as in the proof of Proposition 3.1 in \cite{HY14} (See also \cite{FHX16}), we can show that $\chi(W),$ and thus $H(W)$ is algebraic in $W.$ Next since $D$ is simply connected, and $\Omega$ can be holomorphically isometrically embedded into $\mathbb{P}^{\infty},$ it follows from the classical result of Calabi \cite{Ca53} that $H$ extends holomorphically to  $D.$ By the identity theorem of analytic functions, $H$ preserves the equation (\ref{hi}) everywhere. Moreover, $H$ must be proper by comparing the asymptotic behavior of the K\"ahler metrics near the boundary. 
\end{proof}

\bigskip

We now focus on the case where $D$ and $\Omega$ are products of unit balls. This is motivated by the corresponding result of Ng in \cite{Ng10} on holomorphic isometry between polydisks (cf. Theorem \ref{Ngpoly}).
Denote by $D=(\mathbb{B}^{n_1}, \lambda_1 g_{\mathbb{B}^{n_1}}) \times \cdots \times (\mathbb{B}^{n_k}, \lambda_k g_{\mathbb{B}^{n_k}})$ the Cartesian product of the complex unit balls of possibly different dimensions equipped with the metric $\oplus_{j=1}^k \lambda_j g_{\mathbb{B}^{n_j}}.$ Let $U$ be a connected open subset of $D$. Let $F=(F_1, \cdots, F_l)$ be a holomorphic map from $U$ to $\Omega=(\mathbb{B}^{N_1}, \mu_1 g_{\mathbb{B}^{N_1}}) \times \cdots \times (\mathbb{B}^{N_l}, \mu_l g_{\mathbb{B}^{N_l}})$, where $F_i: U \rightarrow \mathbb{B}^{N_i}$.

\begin{thm}
Let $F=(F_1, \cdots, F_l): U \rightarrow \Omega$ be a holomorphic map preserving the canonical metric in the following sense:
$$\sum_{i=1}^l  \mu_i F_i^{*} g_{\mathbb{B}^{N_i}}=\bigoplus_{j=1}^k \lambda_j g_{\mathbb{B}^{n_j}}~\text{on}~U.$$
Assume that none of the $F_i'$s is constant; all $n_j, N_i$ are at least $2$ and all $\mu_i, \lambda_j$ are positive real constants. Then for each $F_i,$ there exists some $j$ such that $F_i$ only depends on the coordinates of $\mathbb{B}^{n_j}.$ Moreover, $F_i$ extends to a totally geodesic holomorphic isometric embedding from 
$(\mathbb{B}^{n_j},g_{\mathbb{B}^{n_j}})$ to $(\mathbb{B}^{N_i},g_{\mathbb{B}^{N_i}}).$ Consequently, it holds that $\sum_{i=1}^l \mu_i=\sum_{j=1}^k \lambda_j.$
\end{thm}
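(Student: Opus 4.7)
My plan is to combine two ingredients: a holomorphic bisectional curvature argument via the Gauss equation of K\"ahler submanifolds, and the Yuan--Zhang rigidity theorem on holomorphic isometries from a complex unit ball into a product of complex unit balls. As a preliminary, by Theorem \ref{Thm:Algebraicity} applied to the K\"ahler isometric immersion $F$, the map extends algebraically to a proper holomorphic isometric immersion from all of $D$ to $\Omega$, so I may work globally on $D$.

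The first step is to show that each $F_i$ depends on the coordinates of only one factor $\mathbb B^{n_j}$ of the source. I would take $(1,0)$-vectors $\xi \in T_Z\mathbb B^{n_j}$ and $\eta \in T_Z\mathbb B^{n_s}$ with $j\neq s$ and apply the Gauss equation for the K\"ahler isometric immersion $F$, which reads
\[
R^D(\xi,\bar\xi,\eta,\bar\eta) \;=\; R^\Omega\!\left(F_*\xi,\overline{F_*\xi},F_*\eta,\overline{F_*\eta}\right) \;-\; 2\,\lvert\sigma(\xi,\eta)\rvert^2,
\]
where $\sigma$ is the second fundamental form of $F$. The left-hand side vanishes because $\xi$ and $\eta$ are tangent to different factors of the product metric on $D$. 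Since $\Omega$ is a product of rescaled complex unit balls, its holomorphic bisectional curvature is non-positive and decomposes as a sum over the target factors, with each summand strictly negative on any pair of non-zero vectors within a single ball factor. Hence the vanishing of $R^\Omega$ in the formula forces, for each $i$, either $(F_i)_*\xi = 0$ or $(F_i)_*\eta = 0$. As this holds for arbitrary $\xi \in T\mathbb B^{n_j}$ and $\eta \in T\mathbb B^{n_s}$, at each point of $D$ either $\partial F_i/\partial Z_j$ or $\partial F_i/\partial Z_s$ vanishes; the two vanishing loci are analytic subsets of $D$ whose union is all of $D$, so by connectedness one equals $D$. Running this dichotomy over all pairs $j\neq s$ shows each $F_i$ is a holomorphic function of at most one $Z_j$, and since $F_i$ is non-constant, exactly one, say $Z_{j(i)}$.

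The second step is to slice and invoke Yuan--Zhang. Fixing $Z_s = a_s$ for all $s\neq j$, the restriction of $F$ to the slice $\mathbb B^{n_j}$ is a holomorphic isometry from $(\mathbb B^{n_j},\lambda_j g_{\mathbb B^{n_j}})$ into $\Omega$ whose non-constant components are precisely those $F_i$ with $j(i)=j$. Since $n_j \geq 2$, Yuan--Zhang \cite{YZ12} shows this slice isometry is totally geodesic; because totally geodesic holomorphic isometric immersions from a ball into a product of balls split factor-by-factor, each such $F_i$ is, up to ambient automorphisms, a standard totally geodesic isometric embedding $\mathbb B^{n_j}\hookrightarrow \mathbb B^{N_i}$ satisfying $F_i^* g_{\mathbb B^{N_i}} = g_{\mathbb B^{n_j}}$. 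Plugging this back into the slice isometry equation then forces $\sum_{i:\,j(i)=j}\mu_i = \lambda_j$, and summing over $j$ produces $\sum_i\mu_i = \sum_j\lambda_j$.

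The main obstacle I anticipate is the bookkeeping in the curvature step, in particular passing cleanly from the pointwise dichotomy ``$(F_i)_*\xi = 0$ or $(F_i)_*\eta = 0$'' to the global statement that each $F_i$ depends on a single $Z_{j(i)}$; once this reduction is achieved the appeal to Yuan--Zhang is essentially a black box, and the arithmetic of the metric constants is routine.
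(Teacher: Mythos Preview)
Your argument is correct, and it takes a genuinely different route from the paper for the key reduction step. The paper establishes the functional equation $\prod_i (1-\lVert F_i\rVert^2)^{\mu_i}=\prod_j (1-\lVert Z_j\rVert^2)^{\lambda_j}$, uses algebraicity to extend $F$ holomorphically across a generic point of $\partial\mathbb B^{n_1}\times\mathbb B^{n_2}\times\cdots\times\mathbb B^{n_k}$, and then observes that on that boundary piece some $\lVert F_i\rVert\equiv 1$; since the sphere $\partial\mathbb B^{N_i}$ contains no positive-dimensional complex varieties, that $F_i$ must be constant in $Z_2,\ldots,Z_k$. One then slices, applies Yuan--Zhang, cancels $F_i$ from the functional equation, and inducts on $l$. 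Your approach instead stays in the interior and exploits the Gauss equation together with the strict negativity of the holomorphic bisectional curvature of each ball factor: since $R^D(\xi,\bar\xi,\eta,\bar\eta)=0$ for $\xi,\eta$ tangent to distinct source factors, and the right-hand side is a sum of non-positive terms, every summand vanishes, forcing $(F_i)_*\xi=0$ or $(F_i)_*\eta=0$; the analytic-set dichotomy then globalizes this. Both approaches finish identically via slicing and \cite{YZ12}. Your curvature argument is more intrinsic and avoids the boundary extension and the Levi-geometry fact about spheres, at the cost of invoking the Gauss equation machinery (note: the coefficient in front of $|\sigma(\xi,\eta)|^2$ should be $1$ rather than $2$ in the standard K\"ahler convention, but this is immaterial to your sign argument). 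The paper's approach is more elementary in that it only uses the functional equation and a classical CR fact, and its induction on $l$ packages the endgame slightly differently but equivalently to your direct appeal to Yuan--Zhang on each slice.
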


\begin{proof}
Note in the case of $k=1,$ the theorem was proved in \cite{YZ12}. We now assume $k \geq 2$. By composing  with an automorphism of the target, assume $F({\bf 0})={\bf 0}.$
By the metric preserving assumption and the standard reduction, we have the following functional equation:
\begin{equation}\label{eqnmetric}
\prod_{i=1}^l \left(1-\lVert F_i\rVert^2 \right)^{\mu_i}=\prod_{j=1}^k \left(1-\lVert Z_j\rVert^2 \right)^{\lambda_j},
\end{equation}
where $Z_j$ is the coordinates in $\mathbb{C}^{n_j}$.

By Theorem \ref{Thm:Algebraicity}, $F$ is algebraic. Thus $F$ extends holomorphically along a generic path. In particular, we can extend $F$ holomorphically to a neighborhood $V=V_1 \times \cdots \times V_k$ of  some boundary point $p_0 \in \partial \mathbb{B}^{n_1} \times \mathbb{B}^{n_2} \times \cdots \times \mathbb{B}^{n_k}$, where each $V_j$ is an open connected subset of $\mathbb{C}^{n_j}.$  By the identity theorem of analytic functions, we conclude that equation (\ref{eqnmetric}) still holds in $V.$

We then restrict (\ref{eqnmetric}) to $(V_1 \cap \partial \mathbb{B}^{n_1}) \times V_{2} \times \cdots  \times V_{k}.$ Since the right hand side is identically zero,  the left hand side of the equation vanishes identically as well. Consequently, we conclude that there is some component  $F_i$ such that its norm is identically $1$ when restricted to $(V_1 \cap \partial \mathbb{B}^{n_1}) \times V_{2} \times \cdots  \times V_{k}.$ By permuting the order of $N_i'$s if necessary, we call it $F_1.$ This implies $F_1$ maps $(V_1 \cap \partial \mathbb{B}^{n_1}) \times V_{2} \times \cdots  \times V_{k}$ into the unit sphere $\partial \mathbb{B}^{N_1}.$ Note that $\partial \mathbb{B}^{N_1}$ contains no non-trivial complex varieties. Therefore $F_1$ is constant in the variables $Z_2, \cdots, Z_k.$ Namely, $F_1$ only depends on $Z_1.$

We now restrict $F$ to $\mathbb{B}^{n_1} \times \{ 0\} \times \cdots \times \{ 0 \}$, by setting $Z_2=\cdots=Z_k={\bf 0}.$ Then the restriction $\widetilde{F}=(\widetilde{F}_1, \cdots, \widetilde{F}_l)$ of $F$ is a holomorphic isometry from $\mathbb{B}^{n_1}$ to $\mathbb{B}^{N_1} \times \cdots \times \mathbb{B}^{N_l}.$ This reduces to the case $k=1.$ By \cite{YZ12}, each component $\widetilde{F}_i$  of $\widetilde{F}$ is a totally geodesic holomorphic isometry to $\mathbb{B}^{N_i}.$ In particular, $F_1=\widetilde{F}_1$ is a totally geodesic holomorphic isometry from $\mathbb{B}^{n_1}$ to $\mathbb{B}^{N_1}.$ Namely, $1-\lVert F_1\rVert^2=1-\lVert Z_1\rVert^2.$ By comparing the vanishing order of both sides in (\ref{eqnmetric}) at $p_0,$ we have $\mu_1 \leq \lambda_1.$ Moreover, we can eliminate $F_1$ in (\ref{eqnmetric}) to get,
$$\prod_{i=2}^l \left( 1-\lVert F_i\rVert^2 \right)^{\mu_i}=(1-\lVert Z_1\rVert^2)^{\lambda_1-\mu_1}\prod_{j=2}^k \left(1-\lVert Z_j\rVert^2 \right)^{\lambda_j}.$$
Therefore we can prove by induction on $l$ that for each $F_i$, there exists some $j$ such that $F_i$ only depends on the coordinates of $\mathbb{B}^{n_j}.$ Moreover, $F_i$ extends to a totally geodesic holomorphic isometric embedding from $(\mathbb{B}^{n_j},g_{\mathbb{B}^{n_j}})$ to $(\mathbb{B}^{N_i},g_{\mathbb{B}^{N_i}}).$ It is easy to see $\sum_{i=1}^l \mu_i= \sum_{j=1}^k \lambda_j$.
\end{proof}

\bigskip

We now consider the general case of holomorphic isometries from the Poincar\'e disk to polydisks with positive conformal constants. 
Let $f=(f_1,\ldots,f_p):(\Delta,g_\Delta)\to (\Delta,\lambda_1 g_\Delta)\times\cdots \times (\Delta,\lambda_p g_\Delta)$ be a holomorphic isometry, where $p\ge 2$ is an integer and $\lambda_j>0$ is a real constant for $j=1,\ldots,p$. Namely, $$\sum_{j=1}^p \lambda_j f_j^*g_\Delta = g_\Delta.$$
Suppose that $f(0)={\bf 0}$.
Then, we have the functional equation
\[ \prod_{j=1}^p (1-|f_j(w)|^2)^{\lambda_j}
= 1-|w|^2. \]
We observe from the functional equation and the Schwarz lemma that $\sum_{j=1}^p \lambda_j \ge 1$.
In addition, if $\sum_{j=1}^p \lambda_j=1$, then $f$ is the diagonal embedding $w\mapsto(w,\ldots,w)$ up to reparametrizations. In particular, if $p=2$, we provide the following generalization of the classification result of Ng in \cite{Ng10}.

\begin{thm}
Let $f=(f_1,f_2):(\Delta,g_\Delta)\to (\Delta,\lambda_1 g_\Delta)\times (\Delta,\lambda_2 g_\Delta)$ be a holomorphic isometry, where $\lambda_1,\lambda_2>0$ are real constants.
Then, $f$ is one of the following up to reparametrizations:
\begin{enumerate}
\item $f(z)= (z,0)$ with $\lambda_1=1$ and $\lambda_2>0$ arbitrary.
\item $f(z)= (0, z)$ with $\lambda_2=1$ and $\lambda_1>0$ arbitrary.
\item $f(z)=(z,z)$ with $\lambda_1+\lambda_2=1$.
\item $f=(\alpha,\beta):\Delta\to \Delta^2$ is the square root embedding with $\lambda_1=\lambda_2=1$.
\end{enumerate}
\end{thm}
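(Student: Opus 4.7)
After composing with automorphisms of the source and the target bidisk, I would normalize $f(0)=\mathbf 0$; the isometry condition is then equivalent to the functional equation
\[
(1-|f_1(w)|^2)^{\lambda_1}(1-|f_2(w)|^2)^{\lambda_2}\;=\;1-|w|^2 \qquad(\ast)
\]
together with its pointwise differentiated form $\sum_j \lambda_j f_j^*g_\Delta = g_\Delta$. The degenerate cases are immediate: if $f_1\equiv 0$ (symmetrically $f_2\equiv 0$), then $(\ast)$ reduces to $(1-|f_2|^2)^{\lambda_2}=1-|w|^2$; the resulting identity $\lambda_2 f_2^*g_\Delta=g_\Delta$ combined with Schwarz--Pick forces $\lambda_2=1$ and $f_2$ a rotation, yielding case~(2) (with $\lambda_1>0$ free since the vanishing factor drops out of $(\ast)$), and case~(1) by symmetry.

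Now assume both $f_1$ and $f_2$ are non-constant. By Theorem~\ref{Thm:Algebraicity}, $f$ is algebraic, so it extends across $\partial\Delta$ outside a finite branch set. Since $f$ is an isometric immersion of the complete Poincar\'e disk, $d_{\Delta^2}(f(0),f(w))=d_\Delta(0,w)\to\infty$ as $|w|\to 1$, forcing $\max\{|f_1(w)|,|f_2(w)|\}\to 1$. Set $A_j:=\{e^{i\theta}\in\partial\Delta:\lim_{r\to 1^-}|f_j(re^{i\theta})|=1\}$; then $A_1\cup A_2=\partial\Delta$. At a regular boundary point $w_0\in A_j$, Schwarz reflection yields
\[
1-|f_j(w)|^2 \;=\; |f_j'(w_0)|(1-|w|^2)+O((1-|w|^2)^2),
\]
while at $w_0\notin A_j$ the factor $1-|f_j|^2$ stays strictly positive. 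Matching leading powers of $(1-|w|^2)$ in $(\ast)$ then produces
\[
\lambda_1=1 \text{ at }w_0\in A_1\setminus A_2,\quad \lambda_2=1 \text{ at }w_0\in A_2\setminus A_1,\quad \lambda_1+\lambda_2=1 \text{ at }w_0\in A_1\cap A_2.
\]

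Three configurations of $(A_1,A_2)$ cover everything. \emph{If $A_1=A_2=\partial\Delta$}, then $\lambda_1+\lambda_2=1$, and evaluating the pointwise isometric identity at $w=0$ gives $\sum_j\lambda_j|f_j'(0)|^2=1=\lambda_1+\lambda_2$; with $\lambda_j>0$ Schwarz--Pick forces $|f_j'(0)|=1$ for each $j$, so each $f_j$ is a rotation, and absorbing the rotations into an automorphism of the target bidisk gives $f(z)=(z,z)$---case~(3). \emph{If both $A_1\setminus A_2$ and $A_2\setminus A_1$ are nonempty}, then $\lambda_1=\lambda_2=1$, and Ng's Theorem~\ref{Thm_Class_2or3}(1) (whose hypotheses $k=1$, $p=2$, and non-constant components are all met) identifies $f$ with the square-root embedding $F_2$---case~(4). \emph{The remaining possibility $A_1=\partial\Delta$, $A_2\subsetneq\partial\Delta$} (the symmetric one is analogous) forces $\lambda_1=1$, and any point of $A_1\cap A_2$ would give $\lambda_2=0$, so $A_2=\emptyset$, $f_1$ is a finite Blaschke product of degree $d_1\ge 2$ (degree one would make $f_2$ constant), and $|f_2|<1$ on $\overline\Delta$; this configuration must be excluded to finish the proof.

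To rule out this last configuration---the main obstacle of the argument---I would polarize $(\ast)$ (valid since $\lambda_1=1$ makes the $f_1$-factor polynomial) to
\[
(1-f_2(z)\widetilde{f_2}(u))^{\lambda_2}\;=\;\frac{1-zu}{1-f_1(z)\widetilde{f_1}(u)},\qquad \widetilde{f_j}(u):=\overline{f_j(\bar u)},
\]
and exploit the rank-one structure of the bivariate kernel $f_2(z)\widetilde{f_2}(u)$. For the test case $f_1=w^2$, the right-hand side simplifies to $1/(1+zu)$; the rank-one constraint on $f_2(z)\widetilde{f_2}(u)$ forces $f_2$ to be a monomial $az^k$, and comparing coefficients of $|a|^2(zu)^k$ against the binomial expansion $1-(1+zu)^{-1/\lambda_2}=\sum_{n\ge 1}(-1)^{n-1}\binom{-1/\lambda_2}{n}(zu)^n$ has no solution for any $\lambda_2>0$ because the binomial coefficients never vanish off a single degree when $-1/\lambda_2\notin\mathbb Z_{\ge 0}$. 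The same Calabi-type rank obstruction, applied to the full bivariate Taylor expansion of $(1-zu)/(1-f_1(z)\widetilde{f_1}(u))$ at the origin, handles an arbitrary Blaschke $f_1$ of degree $\ge 2$; equivalently, one may verify that the Gaussian curvature of $g_\Delta-f_1^*g_\Delta$ is never a non-positive constant (it equals $+2$ everywhere when $f_1=w^2$, and is non-constant for $f_1=w^d$ with $d\ge 3$), making the required identity $\lambda_2 f_2^*g_\Delta=g_\Delta-f_1^*g_\Delta$ impossible for any $\lambda_2>0$. This Calabi/rank-one analysis is the only technically delicate step on which the proof hinges.
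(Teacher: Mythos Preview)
Your overall architecture matches the paper's: normalize to $f(0)=\mathbf 0$, derive the functional equation, dispose of the degenerate cases, and in the non-constant case analyze which $|f_j|\to 1$ along $\partial\Delta$ to split into three subcases. The treatment of the first two subcases (diagonal via $\lambda_1+\lambda_2=1$ and Schwarz--Pick; square-root via $\lambda_1=\lambda_2=1$ and Ng) is essentially the same as the paper's Cases II(a) and II(b), modulo the small point that one must perturb $w_0$ to ensure $f_j'(w_0)\neq 0$ before reading off vanishing orders (the paper isolates this as a lemma).

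The genuine gap is in your exclusion of the last configuration. You correctly reduce to $\lambda_1=1$, $f_1$ a finite Blaschke product of degree $\ge 2$ with $f_1(0)=0$, and $|f_2|<1$ on $\overline\Delta$; but from there you only carry out the rank/curvature obstruction for $f_1=w^d$, and merely assert that ``the same Calabi-type rank obstruction \ldots\ handles an arbitrary Blaschke $f_1$''. That assertion is exactly the hard part. A Blaschke product of degree $\ge 2$ vanishing at $0$ may have other zeros, and neither your bivariate rank argument nor your curvature computation is shown to survive that generality; in particular the curvature of $g_\Delta-f_1^*g_\Delta$ for a generic Blaschke $f_1$ is not computed, and there is no a priori reason it cannot be a negative constant. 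The paper closes this gap by a different route: it first uses the \emph{polarized} functional equation to show that $f_1$ has \emph{no finite poles} (if $p$ were a pole of $f_1$, sending $w\to p$ in the polarized identity forces $1-f_2(z)\overline{f_2(w)}\to 0$ for generic $z$, hence $f_2$ constant), so that $f_1(z)=cz^k$ after normalization. Only then does the obstruction become clean: one obtains
\[
\bigl(1-|f_2(z)|^2\bigr)^{-\lambda_2}=\frac{1-|z|^{2k}}{1-|z|^2}=1+|z|^2+\cdots+|z|^{2(k-1)},
\]
a \emph{finite} sum of squared moduli, while the left side expands as an infinite such sum whenever $f_2$ is non-constant, and Umehara's uniqueness lemma (or the Liouville argument the paper also records) gives the contradiction. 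Your $f_1=w^2$ computation is in fact a special case of this, but without the no-pole reduction your argument does not cover the general Blaschke product, and the proof is incomplete at precisely the step you flag as ``the only technically delicate'' one.
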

\begin{proof}
First we conclude by Theorem \ref{Thm:Algebraicity} that $f$ is algebraic.  We may suppose without loss of generality that $f(0)={\bf 0}$ and thus
we have the functional equation
\begin{equation}\label{albert}
 \left(1-|f_1(z)|^2\right)^{\lambda_1}\left(1-|f_2(z)|^2\right)^{\lambda_2}=1-|z|^2.
\end{equation}
We proceed in the following different cases.

{\bf Case I:} If one of the $f_1$ and $f_2$ is constant, say $f_2\equiv 0$, then we have $\lambda_1=1$, $\lambda_2>0$ is arbitrary. This shows that $f_1(z)=z$ up to reparametrizations and is included in the case (1) and (2).

{\bf Case II:} Neither of the $f_j, j=1,2, $ is constant.  We first establish the following lemma.
\begin{lem}\label{lemmapre}
Fix $j \in \{1, 2\}.$ Assume $b \in \partial \Delta$ is not a branch point of $f$ and $|f_j(z)|\to 1$ as $z\to b$,  then for any sufficiently closed  boundary point $b' \not= b,$ we know that $b'$ is not a branch point of $f$, and the vanishing order of $1-f_j(z)\overline{f_j(b')}$ at $z=b'$ equals to one.
\end{lem}
{\bf Proof of Lemma \ref{lemmapre}:} We note that there are only finitely many branch points of $f$ on $\partial \Delta.$
Since $|f_j(z)| \to 1$ as $z\to b$,  there is a small open neighborhood $U_b$ of $b$ in $\mathbb C$ such that $|f_j(z)|\to 1$ as $z\to b'$ for any $b'\in U_b\cap \partial\Delta$ by the Open Mapping Theorem. We then note $1-f_j(z)\overline{f_j(b')}$ vanishes to the order equals to $1$ at $b$ if and only if the derivative $f_j'(b') \neq 0$.
By shrinking $U_b$ if necessary, we have for any point $b'\in U_b\cap\partial\Delta,$ $f_j'(b')\neq 0$. Indeed,  otherwise there is a sequence $b_j \rightarrow b$ such that  $f_j'( b_j) = 0$, then $f_j'\equiv 0$. Hence $f_j$ is a constant function, which is a contradiction. The lemma is proved.

\bigskip

We further split {\bf  Case II} into several subcases. Let $b\in \partial\Delta$ be  a unbranch point of $f$.  Note when $z \rightarrow b,$ at least one of $|f_j(z)|$ goes to one.

{\bf Case II(a):} There is a point $b\in \partial\Delta$ which is not a branch point of $f$ such that $|f_j(z)|^2 \to 1$ as $z\to b$ for both $j=1,2$. Then perturbing $b$ if necessary,  we may assume that $f_j'(b)\neq 0$ for both $j=1,2$ by Lemma \ref{lemmapre}.
Then, we have $\lambda_1+\lambda_2=1$ by comparing the vanishing order at $b$ in the polarized equation 
\begin{equation}\label{po}
\left(1-f_1(z)\overline{f_1(b)}\right)^{\lambda_1} \left(1-f_2(z) \overline{f_2(b)}\right)^{\lambda_2} = 1-z \bar b.
\end{equation}
As mentioned above, by the Schwarz lemma, $f(z)=(z,z)$ up to reparametrizations. This is the case (3).

It remains the case where for any point $b\in \partial\Delta$ which is not a branch point of $f$, we have  one of $|f_k(z)|^2 \to 1$ as $z\to b$ and the other of $|f_k(z)|^2 \to r < 1$ as $z \rightarrow b$.

{\bf Case II (b):} There are two distinct points $b_1,b_2\in \partial\Delta$ such that $|f_j(z)|^2 \to 1$ as $z\to b_j$ for $j=1,2$. By Lemma \ref{lemmapre}, replacing $b_j$ with another unbranch point $b_j'\in \partial\Delta$ if necessary, we can assume the vanishing order of  $1-f_j(z)\overline{f_j(b_j)}$ equals one at $b_j$ for $j=1,2$. We compare the vanishing order at $b_j$ to get $\lambda_1=1$ and $\lambda_2=1$ using equation (\ref{po}).
Then $f$ is a holomorphic isometry from $(\Delta,ds_\Delta^2)$ to $(\Delta^2,ds_{\Delta^2}^2)$ and we are done by Ng's classification of the $2$-disk \cite{Ng10} (also cf. \cite{XY16a}). This is the case (4).

{\bf Case II (c):} We now consider the remaining case: It is always the same
$|f_{\sigma(1)}(z)|^2 \to 1$ and the other $|f_{\sigma(2)}(z)|^2$ $\to$ $r(b)<1$ as $z\to b$ for 
any unbranch point $b\in \partial\Delta$ of $f$. Here $\sigma\in S_2$ is a fixed permutation.
As pointed out in Mok \cite{Mok11} that $f$ extends continuously from $\overline\Delta$ to $\overline\Delta^2$. This can be proved by using Puiseux series expansion of each component function of $f$ around each branch point of $f$ and the boundedness of the component functions of $f$ on $\Delta$.
This implies that $|f_{\sigma(1)}(z)|^2 \to 1$ for any $b\in \partial\Delta$ because the set of branch points of $f$ on $\partial\Delta$ is finite and $|f_{\sigma(1)}(z)|^2$ is continuous on $\partial\Delta$.
Therefore, $f_{\sigma(1)}:\Delta\to \Delta$ is a proper holomorphic map and thus a finite Blaschke product. Moreover, we have $\lambda_1=1$.
Suppose $f_{\sigma(1)}$ has a pole at $p \in \mathbb{C}$. Let $w_j \rightarrow p$ such that each $w_j$ is not a branch point of $f_{\sigma(2)}$ and $\lim_{w_j\rightarrow p}f_{\sigma(2)}(w_j)$ equals to $\xi \in \mathbb{C}$ or $\lim_{w_j\rightarrow p}|f_{\sigma(2)}(w_j)| = \infty$. Using equation (\ref{po}), $\lim_{w_j\rightarrow p} \left(1-f_{\sigma(2)}(z)\overline{f_{\sigma(2)}(w_j)} \right)=0$ off finitely many $z \in \mathbb{C}$, implying $f_{\sigma(2)}$ is constant. This is contradictory to the hypotheses. Therefore $f_{\sigma(1)}(z) = c z^k$ for $|c|=1, k \in \mathbb{N}$. It follows from equation (\ref{albert}) that 
\begin{equation}\label{um}
\left( \frac{1}{1-|f_{\sigma(2)}(z)|^2} \right)^{\lambda_{\sigma(2)}} = \frac{1-|z|^{2k}}{1-|z|^2}.
\end{equation}
The right hand side of equation (\ref{um}) is a finite sum of the norm squares of holomorphic functions and the left hand side is an infinite sum of the norm squares of linearly independent holomorphic functions unless $f_{\sigma(2)}$ is constant. This is a contradiction by \cite{Um88} to the assumption that neither of $f_j$ is constant. An alternative way to reach the contradiction is using equation (\ref{um}) to see that for any branch of $f_{\sigma(2)}$ along the holomorphic continuation, $|1-|f_{\sigma(2)}(z)|^2 |\leq 1$ on $\mathbb{C}$ off isolated points. Therefore $f_{\sigma(2)}$ must be constant by  Removable Singularity Theorem and Liouville's Theorem.
\end{proof}

{\bf Acknowledgement} We are grateful to the referee for his/her thorough review and helpful suggestions.

\end{document}